\documentclass[a4paper,11pt,reqno]{amsart}

\textwidth=16cm \textheight=24.5cm
\oddsidemargin=0cm \evensidemargin=0cm
\topmargin=-0.7cm

\usepackage{amsmath}
\usepackage{amsthm}
\usepackage{amssymb}
\usepackage{latexsym}
\usepackage[dvipdfmx]{graphicx,color}
\usepackage{booktabs}

\numberwithin{equation}{section}

\newtheorem{thm}{Theorem}[section]
\newtheorem{prop}[thm]{Proposition}
\newtheorem{lem}[thm]{Lemma}

\theoremstyle{definition}
\newtheorem{defn}[thm]{Definition}

\theoremstyle{remark}
\newtheorem{rem}[thm]{Remark}

\setcounter{tocdepth}{2}
\allowdisplaybreaks[3]

\newcommand{\de}{\delta}

\newcommand{\e}{\varepsilon}

\newcommand{\sgm}{\sigma}
\renewcommand{\th}{\theta}
\newcommand{\om}{\omega}

\newcommand{\p}{\partial}
\newcommand{\I}{\infty}
\newcommand{\Sc}[1]{\mathcal{#1}}
\newcommand{\F}{\Sc{F}}

\newcommand{\Bo}[1]{\mathbb{#1}}
\newcommand{\R}{\Bo{R}}
\newcommand{\T}{\Bo{T}}

\newcommand{\lec}{\lesssim}
\newcommand{\gec}{\gtrsim}

\newcommand{\bbar}{\overline}
\newcommand{\ti}{\widetilde}

\newcommand{\shugo}[1]{\{ #1\}}
\newcommand{\Shugo}[2]{\big\{ \, #1 \, \big| \, #2 \, \big\}}

\newcommand{\LR}[1]{{\langle #1 \rangle }}
\newcommand{\chf}[1]{\mathbf{1}_{#1}}

\newcommand{\norm}[2]{\| #1 \| _{#2}}
\newcommand{\tnorm}[2]{\| #1 \| _{#2}}

\newcommand{\eq}[2]{\begin{equation} \label{#1} \begin{split} #2 \end{split} \end{equation}}
\newcommand{\eqq}[1]{\begin{align*} #1 \end{align*}}
\newcommand{\eqqed}[1]{\begin{equation} \begin{split} #1 \qedhere \end{split} \end{equation}}
\newcommand{\eqs}[1]{\begin{gather*} #1 \end{gather*}}
\newcommand{\mat}[1]{\begin{smallmatrix} #1 \end{smallmatrix}}

\newcommand{\hx}{\hspace{10pt}}

\renewcommand{\H}{\mathcal{H}}

\title[Unconditional uniqueness for periodic mBO]{Unconditional uniqueness for the periodic modified Benjamin-Ono equation by normal form approach}
\author[N. Kishimoto]{Nobu Kishimoto}
\address{Research Institute for Mathematical Sciences, Kyoto University, Kyoto 606-8502, Japan}
\email{nobu@kurims.kyoto-u.ac.jp}

\begin{document}

\begin{abstract}
We show that the solution (in the sense of distribution) to the Cauchy problem with the periodic boundary condition associated with the modified Benjamin-Ono equation
is unique in $L^\I _t(H^s(\T ))$ for $s>1/2$.
The proof is based on the analysis of a normal form equation obtained by infinitely many reduction steps using integration by parts in time after a suitable gauge transform.
\end{abstract}

\maketitle

\section{Introduction}

\smallskip
In this article, we study uniqueness property of solutions to the Cauchy problem associated with the modified Benjamin-Ono equation with the periodic boundary condition:
\eq{mBO}{\p _tu=-\H \p _x^2u+\sgm u^2\p _xu,\qquad (t,x)\in (0,T)\times \T ,}
with initial datum given in Sobolev spaces,
\eq{mBO-id}{u(0,x)=u_0(x)\in H^s(\T ),\qquad x\in \T ,}
where $\T :=\R /2\pi \Bo{Z}$.
The unknown function $u(t,x)$ and the initial datum $u_0(x)$ are assumed to be real valued, and $\sgm \in \shugo{\pm 1}$, with the sign $+1$ (\mbox{resp.} $-1$) corresponding to the focusing (\mbox{resp.} defocusing) case.
$\H$ denotes the periodic Hilbert transform defined by the Fourier multiplier with symbol $-i\,\mathrm{sgn}(n)\chf{n\neq 0}$.

Well-posedness of the Cauchy problem \eqref{mBO}--\eqref{mBO-id} has been extensively studied in both non-periodic and periodic settings; see \cite{I86,KPV94,KK03,MR04a,MR04b,KT06,MR09,G11,GLM14}.
The equation \eqref{mBO} has three conserved quantities (formally):
\eqq{\int _{\T}u(t,x)\,dx,\qquad \int _{\T}u(t,x)^2\,dx,\qquad \int _{\T}\Big\{ \frac{1}{2}u(t,x)\H \p _xu(t,x)-\frac{\sgm}{12}u(t,x)^4\Big\} \,dx.}
Hence, under the assumption that solutions are real-valued, suitable local well-posedness in the energy space $H^{1/2}$ would extend to global result by these conservation laws in the defocusing case and also in the case of focusing sign and small-in-$L^2$ initial data. 
This was achieved by Kenig and Takaoka~\cite{KT06} in the non-periodic case and by Guo et al.~\cite{GLM14} in the periodic case.
Both of them relied crucially on the technique of gauge transform, which was first introduced by Tao~\cite{T04} in the Benjamin-Ono context.
See \cite{G11,Sp} for an approach without performing gauge transform.
The regularity $s=1/2$ was shown to be sharp in the non-periodic case in the sense that the solution map is not locally uniformly continuous in $H^s$ for $s<1/2$ (\cite{KT06}), while local-in-time a priori $H^s$-bound of smooth solutions was obtained for $s>1/4$ (\cite{G11,Sp}).

On the other hand, well-posedness results in \cite{KT06,GLM14} used auxiliary spaces such as $L^p_xL^q_t$-type spaces and Bourgain-type spaces.
Therefore, uniqueness of the solution to the Cauchy problem in such a class as $C([0,T];H^s)$ without intersecting any auxiliary space, which is called \emph{unconditional uniqueness}, can be asked as a natural question.
Unconditional uniqueness of solutions in $H^s$ is obtained for $s>3/2$ both on $\R$ and on $\T$ from the proof of well-posedness in \cite{I86} based on the energy method.
However, to our knowledge, there is no result on unconditional uniqueness for the modified Benjamin-Ono equation below $H^{3/2}$.

In this article, we prove the following unconditional uniqueness result in the periodic case.
\begin{thm}\label{thm}
The solution to the Cauchy problem \eqref{mBO}--\eqref{mBO-id} is unique in $C([0,T];H^s(\T ))$ for $s>1/2$.
\end{thm}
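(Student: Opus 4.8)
The plan is to implement the normal form / infinite iteration of normalizations scheme, adapted to the mBO setting. First I would reduce the problem to a difference estimate: given two distributional solutions $u,v\in C([0,T];H^s)$ with the same data, I want to show $w:=u-v$ vanishes. The raw equation for $w$ contains the cubic-type derivative nonlinearity $\sigma(u^2\p_xu-v^2\p_xv)$, which at regularity $s>1/2$ cannot be closed directly because of the derivative loss. The standard first move is the gauge transform: following Tao, Kenig--Takaoka and Guo--Li--Molinet, I would introduce a gauge factor built from the antiderivative of $u$ (and of $v$) that removes the worst resonant interaction $u^2\p_x u$, replacing it by milder nonlinear terms. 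The gauge must be handled carefully at the level of low regularity and in a way compatible with taking differences; this is itself a source of error terms that need to be controlled in $C([0,T];H^{s'})$ for some $s'\le s$.

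Next, working on the Fourier side with the Duhamel/interaction representation, I would write the nonlinearity of the gauged difference equation as $\sum_{n} e^{it\Phi}\,(\text{multiplier})\,\hat w\,\hat u\,\hat u\,+\cdots$ where $\Phi=\Phi(n_1,n_2,n_3,n)$ is the resonance function for $\p_t + \H\p_x^2$ (cubic), namely a combination like $\Phi \sim n^2\,\mathrm{sgn}(n)-n_1^2\,\mathrm{sgn}(n_1)-\cdots$. On non-resonant frequency regions where $|\Phi|$ is large, I integrate by parts in time (normal form reduction): each integration by parts trades a factor $1/\Phi$ for new boundary terms and new (higher-order, quintic, septic, $\dots$) nonlinear terms under the time integral. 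I would iterate this \emph{infinitely many times}, generating an infinite hierarchy of multilinear terms; the output is a ``normal form equation'' equivalent to the original in which every term is estimable in $C_tH^s$ with a factor $T^\theta$ (or a gain in a high-frequency projection) without auxiliary spaces. The resonant contributions $|\Phi|\lesssim 1$ must be shown to be genuinely mild after the gauge — these are the ones the gauge transform was designed to kill or render subcritical — and the near-resonant pieces where some frequency is comparatively small need separate, more careful treatment (this is where the $s>1/2$ threshold enters, matching the energy-space scaling).

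The core technical content is a family of multilinear estimates: for each term in the infinite normal form hierarchy I need a bound of the form $\|(\text{that term})\|_{C([0,T];H^s)} \lesssim (C T^{\theta})^{k}\,\|u\|^{a}\,\|v\|^{b}\,\|w\|$ with constants $C$ uniform in the iteration depth $k$ and summable in $k$, so the whole series converges and, for $T$ small (then iterating over $[0,T]$), yields $\|w\|_{C([0,\tau];H^s)} \le \tfrac12\|w\|_{C([0,\tau];H^s)}$, forcing $w\equiv 0$. Key ingredients will be: the divisor estimate / lower bounds for $|\Phi|$ when frequencies are separated, counting/$L^2$-orthogonality arguments to absorb the $\langle n\rangle^{s}$ weights, careful bookkeeping of which of the three ``slots'' carries the difference $\hat w$, and a combinatorial estimate showing the number and size of terms produced at depth $k$ grows at most like $C^k$.

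I expect the main obstacle to be precisely controlling the interplay between the gauge transform and the infinite normal form iteration at the \emph{critical-looking} regularity $s>1/2$: the gauge removes the leading resonance but introduces nonlocal factors (exponentials of antiderivatives of $u$) whose low-frequency behaviour must be tracked through every iteration step, and one must verify that no single step — nor the accumulated effect of infinitely many steps — reintroduces a derivative loss or a logarithmically divergent frequency sum. Closing the multilinear estimates uniformly in the iteration depth, with the correct $T$-power to run a contraction, is the heart of the argument; everything else (reduction to the difference $w$, definition of the gauge, the abstract iteration lemma) is comparatively routine once that estimate is in place.
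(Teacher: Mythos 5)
Your outline follows the same strategy as the paper: gauge transform in the spirit of Guo--Lin--Molinet, passage to the interaction representation, infinitely iterated integration by parts in time on the non-resonant set, and a contraction for the difference. As a road map it is accurate. However, what you have written is a plan rather than a proof, and the parts you defer are precisely where the content lies; moreover, two of your structural assumptions would derail a naive execution. First, after the gauge transform the nonlinearity is \emph{not} cubic with a three-frequency resonance function: the gauged equation for $v=e^{-i\sgm F[u]}(P_+u+\tfrac12 P_cu)$ is quintilinear, with three ``active'' factors of $v$ or $\bar v$ and two gauge factors $\F(e^{\pm ikF[u]})$, and the phase is $\Phi=n|n|-n_1|n_1|-n_3|n_3|-n_5|n_5|$, which ignores the gauge frequencies $n_2,n_4$. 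Consequently there are frequency regions (e.g.\ $|\Phi|\le |n_2|^2\vee|n_4|^2$, or configurations where $n_{15}n_{35}=0$) in which integrating by parts gains nothing, and one must instead exploit the extra derivative carried by the gauge factors ($e^{ikF[u]}\in H^{s+1}$). The paper isolates these regions via the exceptional sets $\Sc{A}_1,\Sc{A}_2$ and the splitting $m_i=\hat m_i+\hat{\hat m}_i$, and proves separate estimates \eqref{est:matome-0}--\eqref{est:matome-3} for each regime; nothing in your proposal anticipates this decomposition, and without it the ``resonant contributions are genuinely mild'' claim is unsupported.

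Second, your mechanism for summability in the iteration depth ($(CT^\theta)^k$ per term) is not how the scheme closes. The gain per generation comes from the frequency thresholds $|\ti{\Phi}_j|>2^{j-1}M$ introduced in the resonant/non-resonant splitting, and the key bookkeeping is that only \emph{half} of the available power of $\LR{\Phi_j}$ is spent in the fundamental quintilinear estimate \eqref{est:matome-1}, the other half being converted into the factor $M^{-1/2}$ per generation that beats the factorial growth $\#\mathfrak{U}(J)=7^J\prod_{j=1}^J(2j-1)$ of the number of terms; smallness of $T$ enters only once, at the very end. Establishing \eqref{est:matome-1}--\eqref{est:matome-3} uniformly (via the divisor-counting Lemma~\ref{lem:number} and the case analysis of Lemmas~\ref{lem:1}--\ref{lem:2}) is the heart of the proof and is entirely absent from your proposal. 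Finally, you run the normal form on the difference $w=u-\ti u$ directly; the paper instead derives the limit equation \eqref{eq_om5} for each solution and subtracts, and the return from $\norm{v-\ti v}{}$ to $\norm{u-\ti u}{}$ requires inverting the gauge (Lemma~\ref{lem:est-u}), which is only possible on a short time interval after a high/low frequency splitting — a step your outline does not address.
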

\begin{rem}
The uniqueness assertion also holds in the class $L^\I((0,T);H^s(\T ))$, $s>1/2$.
See \cite[Remark~1.2]{K-BOp} for a detailed argument.
\end{rem}

Our proof of unconditional uniqueness is based on the so-called \emph{normal form reduction} method.
This simple technique of gaining regularity from non-resonant nonlinear interactions by integration by parts in time became recognized as a useful tool to establish unconditional uniqueness for nonlinear dispersive equations; see, e.g., \cite{BIT11,GKO13,KOYp,K-all} and references therein.

It is worth comparing the above result with our previous works~\cite{K-all,K-BOp} on unconditional uniqueness for the periodic derivative nonlinear Schr\"odinger equation
\eq{DNLS}{\p _tu=i\p _x^2u+\p _x(|u|^2u),\qquad u:[0,T]\times \T \to \Bo{C}}
and the periodic Benjamin-Ono equation
\eq{BO}{\p _tu=-\H \p _x^2u+\p _x(u^2),\qquad u:[0,T]\times \T \to \R .}
First, a common idea among these three results is to use a gauge transform to eliminate unfavorable nonlinear interactions (called high-low type) which causes serious derivative loss.
For \eqref{DNLS} the gauge transform is simple and the transformed equation contains the transformed unknown function only, whereas the gauge transforms used for \eqref{BO} and \eqref{mBO} are much more complicated and one cannot remove the original unknown function from the transformed equation.
In fact, for \eqref{DNLS} an analogous result was obtained in the non-periodic case in \cite{MYp}, while for \eqref{BO} and \eqref{mBO} it is seemingly not clear whether the normal form approach has a non-periodic counterpart.

Secondly, the result for \eqref{BO} was proved by analyzing a normal form equation obtained after \emph{finite} (twice, actually) reduction steps, which is a similar situation to the periodic Korteweg-de~Vries equation studied in \cite{BIT11}, whereas \eqref{DNLS} and \eqref{mBO} require \emph{infinite} normal form reduction steps.
In the context of unconditional uniqueness, such infinite implementation of normal form reduction was initiated in~\cite{GKO13}.
In~\cite{K-all} we proposed an abstract framework of this strategy, which allows us to generate all the multilinear estimates of arbitrarily high degree by simply iterating certain \emph{fundamental multilinear estimates} of the lowest degree.
Our proof of Theorem~\ref{thm} basically follows the argument in~\cite{K-all}, but some technical modifications are needed due to complication of the equation after the gauge transform.
Although we estimate only the $C([0,T];H^s)$-norm of solutions and do not use Bourgain-type norms, the basic idea of the proof of fundamental estimates (Proposition~\ref{prop:fundamental} below) is quite similar to that of \cite{GLM14}. 

One may expect that the \emph{infinite} normal form reduction would also yield an improvement of  the result on \eqref{BO}.
However, it seems that the approach in this article based on the strategy in~\cite{K-all} shows unconditional uniqueness for \eqref{BO} in $H^s(\T )$ only for $s>1/2$, which is much worse than the result $s>1/6$ obtained in~\cite{K-BOp}.
The reason is that we only use half of the possible gain of derivative in the fundamental estimate (see the denominator in the estimate \eqref{est:matome-1} in Proposition~\ref{prop:fundamental}), which is however essential for maintaining infinite normal form reduction machinery.
In~\cite{K-BOp} we could only apply the reduction twice, but instead use full of gain of derivative.

The plan of this article is as follows.
The proof begins with applying a suitable gauge transform, which is similar to that of \cite{GLM14} and described in Section~\ref{sec:gauge}.
In Section~\ref{sec:nf}, the infinite normal form reduction scheme is formulated, and we use it to reduce the proof of Theorem~\ref{thm} to showing fundamental quintilinear estimates given in Proposition~\ref{prop:fundamental}.
Finally, we prove these estimates in Section~\ref{sec:proof-multi}. 

\textbf{Notations}.
For a Banach space $X$, we abbreviate $C([0,T];X)$ to $C_TX$.

The Fourier coefficients of a $2\pi$-periodic function $f$ are defined by 
\[ \F f(n):=\frac{1}{2\pi} \int _0^{2\pi}f(x)e^{-inx}\,dx,\qquad n\in \Bo{Z},\]
so that the inverse Fourier transform of a sequence $g=\{ g(n)\}_{n\in \Bo{Z}}$ is given by
\[ \F^{-1}g(x):=\sum _{n\in \Bo{Z}}g(n)e^{inx},\qquad x\in \Bo{T}.\]

For $N>0$, let $P_{\le N}:=\F ^{-1}\chf{|n|\le N}\F$ be the projection onto frequencies $\{ n\in \Bo{Z}:|n|\le N\}$, and $P_{>N}:=1-P_{\le N}$.
We use the notations $P_{\pm}:=\F ^{-1}\chf{\pm n>0}\F$, so that $\H =-iP_++iP_-$, and
\eqs{P_{c}f:=\F ^{-1}\chf{n=0}\F f=\F f(0)=\frac{1}{2\pi}\int _0^{2\pi}f(x)\,dx,\\
P_{\neq c}f:=\F ^{-1}\chf{n\neq 0}\F f=f-\F f(0).
}
For a $2\pi$-periodic function $f(x)$ with zero mean value (\mbox{i.e.} $P_{c}f=0$), define its periodic primitive
\eqq{\p _x^{-1}f(x):=\F ^{-1}\Big[ \frac{1}{in}\F f(n)\Big] (x)=\frac{1}{2\pi}\int _0^{2\pi}\int _{\th}^xf(y)\,dy\,d\th .}
Note that $\bbar{P_{\pm}f}=P_{\mp}\bar{f}$ and $\p _x^{-1}\p _x=\p _x\p _x^{-1}P_{\neq c}=P_{\neq c}$.

We often use the abbreviation $n_{ijk\dots}$ to mean $n_i+n_j+n_k+\cdots $; for instance, $n_{12}=n_1+n_2$ and $n-n_{235}=n-(n_2+n_3+n_5)$.

Finally, we use the weighted $\ell^p$ space; for $s\in \R$ and $1\le p\le \I$,
\[ \ell ^p_s(\Bo{Z}):=\Big\{ \om :\Bo{Z}\to \Bo{C} ~\Big| ~\norm{\om}{\ell ^p_s}:=\norm{\LR{\cdot}^s\om}{\ell ^p}<\infty \Big\} ,\qquad \LR{n}:=(1+n^2)^{1/2}.\]


\bigskip
\section{Gauge transform}\label{sec:gauge}

\smallskip
The mBO equation \eqref{mBO} is transformed into the equation
\eq{mBO'}{\p _tu=-\H \p _x^2u+2\sgm P_{\neq c}(u^2)\p _xu,\qquad (t,x)\in (0,T)\times \T}
by the change of the unknown function
\eqq{u(t,x)\quad \mapsto \quad 2^{-1/2}u\big( t,x-\sgm \int _0^tP_{c}(u(s)^2)\,ds\big) .}
In what follows we consider uniqueness of the solution to the Cauchy problem \eqref{mBO'}--\eqref{mBO-id} in $C_TH^s$, $s>1/2$.
This is sufficient for our purpose, since the above transform is a homeomorphism on $C_TH^s$ for $s\ge 0$.


\smallskip
\subsection{Definition of the gauge transform}

Let $s>1/2$ and $u\in C_TH^s$ be a real-valued solution of \eqref{mBO'} in the sense of distribution.
The nonlinear term $P_{\neq c}(u^2)\p _xu$ makes sense%
\footnote{One can in fact make sense of the nonlinearity for $s\ge 1/6$ by the identity $P_{\neq c}(u^2)\p _xu=\frac{1}{3}\p _x(u^3)-P_{c}(u^2)\p _xu$ and the embedding $H^{1/6}\hookrightarrow L^3$.} 
in $C_TH^{s-1}$ by the estimate
\eq{prod-est1}{\norm{fg}{H^{s-1}}\lec \norm{f}{H^s}\norm{g}{H^{s-1}},\qquad s>1/2,}
which is a special case of the Sobolev multiplication law (see, e.g., \cite[Lemma~3.4]{GLM14}).
In particular, $\p _tu\in C_TH^{s-2}$ and each Fourier coefficient $\F [u(t,\cdot )] (n)$ is a $C^1$ function of $t$ on $[0,T]$.
Testing the equation against constant functions in $x$, we see that the spatial mean of $u$ is independent of $t$:
\eqq{\nu :=P_cu(t)=\frac{1}{2\pi}\int _0^{2\pi}u(t,x)\,dx\equiv P_cu(0),\qquad t\in [0,T].}

We define the gauge transform as follows:
\begin{defn}
Let $F[f]:=\p _x^{-1}P_{\neq c}(f^2)$ for $f\in L^2(\T )$.
For $s>1/2$ and a real-valued function $u\in C_TH^s$, define its gauge transform $v$ by
\eq{utov}{v(t,x):=e^{-i\sgm F[u(t)](x)}\Big( \big[ P_+u(t)\big] (x)+\frac{1}{2}P_cu(t)\Big) .}
\end{defn}
The above definition is slightly different from that used in \cite{GLM14}, though it is based on the same idea.
Note that
\eq{u_via_v}{P_+u+\tfrac{1}{2}P_cu=e^{i\sgm F[u]}v,&\qquad u=e^{i\sgm F[u]}v+e^{-i\sgm F[u]}\bar{v},\\
\p _xP_+u=\p _xP_+\big( e^{i\sgm F[u]}v\big) ,&\qquad \p _xP_-u=\p _xP_-\big( e^{-i\sgm F[u]}\bar{v}\big) .}

The following two lemmas were essentially shown in \cite{GLM14}, but we give them with proofs.
First, we see that the gauge $e^{-iF[u]}$ has one higher regularity than the solution $u$:
\begin{lem}\label{lem:exp}
Let $1/2<s\le 1$.
For real-valued $f\in H^s$ and $k\in \R$, we have $e^{ikF[f]}\in H^{s+1}$.
Moreover, for real-valued $u,\ti{u}\in C_TH^s$ let
\eqq{X_s(u):=\max _{k\in \shugo{\pm 1,\pm 3}}\norm{e^{ikF[u]}}{C_TH^s},\qquad
Y_s(u,\ti{u}):=\max _{k\in \shugo{\pm 1,\pm 3}}\norm{e^{ikF[u]}-e^{ikF[\ti{u}]}}{C_TH^s},
}
then the following estimates hold:
\eqq{X_s(u)&\lec 1+\tnorm{u}{C_TH^s}^2,\\
X_{s+1}(u)&\lec 1+\tnorm{u}{C_TH^s}^4,\\
Y_s(u,\ti{u})&\lec (1+\tnorm{u}{C_TH^s}^3+\tnorm{\ti{u}}{C_TH^s}^3)\tnorm{u-\ti{u}}{C_TH^s},\\
Y_{s+1}(u,\ti{u})&\lec (1+\tnorm{u}{C_TH^s}^5+\tnorm{\ti{u}}{C_TH^s}^5)\tnorm{u-\ti{u}}{C_TH^s}.
}
In particular, the gauge transform \eqref{utov} is locally Lipschitz continuous from $C_TH^s(\T ;\R)$ into $C_TH^s(\T ;\Bo{C})$ with estimates
\eqq{\norm{v}{C_TH^s}&\lec  (1+\norm{u}{C_TH^s}^2)\norm{u}{C_TH^s},\\
\norm{v-\ti{v}}{C_TH^s}&\lec  (1+\tnorm{u}{C_TH^s}^4+\tnorm{\ti{u}}{C_TH^s}^4)\tnorm{u-\ti{u}}{C_TH^s}.}
\end{lem}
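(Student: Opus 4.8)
The plan is to derive the whole lemma from three elementary facts: the Sobolev multiplication law $\norm{fg}{H^\sigma}\lec\norm{f}{H^\sigma}\norm{g}{H^\sigma}$ for $\sigma>1/2$ (and its variant \eqref{prod-est1}); the smoothing property $\norm{\p _x^{-1}g}{H^{\sigma+1}}\lec\norm{g}{H^\sigma}$ for mean-zero $g$; and the fact that $\p _x^{-1}$ preserves reality, so that $F[f]$ is real-valued and $|e^{ikF[f]}|\equiv 1$ pointwise. From the first two we get at once (all bounds uniform in $t\in[0,T]$) $\tnorm{F[u]}{C_TH^{s+1}}\lec\tnorm{u}{C_TH^s}^2$ and, writing $u^2-\ti u^2=(u-\ti u)(u+\ti u)$, $\tnorm{F[u]-F[\ti u]}{C_TH^{s+1}}\lec(\tnorm{u}{C_TH^s}+\tnorm{\ti u}{C_TH^s})\tnorm{u-\ti u}{C_TH^s}$. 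The key sub-lemma is: \emph{if $G\in H^{s+1}$ is real-valued and $k\in\R$, then $e^{ikG}\in H^s$ with $\norm{e^{ikG}}{H^s}\lec 1+|k|\norm{G}{H^{s+1}}$; if moreover $\p _xG\in H^s$, then $e^{ikG}\in H^{s+1}$ with $\norm{e^{ikG}}{H^{s+1}}\lec(1+|k|\norm{\p _xG}{H^s})\norm{e^{ikG}}{H^s}$.} For the first part, $e^{ikG}\in H^1$ with $\p _x(e^{ikG})=ik(\p _xG)e^{ikG}$, so $\norm{e^{ikG}}{H^1}^2=2\pi+k^2\norm{(\p _xG)e^{ikG}}{L^2}^2\le 2\pi+k^2\norm{\p _xG}{L^2}^2\norm{e^{ikG}}{L^\I}^2\lec 1+k^2\norm{G}{H^{s+1}}^2$, and $H^1\hookrightarrow H^s$ since $s\le 1$; for the second part, $\p _x(e^{ikG})=ik(\p _xG)e^{ikG}$ now lies in $H^s$ by the algebra property, and one uses $\norm{g}{H^{s+1}}^2=\norm{g}{H^s}^2+\norm{\p _xg}{H^s}^2$. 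Applying the sub-lemma with $G=F[u(t)]$ (so $\p _xG=P_{\neq c}(u(t)^2)$) yields $X_s(u)\lec 1+\tnorm{u}{C_TH^s}^2$ and $X_{s+1}(u)\lec(1+\tnorm{u}{C_TH^s}^2)^2\lec 1+\tnorm{u}{C_TH^s}^4$ for $k\in\shugo{\pm 1,\pm 3}$.

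\textbf{Difference estimates.} For $Y_s$ and $Y_{s+1}$ I would use the identity $e^{ia}-e^{ib}=i(a-b)\int _0^1e^{i(b+\th(a-b))}\,d\th$ with $a=kF[u](t,x)$, $b=kF[\ti u](t,x)$, which gives $e^{ikF[u]}-e^{ikF[\ti u]}=ik(F[u]-F[\ti u])\int _0^1e^{ikG_\th}\,d\th$ with $G_\th:=(1-\th)F[\ti u]+\th F[u]$. The point is that each $G_\th$ is again real-valued and $\p _xG_\th=(1-\th)P_{\neq c}(\ti u^2)+\th P_{\neq c}(u^2)$, so the sub-lemma applies verbatim, giving $\norm{e^{ikG_\th}}{H^s}\lec 1+|k|(\norm{u}{H^s}^2+\norm{\ti u}{H^s}^2)$ and $\norm{e^{ikG_\th}}{H^{s+1}}\lec(1+|k|(\norm{u}{H^s}^2+\norm{\ti u}{H^s}^2))^2$ uniformly in $\th\in[0,1]$. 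Combining these with the bound for $\tnorm{F[u]-F[\ti u]}{C_TH^{s+1}}$ via Minkowski's inequality and the algebra property of $H^s$ (resp.\ $H^{s+1}$), and then tidying up powers through $(\norm{u}{H^s}+\norm{\ti u}{H^s})^m\lec\norm{u}{H^s}^m+\norm{\ti u}{H^s}^m$ and Young's inequality, one obtains $Y_s(u,\ti u)\lec(1+\tnorm{u}{C_TH^s}^3+\tnorm{\ti u}{C_TH^s}^3)\tnorm{u-\ti u}{C_TH^s}$ and $Y_{s+1}(u,\ti u)\lec(1+\tnorm{u}{C_TH^s}^5+\tnorm{\ti u}{C_TH^s}^5)\tnorm{u-\ti u}{C_TH^s}$.

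\textbf{The gauge transform, and the main obstacle.} The bounds on $v$ then drop out: $\norm{v}{C_TH^s}\lec\norm{e^{-i\sgm F[u]}}{C_TH^s}\norm{P_+u+\tfrac12P_cu}{C_TH^s}\lec X_s(u)\tnorm{u}{C_TH^s}$, and, writing $v-\ti v=(e^{-i\sgm F[u]}-e^{-i\sgm F[\ti u]})(P_+u+\tfrac12P_cu)+e^{-i\sgm F[\ti u]}(P_+(u-\ti u)+\tfrac12P_c(u-\ti u))$, $\norm{v-\ti v}{C_TH^s}\lec Y_s(u,\ti u)\tnorm{u}{C_TH^s}+X_s(\ti u)\tnorm{u-\ti u}{C_TH^s}$, again by the algebra property. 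None of these steps is deep; the two points that need care are (a) keeping every product within the algebra range $H^\sigma$, $\sigma>1/2$ --- which forces the order ``first get $e^{ikG}\in H^s$, then feed it into $\p _x(e^{ikG})$ to reach $H^{s+1}$'' rather than attacking $H^{s+1}$ directly --- and (b) tracking the powers of $\tnorm{u}{C_TH^s}$ so that the exponents $2,4$ (for $X$) and $3,5$ (for $Y$) come out exactly as stated. The hypothesis $s\le 1$ enters only through $H^1\hookrightarrow H^s$ in the sub-lemma.
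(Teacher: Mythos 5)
Your proposal is correct and follows essentially the same route as the paper: bound $e^{ikF}$ first in $H^1\hookrightarrow H^s$ using $|e^{ikF}|\equiv 1$ and $\p_xF[f]=P_{\neq c}(f^2)$, then bootstrap to $H^{s+1}$ through $\p_x e^{ikF}=ike^{ikF}P_{\neq c}(f^2)$ and the $H^s$ algebra property, and deduce the $v$-estimates from the product structure of \eqref{utov}. The only cosmetic difference is in the Lipschitz bounds, where you use the mean-value representation $e^{ia}-e^{ib}=i(a-b)\int_0^1e^{i(b+\th(a-b))}\,d\th$ while the paper uses the pointwise bound $|e^{ia}-e^{ib}|\le|a-b|$ together with a direct splitting of the difference of products; both yield the stated powers.
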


\begin{proof}
We have
\eqq{\norm{e^{ikF[f]}}{H^1}&\lec \norm{e^{ikF[f]}}{L^2}+\norm{ke^{ikF[f]}P_{\neq c}(f^2)}{L^2}\lec _k1+\norm{f}{H^s}^2,\\
\norm{e^{ikF[f]}-e^{ikF[g]}}{H^1}&\lec \norm{e^{ikF[f]}-e^{ikF[g]}}{L^2}+\norm{ke^{ikF[f]}P_{\neq c}(f^2)-ke^{ikF[g]}P_{\neq c}(g^2)}{L^2}\\
&\lec _k\norm{F[f]-F[g]}{L^2}\big( 1+\norm{f^2}{L^\I}\big) +\norm{f^2-g^2}{L^2}\\
&\lec \big( 1+\norm{f}{H^s}^3+\norm{g}{H^s}^3\big) \norm{f-g}{H^s},}
and thus
\eqqed{\norm{e^{ikF[f]}}{H^{s+1}}&\lec 1+\norm{ke^{ikF[f]}P_{\neq c}(f^2)}{H^s}\lec _k1+(1+\norm{f}{H^s}^2)\norm{f}{H^s}^2\lec 1+\norm{f}{H^s}^4,\\
\norm{e^{ikF[f]}-e^{ikF[g]}}{H^{s+1}}&\lec \norm{e^{ikF[f]}-e^{ikF[g]}}{L^2}+\norm{ke^{ikF[f]}P_{\neq c}(f^2)-ke^{ikF[g]}P_{\neq c}(g^2)}{H^s}\\
&\lec _k\big( 1+\norm{f}{H^s}^5+\norm{g}{H^s}^5\big) \norm{f-g}{H^s}.}
\end{proof}

Using higher regularity of the gauge part shown above, we can `invert' the gauge transform on solutions of \eqref{mBO'} for a short time:
\begin{lem}\label{lem:est-u}
Let $1/2<s\le 1$, $u,\ti{u}\in C_TH^s$ be real-valued solutions of \eqref{mBO'} and $v,\ti{v}$ be the corresponding gauge transforms defined by \eqref{utov}.
Then, there exist $C>0$ depending on $s, \norm{u}{C_TH^s}, \norm{\ti{u}}{C_TH^s}$ and $T_0\in (0,T]$ depending also on the profile of $v$ in $C_TH^s$, such that
\eqq{\norm{u-\ti{u}}{C_{T'}H^s}\le C\big( \norm{u(0)-\ti{u}(0)}{H^s}+\norm{v-\ti{v}}{C_{T'}H^s}\big)}
for any $T'\in (0,T_0]$.
\end{lem}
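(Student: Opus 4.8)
The plan is to recover $u$ from $v$ through the algebraic identity \eqref{u_via_v} and to close a difference estimate on a short time interval, using the equation \eqref{mBO'} only through a crude bound on $\p _t(u-\ti u)$ in a weaker Sobolev norm.

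Set $w:=u-\ti u$, which is real-valued. From the relation $P_+u+\tfrac12 P_cu=e^{i\sgm F[u]}v$ in \eqref{u_via_v} (and its analogue for $\ti u$) we get $P_+w+\tfrac12 P_cw=A$ with $A:=e^{i\sgm F[u]}v-e^{i\sgm F[\ti u]}\ti v$; since $w$ is real and $F[\cdot ]$ is real-valued, conjugation gives $P_-w+\tfrac12 P_cw=\bar A$, so $w=A+\bar A$ and $\norm{w(t)}{H^s}\le 2\norm{A(t)}{H^s}$. Write $A=(e^{i\sgm F[u]}-e^{i\sgm F[\ti u]})v+e^{i\sgm F[\ti u]}(v-\ti v)$. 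The second term is harmless: by Lemma~\ref{lem:exp}, $e^{i\sgm F[\ti u]}\in H^{s+1}\hookrightarrow L^\I$ with norm $\lec 1+\tnorm{\ti u}{C_TH^s}^4$, so (using that $H^{s'}$ is an algebra for $s'>1/2$ and that $H^{s+1}$ multiplies $H^s$ into $H^s$) one gets $\norm{e^{i\sgm F[\ti u]}(v-\ti v)}{H^s}\lec (1+\tnorm{\ti u}{C_TH^s}^4)\tnorm{v-\ti v}{C_{T'}H^s}$.

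The real issue is the first term $(e^{i\sgm F[u]}-e^{i\sgm F[\ti u]})v$. Estimating it directly by the bound on $Y_{s+1}(u,\ti u)$ from Lemma~\ref{lem:exp} would produce $\tnorm{w}{C_{T'}H^s}\tnorm{v}{C_TH^s}$ with a constant that does \emph{not} shrink as $T'\to 0$ (since $\tnorm{v}{C_{T'}H^s}\ge\norm{v(0)}{H^s}$), so the estimate would be circular. To break the circularity I would write
\eqq{e^{i\sgm F[u]}-e^{i\sgm F[\ti u]}=i\sgm \big( F[u]-F[\ti u]\big) \int _0^1e^{i\sgm \th F[u]+i\sgm (1-\th )F[\ti u]}\,d\th ,}
where the integral factor lies in $H^{s+1}\hookrightarrow L^\I$ with norm a polynomial in $\tnorm{u}{C_TH^s},\tnorm{\ti u}{C_TH^s}$ (the Moser-type argument in the proof of Lemma~\ref{lem:exp}), and then decompose the gauge difference itself, using $F[u]-F[\ti u]=\p _x^{-1}P_{\neq c}\big( w\,(u+\ti u)\big)$ and the three-term split
\eqq{w(t)(u+\ti u)(t)=w(0)(u+\ti u)(0)+\big( w(t)-w(0)\big) (u+\ti u)(t)+w(0)\big( (u+\ti u)(t)-(u+\ti u)(0)\big) .}
The first term gives $\p _x^{-1}P_{\neq c}[w(0)(u(0)+\ti u(0))]\in H^{s+1}$, of norm $\lec C(\tnorm{u}{C_TH^s},\tnorm{\ti u}{C_TH^s})\norm{w(0)}{H^s}$, a data contribution. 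For the other two terms the point is that each carries a factor $g(t)-g(0)$ with $g$ one of the solution-built quantities $w$ and $u+\ti u$: from \eqref{mBO'} and \eqref{prod-est1}, $\p _tg\in C_TH^{s-2}$ (here $\H \p _x^2$ may cost two derivatives, while the nonlinearity costs only one), hence $\norm{g(t)-g(0)}{H^{s-2}}\le t\,\norm{\p _tg}{C_{T'}H^{s-2}}$, and interpolating with $\norm{g(t)-g(0)}{H^s}\le 2\tnorm{g}{C_{T'}H^s}$ yields a gain $\norm{g(t)-g(0)}{H^{s'}}\lec (T')^{\th _0}$ for any $s'\in (s-2,s)$, $\th _0:=(s-s')/2$ — with an extra factor $\tnorm{w}{C_{T'}H^s}$ present when $g=w$, since $\norm{\p _tw}{C_{T'}H^{s-2}}\lec C(\tnorm{u}{C_TH^s},\tnorm{\ti u}{C_TH^s})\tnorm{w}{C_{T'}H^s}$. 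Fixing $s'$ slightly below $s$ (so $s'>1/2$ and $s'+1\ge s$), estimating these two terms in $H^{s'}$ by the $H^{s'}$-algebra, gaining a derivative by $\p _x^{-1}P_{\neq c}$, and multiplying back by $v\in H^s$ and the bounded integral factor, one bounds $(e^{i\sgm F[u]}-e^{i\sgm F[\ti u]})v$ in $H^s$ by $C_*\big[ (T')^{\th _0}\tnorm{w}{C_{T'}H^s}+\norm{w(0)}{H^s}\big]$, with $C_*=C(s,\tnorm{u}{C_TH^s},\tnorm{\ti u}{C_TH^s},\tnorm{v}{C_TH^s})$ non-decreasing in its arguments.

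Collecting the contributions and recalling $w(0)=u(0)-\ti u(0)$ gives, for all $T'\in (0,T]$,
\eqq{\tnorm{u-\ti u}{C_{T'}H^s}\le C_*\big[ (T')^{\th _0}\tnorm{u-\ti u}{C_{T'}H^s}+\norm{u(0)-\ti u(0)}{H^s}+\tnorm{v-\ti v}{C_{T'}H^s}\big] .}
It then suffices to fix $T_0\in (0,T]$ with $C_*T_0^{\th _0}\le\tfrac12$ — this is where $T_0$ comes to depend on the $C_TH^s$-profile of $v$, through $\tnorm{v}{C_TH^s}$ — and to absorb the first term on the right for $T'\le T_0$, obtaining the claim with $C=2C_*$. \textbf{Main obstacle.} It is exactly the circularity above: the only route I see for $(e^{i\sgm F[u]}-e^{i\sgm F[\ti u]})v$ is to extract the vanishing prefactor $(T')^{\th _0}$ in front of $\tnorm{u-\ti u}{C_{T'}H^s}$ at the price of dropping to $H^{s'}$, $s'<s$, invoking \eqref{mBO'} only via the two-derivative-losing bound on $\p _t(u-\ti u)$ plus interpolation; no finer structure of the equation (the gauge transform of the next sections, the normal form reduction) is needed for this lemma. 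The only delicate bookkeeping is to split $F[u]-F[\ti u]$ so that every resulting term keeps either a factor $g(t)-g(0)$ (which supplies $(T')^{\th _0}$) or the data difference $w(0)$, with no stray leftover term — which is the purpose of the three-term decomposition above.
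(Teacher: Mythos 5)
Your proposal is correct, and it follows a genuinely different route from the paper's proof. Both arguments rest on the same two structural inputs — the identity $P_+u+\tfrac12P_cu=e^{i\sgm F[u]}v$ from \eqref{u_via_v} and the extra derivative carried by the gauge factor (the $H^{s+1}$ bounds of Lemma~\ref{lem:exp}) — and both must break the same circularity in the term $(e^{i\sgm F[u]}-e^{i\sgm F[\ti{u}]})v$. The paper does this by a frequency splitting at a large parameter $N$: the low frequencies $P_{\le 2N}(u-\ti{u})$ are estimated through the truncated Duhamel formula at a cost $TN$, while in the high frequencies the gauge difference either carries the large frequency (gaining $N^{-1}$ from $Y_{s+1}$) or $v$ does (producing the tail $\norm{P_{>N}v}{C_TH^s}$); one first fixes $N$ to make the $N^{-1}$ and tail contributions small and then shrinks $T_0$, which is why the paper's $T_0$ depends on the profile of $v$ and not just its norm. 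You instead recover $u-\ti{u}$ globally in frequency as $A+\bbar{A}$ and extract smallness in time directly, by writing $F[u]-F[\ti{u}]=\p_x^{-1}P_{\neq c}(w(u+\ti{u}))$, splitting around $t=0$, and using that $\p_t w$ is controlled in $C_TH^{s-2}$ linearly in $\tnorm{w}{C_{T'}H^s}$ so that interpolation between $H^{s-2}$ and $H^s$ yields the prefactor $(T')^{\th_0}$; all the product estimates you invoke ($H^{s'}$ algebra for $s'>1/2$, $H^{s'+1}\times H^s\to H^s$) are legitimate instances of the Sobolev multiplication law already used in the paper. Your version buys a slightly stronger conclusion — $T_0$ depends only on the norms $\norm{u}{C_TH^s},\norm{\ti{u}}{C_TH^s}$ (hence on $\norm{v}{C_TH^s}$), with no reference to the profile of $v$ — at the price of an interpolation step and a more delicate bookkeeping of which factors carry $g(t)-g(0)$ versus the data $w(0)$; the paper's version avoids interpolation entirely and works only with the $C_TH^s$ and $C_TH^{s+1}$ bounds of Lemma~\ref{lem:exp} together with the frequency-truncated integral equation.
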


\begin{proof}
With a constant $N\gg 1$ to be chosen later, we first divide the norm as
\eqq{\norm{u-\ti{u}}{C_TH^s}\le \norm{P_{\le 2N}\big( u-\ti{u}\big)}{C_TH^s}+\norm{P_{>2N}\big( u-\ti{u}\big)}{C_TH^s}.}
Since $P_{\le 2N}u$ and $P_{\le 2N}\ti{u}$ are smooth and satisfy the integral equation
\eqq{P_{\le 2N}u(t)=P_{\le 2N}e^{-t\H \p _x^2}u(0)+2\int _0^te^{-(t-t')\H \p _x^2}P_{\le 2N}\big[ P_{\neq c}(u(t')^2)\p _xu(t')\big] \,dt'}
in the classical sense, we get
\eqq{\norm{P_{\le 2N}\big( u-\ti{u}\big)}{C_TH^s}&\le \norm{u(0)-\ti{u}(0)}{H^s}+2\int _0^T\norm{P_{\le 2N}\big[ P_{\neq c}(u^2)\p _xu-P_{\neq c}(\ti{u}^2)\p _x\ti{u}\big] (t)}{H^s}\,dt\\
&\le \norm{u(0)-\ti{u}(0)}{H^s}+CTN\norm{P_{\neq c}(u^2)\p _xu-P_{\neq c}(\ti{u}^2)\p _x\ti{u}}{C_TH^{s-1}}\\
&\le \norm{u(0)-\ti{u}(0)}{H^s}+CTN\big( \tnorm{u}{C_TH^s}^2+\tnorm{\ti{u}}{C_TH^s}^2\big) \tnorm{u-\ti{u}}{C_TH^s},}
where at the last step we have used the product estimate \eqref{prod-est1}.
For the estimate in high frequencies, we use the first identity in \eqref{u_via_v} and obtain
\eqq{&\norm{P_{>2N}\big( u-\ti{u}\big)}{C_TH^s}\le 2\norm{P_{>2N}P_+\big( u-\ti{u}\big)}{C_TH^s}=2\norm{P_{>2N}\big( e^{i\sgm F[u]}v-e^{i\sgm F[\ti{u}]}\ti{v}\big)}{C_TH^s}\\
&\le 2\norm{P_{>N}(e^{i\sgm F[u]}-e^{i\sgm F[\ti{u}]})\cdot v}{C_TH^s}+2\norm{P_{\le N}(e^{i\sgm F[u]}-e^{i\sgm F[\ti{u}]})\cdot P_{>N}v}{C_TH^s}\\
&\hx +2\norm{e^{i\sgm F[\ti{u}]}(v-\ti{v})}{C_TH^s}\\
&\le C\Big( N^{-1}Y_{s+1}(u,\ti{u})\norm{v}{C_TH^s}+Y_s(u,\ti{u})\norm{P_{>N}v}{C_TH^s}+X_s(\ti{u})\norm{v-\ti{v}}{C_TH^s}\Big) .}
Using Lemma~\ref{lem:exp}, we have
\eqq{\norm{u-\ti{u}}{C_TH^s}&\le \norm{u(0)-\ti{u}(0)}{H^s}+C\LR{\norm{\ti{u}}{C_TH^s}}^2\norm{v-\ti{v}}{C_TH^s}\\[-5pt]
&\hx +C\Big\{ TN(\norm{u}{C_TH^s}+\norm{\ti{u}}{C_TH^s})^2+N^{-1}\LR{\norm{u}{C_TH^s}+\norm{\ti{u}}{C_TH^s}}^8\\[-5pt]
&\hx \hspace{25pt} +\LR{\norm{u}{C_TH^s}+\norm{\ti{u}}{C_TH^s}}^3\norm{P_{>N}v}{C_TH^s}\Big\} \tnorm{u-\ti{u}}{C_TH^s}.}
Note that $\norm{P_{>N}v}{C_TH^s}\to 0$ as $N\to \I$.
We take $N\gg 1$ so that
\eqq{C\Big\{ N^{-1}\LR{\norm{u}{C_TH^s}+\norm{\ti{u}}{C_TH^s}}^8+\LR{\norm{u}{C_TH^s}+\norm{\ti{u}}{C_TH^s}}^3\norm{P_{>N}v}{C_TH^s}\Big\} \le 1/3,}
and then take $0<T_0\le T$ so that 
\eqq{CT_0N(\norm{u}{C_TH^s}+\norm{\ti{u}}{C_TH^s})^2\le 1/3.}
Repeating the above argument with $T$ replaced by any $T'\in (0,T_0]$, we obtain 
\eqq{\norm{u-\ti{u}}{C_{T'}H^s}&\le 3\big( \norm{u(0)-\ti{u}(0)}{H^s}+C\LR{\norm{\ti{u}}{C_{T'}H^s}}^2\norm{v-\ti{v}}{C_{T'}H^s}\big) .\qedhere}
\end{proof}


\smallskip
\subsection{Equation after the gauge transform}

In view of Lemma~\ref{lem:est-u}, it suffices to show uniqueness for the equation satisfied by the gauge transform $v$ of the solution $u$ of \eqref{mBO'}.
It is easy to derive the equation for $v$ \emph{formally}, assuming that the solution $u$ of \eqref{mBO'} is sufficiently smooth.
However, in order to show unconditional uniqueness we need to consider general solutions of \eqref{mBO'} which may not be approximated by smooth solutions.
Hence, given a solution $u\in C_TH^s$ of \eqref{mBO'}, we first set
\eqq{v_N(t,x):=e^{-i\sgm F_N(t,x)}\Big( \big[ P_+u_N(t)\big] (x)+\frac{\nu}{2}\Big) ,\qquad F_N:=F[u_N],\quad u_N:=P_{\le N}u}
and consider the equation for $v_N$.
(Recall that $\nu =P_cu(t)$ is conserved.)
Observing that the equalities
\eqq{\p _tu_N&=-\H \p _x^2u_N+2\sgm P_{\neq c}(u_N^2)\p _xu_N+G_N,\\
-i\sgm \p _tF_N&=-i\sgm \p _x^{-1}P_{\neq c}\Big( 2u_N\big( -\H \p _x^2u_N+2\sgm P_{\neq c}(u_N^2)\p _xu_N+G_N\big) \Big) \\
&=2\sgm P_{\neq c}(u_Ni\H \p _xu_N)-2\sgm B(u_N,u_N)-iP_{\neq c}\big[ \big( P_{\neq c}(u_N^2)\big) ^2\big] -2i\sgm \p _x^{-1}P_{\neq c}(u_NG_N),\\
\p _xF_N&=P_{\neq c}(u_N^2),\qquad \p _x^2F_N=2u_N\p _xu_N}
hold in the classical sense, where
\eqq{G_N&:=2\sgm P_{\le N}\big( P_{\neq c}(u^2)\p _xu\big) -2\sgm P_{\neq c}(u_N^2)\p _xu_N,\\
B(f,g)&:=\p _x^{-1}\big( (P_+\p _xf)(P_+\p _xg)-(P_-\p _xf)(P_-\p _xg)\big) ,}
we see that
\eqq{&(\p _t+\H \p _x^2)v_N=e^{-i\sgm F_N}\Big( -i\sgm \p _tF_N\cdot \big( P_+u_N+\tfrac{\nu}{2}\big) +P_+\p _tu_N\Big) 
\\
&\hx\hx +\H \Big[ e^{-i\sgm F_N}\Big( \big( -i\sgm \p _x^2F_N-(\p _xF_N)^2\big) \big(P_+u_N+\tfrac{\nu}{2}\big) -2i\sgm \p _xF_N\cdot P_+\p _xu_N+P_+\p _x^2u_N\Big) \Big] \\
&\hx =e^{-i\sgm F_N}\Big( 2\sgm P_{\neq c}(u_Ni\H \p _xu_N)-2\sgm B(u_N,u_N)-iP_{\neq c}\big[ \big( P_{\neq c}(u_N^2)\big) ^2\big] -2i\sgm \p _x^{-1}P_{\neq c}(u_NG_N)\Big) \\
&\hspace{350pt} \times \big( P_+u_N+\tfrac{\nu}{2}\big) \\
&\hx\hx +e^{-i\sgm F_N}\Big( iP_+\p _x^2u_N+2\sgm P_+\big( P_{\neq c}(u_N^2)\p _xu_N\big) +P_+G_N\Big) \\
&\hx\hx +i\H \Big[ e^{-i\sgm F_N}\Big\{ \Big( -2\sgm u_N\p _xu_N+i\big( P_{\neq c}(u_N^2)\big) ^2\Big) \big(P_+u_N+\tfrac{\nu}{2}\big) \\
&\hspace{250pt} -2\sgm P_{\neq c}(u_N^2)P_+\p _xu_N-iP_+\p _x^2u_N\Big\} \Big] \\
&\hx =2\sgm I_1+2\sgm I_2+I_3 +I_4,
}
\eqq{I_1&:=e^{-i\sgm F_N}u_N\big( P_+u_N+\tfrac{\nu}{2}\big) \big( P_+-P_-\big) \p _xu_N-(P_+-P_-)\Big[ e^{-i\sgm F_N}u_N\big( P_+u_N+\tfrac{\nu}{2}\big) \p _xu_N\Big] ,\\
I_2&:=e^{-i\sgm F_N}P_+\big( P_{\neq c}(u_N^2)\p _xu_N\big) -(P_+-P_-)\Big[ e^{-i\sgm F_N}P_{\neq c}(u_N^2)P_+\p _xu_N\Big] ,\\
I_3&:=ie^{-i\sgm F_N}P_+\p _x^2u_N-i(P_+-P_-)\Big[ e^{-i\sgm F_N}P_+\p _x^2u_N\Big] ,\\
I_4&:=e^{-i\sgm F_N}\Big( -2\sgm B(u_N,u_N)-iP_{\neq c}\big[ \big( P_{\neq c}(u_N^2)\big) ^2\big] -2i\sgm \p _x^{-1}P_{\neq c}(u_NG_N)\Big) \big( P_+u_N+\tfrac{\nu}{2}\big) \\
&\hx +e^{-i\sgm F_N}P_+G_N-\H \Big[ e^{-i\sgm F_N}\big( P_{\neq c}(u_N^2)\big) ^2\big( P_+u_N+\tfrac{\nu}{2}\big) \Big] \\
&\hx -2\sgm P_{c}\big( u_Ni\H \p _xu_N\big) e^{-i\sgm F_N}\big( P_+u_N+\tfrac{\nu}{2}\big) .
}
For $I_1$, $I_2$, and $I_3$, we have 
\eqq{I_1&=-2P_+\Big[ e^{-i\sgm F_N}u_N\big( P_+u_N+\tfrac{\nu}{2}\big) \p _xP_-u_N\Big] +2P_-\Big[ e^{-i\sgm F_N}u_N\big( P_+u_N+\tfrac{\nu}{2}\big) \p _xP_+u_N\Big] \\
&\hx +P_{c}\Big[ e^{-i\sgm F_N}u_N\big( P_+u_N+\tfrac{\nu}{2}\big) i\H \p _xu_N\Big] ,\\
I_2&=(P_c+P_-)\Big[ e^{-i\sgm F_N}P_+\big( P_{\neq c}(u_N^2)\p _xu_N\big) \Big] -P_+\Big[ e^{-i\sgm F_N}(P_c+P_-)\big( P_{\neq c}(u_N^2)\p _xu_N\big) \Big] \\
&\hx +P_+\Big[ e^{-i\sgm F_N}u_N^2\p _xP_-u_N\Big] +P_-\Big[ e^{-i\sgm F_N}u_N^2\p _xP_+u_N\Big] \\
&\hx -P_c(u_N^2)\Big( P_+\big[ e^{-i\sgm F_N}\p _xP_-u_N\big] +P_-\big[ e^{-i\sgm F_N}\p _xP_+u_N\big] \Big) ,\\
I_3&=iP_c\big[ e^{-i\sgm F_N}P_+\p _x^2u_N\big] +2iP_-\big[ e^{-i\sgm F_N}P_+\p _x^2u_N\big] \\
&=iP_c\big[ e^{-i\sgm F_N}P_+\p _x^2u_N\big] +2\sgm P_-\big[ \p _x^{-1}P_-\big( e^{-i\sgm F_N}u_N^2\big) \cdot \p _x^2P_+u_N\big] \\
&\hx -2\sgm P_{c}(u_N^2)P_-\big[ \p _x^{-1}P_-\big( e^{-i\sgm F_N}\big) \cdot \p _x^2P_+u_N\big] \\
&=2\sgm \p _xP_-\big[ \p _x^{-1}P_-\big( e^{-i\sgm F_N}u_N^2\big) \cdot \p _xP_+u_N\big] -2\sgm P_-\big[ e^{-i\sgm F_N}u_N^2\p _xP_+u_N\big] \\
&\hx +iP_c\big[ e^{-i\sgm F_N}P_+\p _x^2u_N\big] -2\sgm P_{c}(u_N^2)P_-\big[ \p _x^{-1}P_-\big( e^{-i\sgm F_N}\big) \cdot \p _x^2P_+u_N\big] .
}
Combining these identities, we have 
\eq{eq:v_N}{(\p _t\!+\!\H \p _x^2)v_N&=-4\sgm P_+\Big[ e^{-i\sgm F_N}u_N\big( P_+u_N\!+\!\tfrac{\nu}{2}\big) \p _xP_-u_N\Big] +4\sgm P_-\Big[ e^{-i\sgm F_N}u_N\big( P_+u_N\!+\!\tfrac{\nu}{2}\big) \p _xP_+u_N\Big] \\
&\hx +2\sgm P_+\Big[ e^{-i\sgm F_N}u_N^2\p _xP_-u_N\Big] +2\sgm \p _xP_-\big[ \p _x^{-1}P_-\big( e^{-i\sgm F_N}u_N^2\big) \cdot \p _xP_+u_N\big] \\
&\hx -2\sgm e^{-i\sgm F_N}\big( P_+u_N+\tfrac{\nu}{2}\big) B(u_N,u_N)+R_N[u]}
in the classical sense, where
\eqq{R_N[u]&:=e^{-i\sgm F_N}\Big\{ \Big( -iP_{\neq c}\big[ \big( P_{\neq c}(u_N^2)\big) ^2\big] -2i\sgm \p _x^{-1}P_{\neq c}(u_NG_N)\Big) \big( P_+u_N+\tfrac{\nu}{2}\big) +P_+G_N\Big\} \\
&\hx -\H \Big[ e^{-i\sgm F_N}\big( P_{\neq c}(u_N^2)\big) ^2\big( P_+u_N+\tfrac{\nu}{2}\big) \Big] -2\sgm P_{c}\big( u_Ni\H \p _xu_N\big) e^{-i\sgm F_N}\big( P_+u_N+\tfrac{\nu}{2}\big) \\
&\hx +2\sgm P_{c}\Big[ e^{-i\sgm F_N}u_N\big( P_+u_N+\tfrac{\nu}{2}\big) i\H \p _xu_N\Big] \\
&\hx +2\sgm (P_c+P_-)\Big[ e^{-i\sgm F_N}P_+\big( P_{\neq c}(u_N^2)\p _xu_N\big) \Big] -2\sgm P_+\Big[ e^{-i\sgm F_N}(P_c+P_-)\big( P_{\neq c}(u_N^2)\p _xu_N\big) \Big] \\
&\hx -2\sgm P_c(u_N^2)\Big( P_+\big[ e^{-i\sgm F_N}\p _xP_-u_N\big] +P_-\big[ e^{-i\sgm F_N}\p _xP_+u_N\big] \Big) \\
&\hx +iP_c\big[ e^{-i\sgm F_N}P_+\p _x^2u_N\big] -2\sgm P_{c}(u_N^2)P_-\big[ \p _x^{-1}P_-\big( e^{-i\sgm F_N}\big) \cdot \p _x^2P_+u_N\big] .
}

To take the limit $N\to \I$ in \eqref{eq:v_N}, we prepare one more estimate on the bilinear form $B(f,g)$ which is easily deduced from \eqref{prod-est1}:
\eq{prod-est2}{\norm{B(f,g)}{H^{s-1}}\lec \norm{f}{H^s}\norm{g}{H^s},\qquad s>1/2.}
We also note that $G_N\to 0$ in $C_TH^{s-1}$ as $N\to \I$, which also follows from \eqref{prod-est1}.
Exploiting \eqref{prod-est1} and \eqref{prod-est2}, and Lemma~\ref{lem:exp}, we can take the limit of the right-hand side of \eqref{eq:v_N} in $C_TH^{s-1}$, and its left-hand side in the sense of distribution, obtaining the equation for $v$ as
\eqq{(\p _t+\H \p _x^2)v&=-4\sgm P_+\Big[ e^{-i\sgm F[u]}u\big( P_+u+\tfrac{\nu}{2}\big) \p _xP_-u\Big] +4\sgm P_-\Big[ e^{-i\sgm F[u]}u\big( P_+u+\tfrac{\nu}{2}\big) \p _xP_+u\Big] \\*
&\hx +2\sgm P_+\Big[ e^{-i\sgm F[u]}u^2\p _xP_-u\Big] +2\sgm \p _xP_-\big[ \p _x^{-1}P_-\big( e^{-i\sgm F[u]}u^2\big) \cdot \p _xP_+u\big] \\*
&\hx -2\sgm e^{-i\sgm F[u]}\big( P_+u+\tfrac{\nu}{2}\big) \p _x^{-1}\big( (P_+\p _xu)^2-(P_-\p _xu)^2\big) +R[u],}
where
\eq{def:R}{R[u]&:=-ie^{-i\sgm F[u]}P_{\neq c}\big[ \big( P_{\neq c}(u^2)\big) ^2\big] \big( P_+u+\tfrac{\nu}{2}\big) -\H \Big[ e^{-i\sgm F[u]}\big( P_{\neq c}(u^2)\big) ^2\big( P_+u+\tfrac{\nu}{2}\big) \Big] \\
&\hx -2\sgm P_{c}\big( ui\H \p _xu\big) e^{-i\sgm F[u]}\big( P_+u+\tfrac{\nu}{2}\big) +2\sgm P_{c}\Big[ e^{-i\sgm F[u]}u\big( P_+u+\tfrac{\nu}{2}\big) i\H \p _xu\Big] \\
&\hx +2\sgm (P_c+P_-)\Big[ e^{-i\sgm F[u]}P_+\big( P_{\neq c}(u^2)\p _xu\big) \Big] -2\sgm P_+\Big[ e^{-i\sgm F[u]}(P_c+P_-)\big( P_{\neq c}(u^2)\p _xu\big) \Big] \\
&\hx -2\sgm P_c(u^2)\Big( P_+\big[ e^{-i\sgm F[u]}\p _xP_-u\big] +P_-\big[ e^{-i\sgm F[u]}\p _xP_+u\big] \Big) \\
&\hx +iP_c\big[ e^{-i\sgm F[u]}P_+\p _x^2u\big] -2\sgm P_{c}(u^2)P_-\big[ \p _x^{-1}P_-\big( e^{-i\sgm F[u]}\big) \cdot \p _x^2P_+u\big] .
}
Substituting \eqref{u_via_v} in the above equation, we finally obtain 
\eq{eq:v}{(\p _t+\H \p _x^2)v&=-2\sgm P_+\Big[ e^{i\sgm F[u]}v^2\p _xP_-(e^{-i\sgm F[u]}\bar{v})\Big] +2\sgm P_+\Big[ e^{-3i\sgm F[u]}\bar{v}^2\p _xP_-(e^{-i\sgm F[u]}\bar{v})\Big] \\
&\hx +4\sgm P_-\Big[ e^{i\sgm F[u]}v^2\p _xP_+(e^{i\sgm F[u]}v)\Big] +4\sgm P_-\Big[ e^{-i\sgm F[u]}v\bar{v}\p _xP_+(e^{i\sgm F[u]}v)\Big] \\
&\hx +2\sgm \p _xP_-\Big[ \p _x^{-1}P_-\big( e^{i\sgm F[u]}v^2+2e^{-i\sgm F[u]}v\bar{v}+e^{-3i\sgm F[u]}\bar{v}^2\big) \cdot \p _xP_+(e^{i\sgm F[u]}v)\Big] \\
&\hx -2\sgm v\p _x^{-1}\Big[ \big( \p _xP_+(e^{i\sgm F[u]}v)\big) ^2\Big] +2\sgm v\p _x^{-1}\Big[ \big( \p _xP_-(e^{-i\sgm F[u]}\bar{v})\big) ^2\Big] +R[u],}
with $R[u]$ given by \eqref{def:R}.

It is easy to see that all the terms on the right-hand side of \eqref{eq:v} belong to $C_TH^{s-1}$ as long as $u$ is in $C_TH^s$ with $1/2<s\le 1$ and so is $v$. 
Moreover, the remainder $R[u]$ is in $C_TH^s$; in fact, from Lemma~\ref{lem:exp} we see
\eq{est:Ru}{\norm{R[u]}{C_TH^s}&\lec X_s(u)\big( \tnorm{u}{C_TH^s}^3+\tnorm{u}{C_TH^s}^5\big) +X_{s+1}(u)\big( \tnorm{u}{C_TH^s}+\tnorm{u}{C_TH^s}^3\big)\\
&\lec \big( 1+\norm{u}{C_TH^s}^6\big) \norm{u}{C_TH^s},\\
\norm{R[u]-R[\ti{u}]}{C_TH^s}&\lec Y_s(u,\ti{u})\big( \tnorm{u}{C_TH^s}^3+\tnorm{\ti{u}}{C_TH^s}^3+\tnorm{u}{C_TH^s}^5+\tnorm{\ti{u}}{C_TH^s}^5\big) \\
&\quad +\big( X_{s}(u)\!+\!X_{s}(\ti{u})\big) \big( \tnorm{u}{C_TH^s}^2\!+\!\tnorm{\ti{u}}{C_TH^s}^2\!+\!\norm{u}{C_TH^s}^4\!+\!\tnorm{\ti{u}}{C_TH^s}^4\big) \tnorm{u\!-\!\ti{u}}{C_TH^s}\\
&\quad +Y_{s+1}(u,\ti{u})\big( \tnorm{u}{C_TH^s}+\tnorm{\ti{u}}{C_TH^s}+\tnorm{u}{C_TH^s}^3+\tnorm{\ti{u}}{C_TH^s}^3\big) \\
&\quad +\big( X_{s+1}(u)+X_{s+1}(\ti{u})\big) \big( 1+\tnorm{u}{C_TH^s}^2+\tnorm{\ti{u}}{C_TH^s}^2\big) \tnorm{u-\ti{u}}{C_TH^s}\\
&\lec \big( 1+\norm{u}{C_TH^s}^8+\norm{\ti{u}}{C_TH^s}^8\big) \norm{u-\ti{u}}{C_TH^s}.
}

In \eqref{eq:v}, the original unknown function $u$ appears only in the gauge and the remainder parts.
Since these terms behave better than the others in view of Lemma~\ref{lem:exp} and \eqref{est:Ru}, the main part of the nonlinear interactions in \eqref{eq:v} essentially consists of $v$ only and is of the following types:
\eqq{P_\pm \big[ v\cdot v\cdot \p _xP_\mp v\big] ,\qquad \p _xP_-\big[ \p _x^{-1}(v\cdot v)\cdot \p _xP_+v\big] ,\qquad v\cdot \p _x^{-1}\big[ \p _xP_\pm v\big] ^2.}
In particular, there is no high-low type interaction in which the spatial derivative is put on the function of the highest frequency and cannot be moved to any other functions.


\bigskip
\section{Reduction to the fundamental quintilinear estimates}\label{sec:nf}

\smallskip
In this section, we apply the method of normal form reduction to the equation \eqref{eq:v} of $v$.
The a priori bound for a solution and difference of solutions will be reduced to certain fundamental quintilinear estimates (Proposition~\ref{prop:fundamental} below), which will be proved in Section~\ref{sec:proof-multi}.

\smallskip
\subsection{Equation in the Fourier side}
Let $1/2<s\le 1$ and $u,v\in C_TH^s$ be solutions of \eqref{mBO'} and \eqref{eq:v}, respectively, related through \eqref{utov}.
We first introduce a new unknown function
\eq{vtoom}{\om (t,n):=e^{itn|n|}\F [v(t,\cdot )](n)}
and derive the equation for $\om$.
We denote by $\Sc{Q}(m)$ the quintilinear form on $\ell ^2(\Bo{Z})$ associated with a multiplier $m(n,n_1,\dots ,n_5)$ on $\{ (n,n_1,\dots ,n_5)\in \Bo{Z}^6:n=n_{12345}\}$ defined by 
\eqq{\Sc{Q}\big( m;\;\om _1,W_2,\om _3,W_4,\om _5\big) (n):=\sum _{n=n_{12345}}m(n, n_1,\dots ,n_5)\om _1(n_1)W_2(n_2)\om _3(n_3)W_4(n_4)\om _5(n_5).}
With this notation, the equation for $\om$ reads as
\eqq{\p _t\om (t,n)&=\sum _{i=1}^7\Sc{Q}_i\big( e^{it\Phi}m_i;\;u(t),\om (t)\big) (n)+e^{itn|n|}\F (R[u])(t,n),}
where
\eqq{\Sc{Q}_1\big( m;\;u,\om \big) &:=\Sc{Q}\Big( m;\;\om ,\,\F (e^{i\sgm F[u]}),\,\om ,\,\F (e^{-i\sgm F[u]}),\,\om ^*\Big) ,\\
\Sc{Q}_2\big( m;\;u,\om \big) &:=\Sc{Q}\Big( m;\;\om ^*,\,\F (e^{-3i\sgm F[u]}),\,\om ^*,\,\F (e^{-i\sgm F[u]}),\,\om ^*\Big) ,\\
\Sc{Q}_3\big( m;\;u,\om \big) &:=\Sc{Q}\Big( m;\;\om ,\,\F (e^{i\sgm F[u]}),\,\om ,\,\F (e^{i\sgm F[u]}),\,\om \Big) ,\\
\Sc{Q}_4\big( m;\;u,\om \big) &:=\Sc{Q}\Big( m;\;\om ,\,\F (e^{-i\sgm F[u]}),\,\om ^*,\,\F (e^{i\sgm F[u]}),\,\om \Big) ,\\
\Sc{Q}_5\big( m;\;u,\om \big) &:=\Sc{Q}\Big( m;\;\om ^*,\,\F (e^{-3i\sgm F[u]}),\,\om ^*,\,\F (e^{i\sgm F[u]}),\,\om \Big) ,\\
\Sc{Q}_6\big( m;\;u,\om \big) &:=\Sc{Q}\Big( m;\;\om ,\,\F (e^{i\sgm F[u]}),\,\om ,\,\F (e^{i\sgm F[u]}),\,\om \Big) ,\\
\Sc{Q}_7\big( m;\;u,\om \big) &:=\Sc{Q}\Big( m;\;\om ,\,\F (e^{-i\sgm F[u]}),\,\om ^*,\,\F (e^{-i\sgm F[u]}),\,\om ^*\Big) ,}
and
\eqs{m_1:=c_1n_{45}\chf{n>0,\,n_{45}<0},\quad m_2:=c_2n_{45}\chf{n>0,\,n_{45}<0},\\
m_3:=c_3n_{45}\big( 2+\frac{n}{n_{123}}\big) \chf{n<0,\,n_{45}>0},~m_4:=c_4n_{45}\big( 1+\frac{n}{n_{123}}\big) \chf{n<0,\,n_{45}>0},~m_5:=c_5\frac{n_{45}n}{n_{123}}\chf{n<0,\,n_{45}>0},\\
m_6:=c_6\frac{n_{23}n_{45}}{n_{2345}}\chf{n_{23}>0,\,n_{45}>0},\quad m_7:=c_7\frac{n_{23}n_{45}}{n_{2345}}\chf{n_{23}<0,\,n_{45}<0},
}
$\Phi :=n|n|-n_1|n_1|-n_3|n_3|-n_5|n_5|$, $c_i$ ($1\le i\le 7$) are some constants, and
\eqq{\om ^*(t,n):=e^{itn|n|}\F \bar{v}(t,n)=\bbar{e^{it(-n)|-n|}\F v(t,-n)}=\bbar{\om (t,-n)}.}
Note that $|m_i|\lec n_{45}\chf{n<0,\,n_{45}>0}$ for $i=3,4,5$.

We also observe that the equation for $\om ^*$ is written as
\eqq{\p _t\om ^*(t,n)&=\sum _{i=1}^7\Sc{Q}_i^*\big( e^{it\Phi}m_i^*;\;u(t),\om (t)\big) (n)+e^{itn|n|}\F (\bbar{R[u]})(t,n),}
where $\Sc{Q}_i^*$ is defined by replacing $\om ,\om ^*, \F (e^{ikF[u]})$ ($k\in \shugo{\sgm ,-\sgm ,-3\sgm}$) with $\om ^*,\om ,\F (e^{-ikF[u]})$, respectively, in the definition of $\Sc{Q}_i$, and $m_i^*(n,n_1,\dots ,n_5):=\bbar{m_i(-n,-n_1,\dots ,-n_5)}$.

Next, we separate some harmless parts from $\Sc{Q}_i$.
Let $\eta >0$ be a small number, say $\eta =2^{-10}$, and let $\Sc{A}_1,\Sc{A}_2\subset \Bo{Z}^5$ be the sets of frequencies defined by
\eqq{\Sc{A}_1&:=\{ \,(n_l)_{l=1}^5: n_{15}n_{35}=0\,\} \\
&\qquad \cup ~\{ \,(n_l)_{l=1}^5: |n_2|\vee |n_4|\ge \eta ^2(|n_5|\wedge |n_{12345}|)\,\} \\
&\qquad \cup ~\{ \,(n_l)_{l=1}^5: |n_2|\vee |n_4|<\eta ^2|n_5|,\,|n_5|<\eta (|n_1|\wedge |n_3|),\,|n_{24}|\ge \eta |n_{13}|\,\} \\
&\qquad \cup ~\{ \,(n_l)_{l=1}^5: |n_2|\vee |n_4|<\eta ^2|n_5|,\,|n_3|<\eta (|n_1|\wedge |n_5|),\,|n_{24}|\ge \eta |n_{15}|,\,|n_5|\le 2|n_1|\,\} ,\\
\Sc{A}_2&:=\{ \, (n_l)_{l=1}^5 : n_{13}n_{15}=0~or~|n_2|\vee |n_4|\ge \eta (|n_3|\wedge |n_5|)\,\}.}
We divide the multipliers $m_i$ as $\hat{m}_{i}+\hat{\hat{m}}_{i}$,
\eqq{\hat{m}_i&:=\begin{cases}
m_i\chf{(n_l)_{l=1}^5\not\in \Sc{A}_1~and~|\Phi |>|n_2|^2\vee |n_4|^2} &\text{if $1\le i\le 5$},\\
m_i\chf{(n_l)_{l=1}^5\not\in \Sc{A}_2~and~|\Phi |>|n_2|^2\vee |n_4|^2} &\text{if $i=6,7$},
\end{cases}\\
\hat{\hat{m}}_i&:=\begin{cases}
m_i\chf{(n_l)_{l=1}^5\in \Sc{A}_1~or~|\Phi |\le |n_2|^2\vee |n_4|^2} &\text{if $1\le i\le 5$},\\
m_i\chf{(n_l)_{l=1}^5\in \Sc{A}_2~or~|\Phi |\le |n_2|^2\vee |n_4|^2} &\text{if $i=6,7$},
\end{cases}
}
and similarly, $m_i^*=\hat{m}_i^*+\hat{\hat{m}}_i^*$.
The equations for $\om,\om ^*$ are rewritten as
\eq{eq_om}{\p _t\om (t,n)&=\sum _{i=1}^7\Sc{Q}_i\big( e^{it\Phi}\hat{m}_{i};\;u(t),\om (t)\big) (n)+\Sc{R}^{(0)}[u,\om ](t,n),\\[-5pt]
\p _t\om ^*(t,n)&=\sum _{i=1}^7\Sc{Q}_i^*\big( e^{it\Phi}\hat{m}_{i}^*;\;u(t),\om (t)\big) (n)+\bbar{\Sc{R}^{(0)}[u,\om ](t,-n)},}
where
\eqq{\Sc{R}^{(0)}[u,\om ](t,n):=\sum _{i=1}^7\Sc{Q}_i\big( e^{it\Phi}\hat{\hat{m}}_{i};\;u(t),\om (t)\big) (n)+e^{itn|n|}\F (R[u])(t,n).}


\smallskip
\subsection{Definition of tree notation.}
Let us now define the notation of trees, which is a slight modification of that introduced in \cite{GKO13}.
\begin{defn}[Trees]
For $J\in \Bo{N}$, a tree $\Sc{T}$ of the $J$-th generation means a partially ordered set (with a partial order $\succeq$) of $5J+1$ elements satisfying all the following conditions.
\begin{enumerate}
\item There exists only one element $a\in \Sc{T}$ called the root such that $a\succeq b$ for all $b\in \Sc{T}$.
\item  Each element of $\Sc{T}$ except for the root has exactly one parent, where we say that $a\in \Sc{T}$ is the parent of $b\in \Sc{T}$ and $b$ is a child of $a$ if $a\neq b$, $a\succeq b$, and $a\succeq c\succeq b$ implies $c=a$ or $c=b$.
(We write $\Sc{T}^0$ to denote the subset of $\Sc{T}$ consisting of all elements that have a child, and $\Sc{T}^\I :=\Sc{T}\setminus \Sc{T}^0$.)
\item Each element of $\Sc{T}^0$ has exactly five children.
(This condition implies that $\# \Sc{T}^0=J$ and $\# \Sc{T}^\I =4J+1$.)
\item The elements of $\Sc{T}^0$ are numbered from $1$ to $J$ so that $a_{j_1}\succeq a_{j_2}$ implies $j_1\le j_2$, where $a_j$ is the $j$-th element of $\Sc{T}^0$, which we call the $j$-th parent.
\item For each element of $\Sc{T}^0$ its children are numbered from $1$ to $5$, and the second and the fourth children always have no child.
(The set of all the second and the fourth children, which is then a subset of $\Sc{T}^\I$, is denoted by $\Sc{T}^{\I}_1$, and $\Sc{T}^\I _2:=\Sc{T}^\I \setminus \Sc{T}^\I _1$.
Note that $\# \Sc{T}^\I _1=2J$ and $\# \Sc{T}^\I _2=2J+1$.)
\end{enumerate}
We write $\mathfrak{T}(J)$ to denote the set of all trees of the $J$-th generation.
We also define
\eqq{\mathfrak{I}(J):=\shugo{1,2,\dots ,7}^J,\qquad \mathfrak{U}(J):=\mathfrak{I}(J)\times \mathfrak{T}(J)}
for $J\in \Bo{N}$.
Note that $\# \mathfrak{U}(J)=7^J\prod _{j=1}^J(2j-1)$.
\end{defn} 
\begin{defn}[Index functions]
Given $J\in \Bo{N}$ and $\Sc{T}\in \mathfrak{T}(J)$, an index function $\mathbf{n}=(n_a)_{a\in \Sc{T}}$ on $\Sc{T}$ is a map from $\Sc{T}$ to $\Bo{Z}$ such that
\eqq{n_{a}=n_{a_1}+n_{a_2}+\cdots +n_{a_5}}
for all $a\in \Sc{T}^0$, where $a_l$ stands for the $l$-th child of $a$.
We write $\mathfrak{N}(\Sc{T})$ to denote the set of all index functions on $\Sc{T}$.
Given $\mathbf{n}\in \mathfrak{N}(\Sc{T})$, we define
\eqq{\Phi _j:=n_{a^j}|n_{a^j}|-n_{a_1^j}|n_{a_1^j}|-n_{a_3^j}|n_{a_3^j}|-n_{a_5^j}|n_{a_5^j}|}
for $j=1,2,\cdots ,J$, where $a_j$ means the $j$-th parent.
\end{defn}

Using these notations the equation \eqref{eq_om} can be written as
\eq{eq_om1}{\p _t\om (t,n)&=\!\!\sum _{(\mathbf{i},\Sc{T})\in \mathfrak{U}(1)}\sum _{\mat{\mathbf{n}\in \mathfrak{N}(\Sc{T})\\ n_{\text{root}}=n}}e^{it\Phi _1}\hat{m}_{i_1}\big( n,(n_l)_{l=1}^5\big) \prod _{a\in \Sc{T}^\I _1}\!\!W_a(t,n_a)\prod _{b\in \Sc{T}^\I _2}\!\!\om _b(t,n_b)+\Sc{R}^{(0)}[u,\om ](t,n)\\
:\!\!&=\Sc{N}^{(1)}[u,\om ](t,n)+\Sc{R}^{(0)}[u,\om ](t,n),\\
\p _t\om ^*(t,n)&=\!\!\sum _{(\mathbf{i},\Sc{T})\in \mathfrak{U}(1)}\sum _{\mat{\mathbf{n}\in \mathfrak{N}(\Sc{T})\\ n_{\text{root}}=n}}e^{it\Phi _1}\hat{m}_{i_1}^*\big( n,(n_l)_{l=1}^5\big) \prod _{a\in \Sc{T}^\I _1}\!\!W_a(t,n_a)\prod _{b\in \Sc{T}^\I _2}\!\!\om _b(t,n_b)+\bbar{\Sc{R}^{(0)}[u,\om ](t,-n)}\\
&=\bbar{\Sc{N}^{(1)}[u,\om ](t,-n)}+\bbar{\Sc{R}^{(0)}[u,\om ](t,-n)},}
where $n_l$ denotes the value of $\mathbf{n}$ at the $l$-th child of the root in $\Sc{T}\in \mathfrak{T}(1)$, $W_a$ matches one of $\F [e^{ikF[u(t)]}]$, $k\in \shugo{\pm 1,\pm 3}$, and $\om _b$ is either $\om$ or $\om ^*$.

\smallskip
\subsection{Normal form reduction.}
In this subsection we carry out all computations formally and postpone the justification of each step until Subsection~\ref{sec:proof}.

Now, we proceed to the first normal form reduction step.
Let $M>1$ be a large constant to be chosen at the end of the proof of a priori estimates according to the size of the initial datum.
We first divide the summation over $\mathbf{n}$ in the first equation of \eqref{eq_om1} into resonant and non-resonant frequencies: 
\eqq{\Sc{N}^{(1)}[u,\om ](t,n)&=\sum _{(\mathbf{i},\Sc{T})\in \mathfrak{U}(1)}\sum _{\mat{\mathbf{n}\in \mathfrak{N}(\Sc{T}),\,n_{\text{root}}=n\\ |\Phi _1|\le M}}e^{it\Phi _1}\hat{m}_{i_1}\big( n,(n_l)_{l=1}^5\big) \prod _{a\in \Sc{T}^\I _1}W_a(t,n_a)\prod _{b\in \Sc{T}^\I _2}\om _b(t,n_b)\\
&\hx +\sum _{(\mathbf{i},\Sc{T})\in \mathfrak{U}(1)}\sum _{\mat{\mathbf{n}\in \mathfrak{N}(\Sc{T}),\,n_{\text{root}}=n\\ |\Phi _1|>M}}e^{it\Phi _1}\hat{m}_{i_1}\big( n,(n_l)_{l=1}^5\big) \prod _{a\in \Sc{T}^\I _1}W_a(t,n_a)\prod _{b\in \Sc{T}^\I _2}\om _b(t,n_b)\\
&=:\Sc{N}^{(1)}_R[u,\om ](t,n)+\Sc{N}^{(1)}_{N\!R}[u,\om ](t,n).}
We then apply differentiation by parts with respect to $t$ only to the non-resonant part:
\eqq{&\Sc{N}^{(1)}_{N\!R}[u,\om ](t,n)\\
&=\p _t\Big[ \sum _{(\mathbf{i},\Sc{T})\in \mathfrak{U}(1)}\sum _{\mat{\mathbf{n}\in \mathfrak{N}(\Sc{T}),\,n_{\text{root}}=n\\ |\Phi _1|>M}}\frac{e^{it\Phi _1}\hat{m}_{i_1}\big( n,(n_l)_{l=1}^5\big)}{i\Phi _1}\prod _{a\in \Sc{T}^\I _1}W_a(t,n_a)\prod _{b\in \Sc{T}^\I _2}\om _b(t,n_b)\Big] \\
&\hx -\sum _{(\mathbf{i},\Sc{T})\in \mathfrak{U}(1)}\sum _{\mat{\mathbf{n}\in \mathfrak{N}(\Sc{T}),\,n_{\text{root}}=n\\ |\Phi _1|>M}}\frac{e^{it\Phi _1}\hat{m}_{i_1}\big( n,(n_l)_{l=1}^5\big)}{i\Phi _1}\p _t\Big[ \prod _{a\in \Sc{T}^\I _1}W_a(t,n_a)\Big] \prod _{b\in \Sc{T}^\I _2}\om _b(t,n_b)\\
&\hx -\sum _{(\mathbf{i},\Sc{T})\in \mathfrak{U}(1)}\sum _{\mat{\mathbf{n}\in \mathfrak{N}(\Sc{T}),\,n_{\text{root}}=n\\ |\Phi _1|>M}}\frac{e^{it\Phi _1}\hat{m}_{i_1}\big( n,(n_l)_{l=1}^5\big)}{i\Phi _1}\prod _{a\in \Sc{T}^\I _1}W_a(t,n_a)\p _t\Big[ \prod _{b\in \Sc{T}^\I _2}\om _b(t,n_b)\Big] \\
&=:\p _t\Sc{N}^{(1)}_0[u,\om ](t,n)+\Sc{N}^{(1)}_1[u,\om ](t,n)+\Sc{N}^{(1)}_2[u,\om ](t,n).}
Substituting the equation \eqref{eq_om1} into $\Sc{N}^{(1)}_2[u,\om ]$, we have
\eqq{&\Sc{N}^{(1)}_2[u,\om ](t,n)\\
&=-\sum _{(\mathbf{i},\Sc{T})\in \mathfrak{U}(1)}\sum _{\mat{\mathbf{n}\in \mathfrak{N}(\Sc{T})\\ n_{\text{root}}=n\\ |\Phi _1|>M}}\frac{e^{it\Phi _1}\hat{m}_{i_1}\big( n,(n_l)_{l=1}^5\big)}{i\Phi _1}\prod _{a\in \Sc{T}^\I _1}W_a(t,n_a)\sum _{b\in \Sc{T}^\I _2}\Sc{R}^{(0)}[u,\om ](t,n_b)\prod _{\mat{b'\in \Sc{T}^\I _2\\ b'\neq b}}\om _{b'}(t,n_{b'})\\[-10pt]
&\hx -\sum _{(\mathbf{i},\Sc{T})\in \mathfrak{U}(2)}\sum _{\mat{\mathbf{n}\in \mathfrak{N}(\Sc{T}),\,n_{\text{root}}=n\\ |\Phi _1|>M}}\frac{e^{it(\Phi _1+\Phi _2)}\prod\limits _{j=1}^2\hat{m}_{i_j}\big( n_{a^j},(n_{a^j_l})_{l=1}^5\big)}{i\Phi _1}\prod _{a\in \Sc{T}^\I _1}W_a(t,n_a)\prod _{b\in \Sc{T}^\I _2}\om _b(t,n_b)\\
&=:\Sc{R}^{(1)}[u,\om ](t,n)+\Sc{N}^{(2)}[u,\om ](t,n),}
where $a^j$ and $a^j_l$ ($1\le l\le 5$) denote the $j$-th parent in $\Sc{T}$ and its children, respectively.
It should be remarked that in the above expression, $\hat{m}_{i_2}$ and $\Sc{R}^{(0)}[u,\om ](t,n_b)$ may be replaced with $\hat{m}^*_{i_2}$ and $\bbar{\Sc{R}^{(0)}[u,\om ](t,-n_b)}$, respectively; this is, for instance, in the case where $(\mathbf{i},\Sc{T})\in \mathfrak{U}(2)$ satisfies $i_1=1$ and $a^2=a^1_5$.
However, we may neglect such a difference since it plays no role in the multilinear estimates to be established in the next section.

To summarize, we have obtained the equation
\eq{eq_om2}{\p _t\om =\p _t\Sc{N}^{(1)}_0[u,\om ]+\Sc{N}^{(1)}_R[u,\om ]+\Sc{N}^{(1)}_1[u,\om ]+\sum _{j=0}^1\Sc{R}^{(j)}[u,\om ]+\Sc{N}^{(2)}[u,\om ].}

\smallskip
Let us apply the normal form reduction once more to the term $\Sc{N}^{(2)}[u,\om ]$ in \eqref{eq_om2}, for which the frequencies $\mathbf{n}\in \mathfrak{N}(\Sc{T})$ ($\Sc{T}\in \mathfrak{T}(2)$) are already restricted to the region where $|\Phi _1|>M$ holds.
This time we consider the frequencies satisfying
\eqq{|\Phi _1|>M,\quad |\Phi _1+\Phi _2|\le 2|\Phi _1|\qquad \text{or}\qquad |\Phi _1|>M,\quad |\Phi _1+\Phi _2|>2|\Phi _1|}
as resonant or non-resonant frequencies, respectively.
It holds $|\Phi _1|^{-1}\lec (\LR{\Phi _1}\LR{\Phi _2})^{-1/2}$ in the resonant case, whereas we have $|\Phi _1+\Phi _2|>2M$ and $|\Phi _1+\Phi _2|^{-1}\sim \LR{\Phi _2}^{-1}$ in the non-resonant case.
Similarly to the first reduction, we divide $\Sc{N}^{(2)}[u,\om ]$ as
\eqq{\Sc{N}^{(2)}[u,\om ](n)&=c\sum _{(\mathbf{i},\Sc{T})\in \mathfrak{U}(2)}\bigg[ \sum _{\mat{\mathbf{n}\in \mathfrak{N}(\Sc{T}),\,n_{\text{root}}=n\\ |\Phi _1|>M\\ |\Phi _1+\Phi _2|\le 2|\Phi _1|}}+\sum _{\mat{\mathbf{n}\in \mathfrak{N}(\Sc{T}),\,n_{\text{root}}=n\\ |\Phi _1|>M\\ |\Phi _1+\Phi _2|>2|\Phi _1|}}\bigg] \frac{e^{it(\Phi _1+\Phi _2)}\prod\limits _{j=1}^2\hat{m}_{i_j}\big( n_{a^j},(n_{a^j_l})_{l=1}^5\big)}{\Phi _1}\\[-10pt]
&\hspace{230pt}\times \prod _{a\in \Sc{T}^\I _1}W_a(n_a)\prod _{b\in \Sc{T}^\I _2}\om _b(n_b)\\[-10pt]
&=:\Sc{N}^{(2)}_R[u,\om ](n)+\Sc{N}^{(2)}_{N\!R}[u,\om ](n),}
(where $c$ denotes a complex constant with $|c|=1$, which plays no role in the estimates,) and differentiate by parts as
\eqq{&\Sc{N}^{(2)}_{N\!R}[u,\om ](n)\\
&=\p _t\bigg[ c\sum _{(\mathbf{i},\Sc{T})\in \mathfrak{U}(2)}\sum _{\mat{\mathbf{n}\in \mathfrak{N}(\Sc{T}),\,n_{\text{root}}=n\\ |\Phi _1|>M\\ |\Phi _1+\Phi _2|>2|\Phi _1|}}\frac{e^{it(\Phi _1+\Phi _2)}\prod\limits _{j=1}^2\hat{m}_{i_j}\big( n_{a^j},(n_{a^j_l})_{l=1}^5\big)}{\Phi _1(\Phi _1+\Phi _2)}\prod _{a\in \Sc{T}^\I _1}W_a(n_a)\prod _{b\in \Sc{T}^\I _2}\om _b(n_b)\bigg] \\
&\hx +c\sum _{(\mathbf{i},\Sc{T})}\sum _{\mathbf{n}}\frac{e^{it(\Phi _1+\Phi _2)}\prod _j\hat{m}_{i_j}\big( n_{a^j},(n_{a^j_l})_{l=1}^5\big)}{\Phi _1(\Phi _1+\Phi _2)}\p _t\bigg[ \prod _{a\in \Sc{T}^\I _1}W_a(n_a)\bigg] \prod _{b\in \Sc{T}^\I _2}\om _b(n_b)\\
&\hx +c\sum _{(\mathbf{i},\Sc{T})}\sum _{\mathbf{n}}\frac{e^{it(\Phi _1+\Phi _2)}\prod _j\hat{m}_{i_j}\big( n_{a^j},(n_{a^j_l})_{l=1}^5\big)}{\Phi _1(\Phi _1+\Phi _2)}\prod _{a\in \Sc{T}^\I _1}W_a(n_a)\p _t\bigg[ \prod _{b\in \Sc{T}^\I _2}\om _b(n_b)\bigg] \\
&=:\p _t\Sc{N}^{(2)}_0[u,\om ](n)+\Sc{N}^{(2)}_1[u,\om ](n)+\Sc{N}^{(2)}_2[u,\om ](n),}
and then substitute the equation \eqref{eq_om1} to obtain
\eqq{&\Sc{N}^{(2)}_2[u,\om ](n)\\[-15pt]
&=c\sum _{(\mathbf{i},\Sc{T})\in \mathfrak{U}(2)}\sum _{\mat{\mathbf{n}\in \mathfrak{N}(\Sc{T}),\,n_{\text{root}}=n\\ |\Phi _1|>M\\ |\Phi _1+\Phi _2|>2|\Phi _1|}}\frac{e^{it(\Phi _1+\Phi _2)}\prod\limits _{j=1}^2\hat{m}_{i_j}\big( n_{a^j},(n_{a^j_l})_{l=1}^5\big)}{\Phi _1(\Phi _1+\Phi _2)}\prod _{a\in \Sc{T}^\I _1}W_a(n_a)\\[-15pt]
&\hspace{240pt}\times \sum _{b\in \Sc{T}^\I _2}\Sc{R}^{(0)}[u,\om ](n_b)\prod _{\mat{b'\in \Sc{T}^\I _2\\ b'\neq b}}\om _{b'}(n_{b'})\\[-10pt]
&\hx +c\sum _{(\mathbf{i},\Sc{T})\in \mathfrak{U}(3)}\sum _{\mat{\mathbf{n}\in \mathfrak{N}(\Sc{T}),\,n_{\text{root}}=n\\ |\Phi _1|>M\\ |\Phi _1+\Phi _2|>2|\Phi _1|}}\frac{e^{it(\Phi _1+\Phi _2+\Phi _3)}\prod\limits _{j=1}^3\hat{m}_{i_j}\big( n_{a^j},(n_{a^j_l})_{l=1}^5\big)}{\Phi _1(\Phi _1+\Phi _2)}\prod _{a\in \Sc{T}^\I _1}W_a(n_a)\prod _{b\in \Sc{T}^\I _2}\om _b(n_b)\\
&=:\Sc{R}^{(2)}[u,\om ](n)+\Sc{N}^{(3)}[u,\om ](n).}
We again neglect the difference between $\hat{m}_i$ and $\hat{m}_i^*$ or $\Sc{R}^{(0)}[u,\om ](n_b)$ and $\bbar{\Sc{R}^{(0)}[u,\om ](-n_b)}$, which plays no role in the estimates.
As a consequence, we have the equation
\eqq{\p _t\om =\sum _{j=1}^2\bigg( \p _t\Sc{N}^{(j)}_0[u,\om ]+\Sc{N}^{(j)}_R[u,\om ]+\Sc{N}^{(j)}_1[u,\om ]\bigg) +\sum _{j=0}^2\Sc{R}^{(j)}[u,\om ]+\Sc{N}^{(3)}[u,\om ]}
after the second normal form reduction.

To describe the general step, we define 
\eqq{\Sc{M}_R^{(J)}&:=\begin{cases}
\{ \mu \in \Bo{Z}:|\mu |\le M\} &\text{if $J=1$},\\
\{ (\mu _1,\mu _2)\in \Bo{Z}^2:|\mu _1|>M,\,|\ti{\mu}_2|\le 2|\mu _1|\} &\text{if $J=2$},\\
\{ (\mu _j)_{j=1}^J\in \Bo{Z}^J:|\mu _1|>M,\,|\ti{\mu}_j|>2|\ti{\mu}_{j-1}|~(2\le j\le J\!-\!1),\,|\ti{\mu}_J|\le 2|\ti{\mu}_{J-1}|\} &\text{if $J\ge 3$},
\end{cases}\\
\Sc{M}_{N\!R}^{(J)}&:=\begin{cases}
\{ \mu \in \Bo{Z}:|\mu |>M\} &\text{if $J=1$},\\
\{ (\mu _j)_{j=1}^J\in \Bo{Z}^J:|\mu _1|>M,\,|\ti{\mu}_j|>2|\ti{\mu}_{j-1}|~(2\le j\le J)\}\hspace{91pt} &\text{if $J\ge 2$},
\end{cases}}
where $\ti{\mu}_j:=\mu _1+\mu _2+\cdots +\mu _j$, so that $\Sc{M}^{(J)}_{N\!R}\times \Bo{Z}=\Sc{M}^{(J+1)}_R\cup \Sc{M}^{(J+1)}_{N\!R}$.
Then, the equation for $\om$ obtained after the $J$-th normal form reduction is written as 
\eq{eq_om3}{\p _t\om =\sum _{j=1}^J\bigg( \p _t\Sc{N}^{(j)}_0[u,\om ]+\Sc{N}^{(j)}_R[u,\om ]+\Sc{N}^{(j)}_1[u,\om ]\bigg) +\sum _{j=0}^J\Sc{R}^{(j)}[u,\om ]+\Sc{N}^{(J+1)}[u,\om ],}
where $\ti{\Phi}_j:=\Phi _1+\Phi _2+\cdots +\Phi _j$ and%
\footnote{For $J=1$ we use the convention that $\prod _{j=1}^0p_j=1$ ($p_j\in \R$).}
\eqq{&\Sc{N}_R^{(J)}[u,\om ](n):=c\!\!\!\!\!\sum _{(\mathbf{i},\Sc{T})\in \mathfrak{U}(J)}\sum _{\mat{\mathbf{n}\in \mathfrak{N}(\Sc{T}),\,n_{\text{root}}=n\\ (\Phi _j)_{j=1}^J\in \Sc{M}_R^{(J)}}}\!\!\!\!\!\!\!\!\frac{e^{it\ti{\Phi}_J}\prod\limits _{j=1}^J\hat{m}_{i_j}\big( n_{a^j},(n_{a^j_l})_{l=1}^5\big)}{\prod\limits _{j=1}^{J-1}\ti{\Phi}_j}\prod _{a\in \Sc{T}^\I _1}W_a(n_a)\prod _{b\in \Sc{T}^\I _2}\om _b(n_b),\\
&\Sc{N}_0^{(J)}[u,\om ](n):=c\!\!\!\!\!\sum _{(\mathbf{i},\Sc{T})\in \mathfrak{U}(J)}\sum _{\mat{\mathbf{n}\in \mathfrak{N}(\Sc{T}),\,n_{\text{root}}=n\\ (\Phi _j)_{j=1}^J\in \Sc{M}_{N\!R}^{(J)}}}\!\!\!\!\!\!\!\!\frac{e^{it\ti{\Phi}_J}\prod\limits _{j=1}^J\hat{m}_{i_j}\big( n_{a^j},(n_{a^j_l})_{l=1}^5\big)}{\prod\limits _{j=1}^J\ti{\Phi}_j}\prod _{a\in \Sc{T}^\I _1}W_a(n_a)\prod _{b\in \Sc{T}^\I _2}\om _b(n_b),\\
&\Sc{N}_1^{(J)}[u,\om ](n):=c\!\!\!\!\!\sum _{(\mathbf{i},\Sc{T})\in \mathfrak{U}(J)}\sum _{\mat{\mathbf{n}\in \mathfrak{N}(\Sc{T}),\,n_{\text{root}}=n\\ (\Phi _j)_{j=1}^J\in \Sc{M}_{N\!R}^{(J)}}}\!\!\!\!\!\!\!\!\frac{e^{it\ti{\Phi}_J}\prod\limits _{j=1}^J\hat{m}_{i_j}\big( n_{a^j},(n_{a^j_l})_{l=1}^5\big)}{\prod\limits _{j=1}^J\ti{\Phi}_j}\p _t\bigg[ \prod _{a\in \Sc{T}^\I _1}\!\!W_a(n_a)\bigg] \prod _{b\in \Sc{T}^\I _2}\om _b(n_b),\\
&\Sc{R}^{(J)}[u,\om ](n):=c\!\!\!\!\!\sum _{(\mathbf{i},\Sc{T})\in \mathfrak{U}(J)}\sum _{\mat{\mathbf{n}\in \mathfrak{N}(\Sc{T}),\,n_{\text{root}}=n\\ (\Phi _j)_{j=1}^J\in \Sc{M}_{N\!R}^{(J)}}}\!\!\!\!\!\!\!\!\frac{e^{it\ti{\Phi}_J}\prod\limits _{j=1}^J\hat{m}_{i_j}\big( n_{a^j},(n_{a^j_l})_{l=1}^5\big)}{\prod\limits _{j=1}^J\ti{\Phi}_j}\prod _{a\in \Sc{T}^\I _1}W_a(n_a)\\[-10pt]
&\hspace{260pt}\times \sum _{b\in \Sc{T}^\I _2}\Sc{R}^{(0)}[u,\om ](n_b)\prod _{\mat{b'\in \Sc{T}^\I _2\\ b'\neq b}}\om _{b'}(n_{b'}),\\[-10pt]
&\Sc{N}^{(J+1)}[u,\om ](n):=c\!\!\!\!\!\sum _{(\mathbf{i},\Sc{T})\in \mathfrak{U}(J+1)}\sum _{\mat{\mathbf{n}\in \mathfrak{N}(\Sc{T}),\,n_{\text{root}}=n\\ (\Phi _j)_{j=1}^J\in \Sc{M}_{N\!R}^{(J)}}}\!\!\!\!\!\!\!\!\frac{e^{it\ti{\Phi}_{J+1}}\prod\limits _{j=1}^{J+1}\hat{m}_{i_j}\big( n_{a^j},(n_{a^j_l})_{l=1}^5\big)}{\prod\limits _{j=1}^J\ti{\Phi}_j}\!\prod _{a\in \Sc{T}^\I _1}\!\!W_a(n_a)\prod _{b\in \Sc{T}^\I _2}\!\!\om _b(n_b).}
In fact, we make a resonant/non-resonant decomposition of the term $\Sc{N}^{(J)}$ in the equation after  the $(J-1)$-th reduction as
\eqs{\Sc{N}^{(J)}[u,\om ](n)=\Sc{N}_{R}^{(J)}[u,\om ](n)+\Sc{N}_{N\!R}^{(J)}[u,\om ](n),\\
\Sc{N}_{N\!R}^{(J)}[u,\om ](n):=c\!\!\!\!\!\sum _{(\mathbf{i},\Sc{T})\in \mathfrak{U}(J)}\sum _{\mat{\mathbf{n}\in \mathfrak{N}(\Sc{T}),\,n_{\text{root}}=n\\ (\Phi _j)_{j=1}^J\in \Sc{M}_{N\!R}^{(J)}}}\!\!\!\!\!\!\!\!\frac{e^{it\ti{\Phi}_J}\prod\limits _{j=1}^J\hat{m}_{i_j}\big( n_{a^j},(n_{a^j_l})_{l=1}^5\big)}{\prod\limits _{j=1}^{J-1}\ti{\Phi}_j}\prod _{a\in \Sc{T}^\I _1}W_a(n_a)\prod _{b\in \Sc{T}^\I _2}\om _b(n_b),
}
and apply differentiation by parts to $\Sc{N}_{N\!R}^{(J)}[u,\om ](n)$ as
\eqs{\Sc{N}_{N\!R}^{(J)}[u,\om ](n)=\p _t\Sc{N}_0^{(J)}[u,\om ](n)+\Sc{N}_1^{(J)}[u,\om ](n)+\Sc{N}_2^{(J)}[u,\om ](n),\\
\Sc{N}_2^{(J)}[u,\om ](n):=c\!\!\!\!\!\sum _{(\mathbf{i},\Sc{T})\in \mathfrak{U}(J)}\sum _{\mat{\mathbf{n}\in \mathfrak{N}(\Sc{T}),\,n_{\text{root}}=n\\ (\Phi _j)_{j=1}^J\in \Sc{M}_{N\!R}^{(J)}}}\!\!\!\!\!\!\!\!\!\!\!\frac{e^{it\ti{\Phi}_J}\prod\limits _{j=1}^J\hat{m}_{i_j}\big( n_{a^j},(n_{a^j_l})_{l=1}^5\big)}{\prod\limits _{j=1}^J\ti{\Phi}_j}\!\!\!\prod _{a\in \Sc{T}^\I _1}\!\!W_a(n_a)\cdot \p _t\bigg[ \prod _{b\in \Sc{T}^\I _2}\!\!\om _b(n_b)\bigg] ,
}
and substitute the equation \eqref{eq_om1}:
\eqq{\Sc{N}_2^{(J)}[u,\om ](n)=\Sc{R}^{(J)}[u,\om ](n)+\Sc{N}^{(J+1)}[u,\om ](n),}
which leads to \eqref{eq_om3}.

Integrating \eqref{eq_om3} with respect to $t$, we obtain 
\eq{eq_om4}{\om (\cdot ,n)\Big| _0^t&=\sum _{j=1}^J\Sc{N}^{(j)}_0[u,\om ](\cdot ,n)\Big| _0^t+\sum _{j=1}^J\int _0^t\Big[ \Sc{N}^{(j)}_R[u,\om ](\tau ,n)+\Sc{N}^{(j)}_1[u,\om ](\tau ,n)\Big] \,d\tau \\
&\hx +\sum _{j=0}^J\int _0^t\Sc{R}^{(j)}[u,\om ] (\tau ,n)\,d\tau +\int _0^t\Sc{N}^{(J+1)}[u,\om ](\tau ,n)\,d\tau .}
This integral form will be used to derive a priori bounds on $\om$ in Subsection~\ref{sec:proof}.

As observed in \cite[Section~2.4]{K-all}, we see the following:
\begin{itemize}
\item If $\Phi _1=\ti{\Phi}_1\in \Sc{M}_R^{(1)}$, then $\big| e^{it\ti{\Phi}_1}\hat{m}_{i_1}\big( n,\,(n_l)_{l=1}^5\big) \big| \lec M^{1/2}\LR{\Phi _1}^{-1/2}\big| \hat{m}_{i_1}\big( n,\,(n_l)_{l=1}^5\big) \big|$.
\item If $J\ge 2$ and $(\Phi _j)_{j=1}^J\in \Sc{M}_R^{(J)}$, then $|\Phi _j|\sim |\ti{\Phi}_j|>2^{j-1}M$ ($1\le j\le J-1$), $|\Phi _J|\lec |\ti{\Phi}_{J-1}|$ and
\eq{est:Phi1}{\left| \frac{e^{it\ti{\Phi}_J}\prod_{j=1}^J\hat{m}_{i_j}\big( n_{a^j},(n_{a^j_l})_{l=1}^5\big)}{\prod _{j=1}^{J-1}\ti{\Phi}_j}\right| \le C^J\frac{\prod_{j=1}^J\left| \hat{m}_{i_j}\big( n_{a^j},(n_{a^j_l})_{l=1}^5\big) \right|}{\prod _{j=1}^{J-2}(2^{j-1}M)^{1/2}\prod _{j=1}^{J}\LR{\Phi _j}^{1/2}}.}
\item If $J\ge 1$ and $(\Phi _j)_{j=1}^J\in \Sc{M}_{N\!R}^{(J)}$, then $|\Phi _j|\sim |\ti{\Phi}_j|>2^{j-1}M$ ($1\le j\le J$) and
\begin{align}
\label{est:Phi2} 
\left| \frac{e^{it\ti{\Phi}_J}\prod_{j=1}^J\hat{m}_{i_j}\big( n_{a^j},(n_{a^j_l})_{l=1}^5\big)}{\prod _{j=1}^{J}\ti{\Phi}_j}\right| &\le C^J\frac{\prod_{j=1}^J\left| \hat{m}_{i_j}\big( n_{a^j},(n_{a^j_l})_{l=1}^5\big) \right|}{\prod _{j=1}^{J}(2^{j-1}M)^{\de}\prod _{j=1}^{J}\LR{\Phi _j}^{1-\de}}\\
\label{est:Phi2'}
&\le C^J\frac{\prod_{j=1}^J\left| \hat{m}_{i_j}\big( n_{a^j},(n_{a^j_l})_{l=1}^5\big) \right|}{\prod _{j=1}^{J}(2^{j-1}M)^{1/2}\prod _{j=1}^{J}\LR{\Phi _j}^{1/2}}
\end{align}
for any $\de \in (0,1/2]$.
\end{itemize}
These inequalities allow us to reduce the multilinear estimates of degree $4J+1$ for the terms appearing after the $J$-th normal form reduction to just $J$ repetitions of fundamental quintilinear estimates given in Proposition~\ref{prop:fundamental} below.

Finally, we recall that the above computations in each step of the normal form reduction, which include switch of the order of summation and time differentiation, application of the product rule for time differentiation, and substitution of the equation \eqref{eq_om1}, may not be justified for general distributional solutions $u\in C_TH^s$, $\om \in C_T\ell ^2_s$ if $s$ is not large enough.
This issue will be handled in Subsection~\ref{sec:proof} by using multilinear estimates to be established in Subsection~\ref{sec:multi}.


%

\smallskip
\subsection{Multilinear estimates}\label{sec:multi}

We begin with stating the fundamental estimates for quintilinear forms, which will be proved in Section~\ref{sec:proof-multi}.
\begin{prop}\label{prop:fundamental}
Let $s>1/2$, and let $\hat{m}$ (\mbox{resp.} $\hat{\hat{m}}$) be one of $\shugo{\hat{m}_i,\hat{m}_i^*\,;\,1\le i\le 7}$ (\mbox{resp.} $\{\hat{\hat{m}}_i\,;\,1\le i\le 7\}$).
There exists $\de >0$ such that we have
\begin{align}
\begin{split}
&\norm{\sum _{n=n_{12345}}\big| \hat{\hat{m}}\big| \;\om _1(n_1)W_2(n_2)\om _3(n_3)W_4(n_4)\om _5(n_5)}{\ell ^2_s} \\[-5pt]
&\hspace{90pt} \lec \norm{\om _1}{\ell ^2_s}\norm{\om _3}{\ell ^2_s}\norm{\om _5}{\ell ^2_s}\Big( \norm{W_2}{\ell ^2_{s+1}}\norm{W_4}{\ell ^2_{s}}+\norm{W_2}{\ell ^2_{s}}\norm{W_4}{\ell ^2_{s+1}}\Big) ,
\end{split}\label{est:matome-0}\\[10pt]
\begin{split}
&\norm{\sum _{n=n_{12345}}\frac{\big| \hat{m}\big|}{\LR{\Phi}^{1/2}}\om _1(n_1)W_2(n_2)\om _3(n_3)W_4(n_4)\om _5(n_5)}{\ell ^2_s} \\
&\hspace{200pt} \lec \norm{\om _1}{\ell ^2_s}\norm{W_2}{\ell ^2_s}\norm{\om _3}{\ell ^2_s}\norm{W_4}{\ell ^2_s}\norm{\om _5}{\ell ^2_s},
\end{split}\label{est:matome-1}\\[10pt]
\begin{split}
&\norm{\sum _{n=n_{12345}}\frac{\big| \hat{m}\big|}{\LR{\Phi}}\om _1(n_1)W_2(n_2)\om _3(n_3)W_4(n_4)\om _5(n_5)}{\ell ^2_s} \\[-5pt]
&\hspace{90pt} \lec \norm{\om _1}{\ell ^2_s}\norm{\om _3}{\ell ^2_s}\norm{\om _5}{\ell ^2_s}\Big( \norm{W_2}{\ell ^2_{s-1}}\norm{W_4}{\ell ^2_{s}}\wedge \norm{W_2}{\ell ^2_{s}}\norm{W_4}{\ell ^2_{s-1}}\Big) ,
\end{split}\label{est:matome-2}\\[10pt]
\begin{split}
&\norm{\sum _{n=n_{12345}}\frac{\big| \hat{m}\big|}{\LR{\Phi}^{1-\de}}\om _1(n_1)W_2(n_2)\om _3(n_3)W_4(n_4)\om _5(n_5)}{\ell ^2_{s-1}} \\
&\lec \Big( \norm{\om _1}{\ell ^2_{s-1}}\norm{\om _3}{\ell ^2_s}\norm{\om _5}{\ell ^2_s}\wedge \norm{\om _1}{\ell ^2_s}\norm{\om _3}{\ell ^2_{s-1}}\norm{\om _5}{\ell ^2_s}\wedge \norm{\om _1}{\ell ^2_s}\norm{\om _3}{\ell ^2_s}\norm{\om _5}{\ell ^2_{s-1}}\Big) \norm{W_2}{\ell ^2_s}\norm{W_4}{\ell ^2_s}
\end{split}\label{est:matome-3}
\end{align}
for any real-valued non-negative functions $\om _1,\om _3,\om _5$, $W_2,W_4$.
\end{prop}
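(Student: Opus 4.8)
The plan is to reduce all four estimates to a single combinatorial heart: a dyadic decomposition of the six frequencies $n,n_1,\dots,n_5$, followed by a careful case analysis governed by the structure of the multipliers $m_i$ and the modulation $\Phi$. First I would set $N_j := \LR{n_j}$ ($j=1,\dots,5$), $N := \LR{n}$, and let $N_{(1)}\ge N_{(2)}\ge\cdots$ be the decreasing rearrangement. The key algebraic input is the standard Benjamin--Ono-type resonance identity: since $\Phi = n|n| - n_1|n_1| - n_3|n_3| - n_5|n_5|$ with $n = n_{12345}$, one has, after localizing to the frequency regimes surviving the cutoffs defining $\hat m_i$ (recall $|n_2|\vee|n_4| < \eta^2(|n_5|\wedge|n|)$ off $\Sc A_1$, and similarly for $\Sc A_2$), a lower bound of the shape $|\Phi| \gtrsim N_{(1)} N_{(1,3,5)-\mathrm{med}}$ — roughly, $|\Phi|$ controls the product of the largest and the ``second-largest among $n_1,n_3,n_5$'' frequencies, i.e.\ the true high-high interaction scale. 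The cutoff $|\Phi| > |n_2|^2\vee|n_4|^2$ built into $\hat m_i$ ensures the modulation always dominates the (small) frequencies sitting in the $W$-slots, which is exactly what lets us place $W_2,W_4$ in $\ell^2_s$ rather than needing a derivative on them in \eqref{est:matome-1}.

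The estimates \eqref{est:matome-0}--\eqref{est:matome-3} then follow from the same mechanism of trading the modulation weight $\LR{\Phi}^{-\alpha}$ against derivatives. For \eqref{est:matome-0} there is no modulation gain, so I would use the bound $|\hat{\hat m}| \lesssim |n_{45}|\,\chf{\cdots}$ together with the defining property of $\Sc A_1,\Sc A_2$: on the support of $\hat{\hat m}$, either $n_{15}n_{35}=0$ (so two of the ``large'' frequencies coincide up to sign and the sum collapses to a lower-degree convolution), or one of $|n_2|,|n_4|$ is \emph{comparable} to the relevant large frequency, which is what supplies the single extra derivative on $W_2$ or $W_4$ appearing on the right-hand side; the $\ell^2 \ast \ell^2 \ast \ell^1$-type (Young/Cauchy--Schwarz) convolution estimates, combined with $\ell^2_s$ being an algebra for $s>1/2$, then close it. For \eqref{est:matome-1}--\eqref{est:matome-3} I would distinguish the three regimes according to which of $N_1,N_3,N_5$ is largest (say $N_1\sim N_{(1)}$ by symmetry of the bound), split the frequency interactions into high-high$\to$low, high-low$\to$high, etc., and in each case use $|\Phi|\gtrsim N_{(1)}N_{\mathrm{second}}$ to absorb: a full derivative-squared worth of weight gives \eqref{est:matome-2} (gaining one derivative, so $W$ can even lose one), a half gives \eqref{est:matome-1} (breaking even), and the $\LR\Phi^{-1+\de}$ weight with the $\ell^2_{s-1}$ target on the left gives \eqref{est:matome-3}, where crucially the lost derivative on the output ($s-1$ instead of $s$) can be moved onto whichever of $\om_1,\om_3,\om_5$ does \emph{not} carry the top frequency — this is the source of the three-fold minimum on the right-hand side of \eqref{est:matome-3}. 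Throughout, the reduction to an actual number estimate uses Cauchy--Schwarz in the convolution variable together with $\sum_{n_j}\LR{n_j}^{-2s}<\infty$ for $s>1/2$, and the small loss $\de$ is exactly what this summability costs when the modulation weight is near-critical.

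The main obstacle, and where I expect to spend most of the work, is the high-low type interaction in which a spatial derivative falls on the highest-frequency factor: a priori $|n_1|\sim|n|$ and $m_i\sim n_{45}$ (or $\sim n_{23}n_{45}/n_{2345}$) can be as large as $N_{(1)}$, so without a modulation gain one loses a derivative. This is precisely why the definition of $v$ via the gauge transform was engineered so that ``the spatial derivative is put on the function of the highest frequency'' never occurs (the remark at the end of Section~\ref{sec:gauge}); concretely, in the surviving regimes one always has $n_{45} = n - n_{123}$ with $|n_{123}|$ comparable to $|n|$, so $|n_{45}|$ is \emph{not} the top frequency but rather tied to the cancellation in $n$. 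I would need to verify this carefully for each of the seven multipliers — in particular checking that the factors $2+\frac{n}{n_{123}}$, $1+\frac{n}{n_{123}}$, $\frac{n}{n_{123}}$ in $m_3,m_4,m_5$ are $O(1)$ on the relevant support (which holds since $|n_{123}|\sim|n_5| \gtrsim |n|$ there after removing $\Sc A_1$), and that $\frac{n_{23}n_{45}}{n_{2345}}$ in $m_6,m_7$ is bounded by $|n_2|\vee|n_4|$ or by the smaller of the two large scales. Once these pointwise multiplier bounds are in hand, combined with the resonance identity for $\Phi$, the remaining arguments are the routine dyadic summation and Cauchy--Schwarz manipulations, with the only subtlety being to keep the implicit constants independent of the dyadic parameters so that the final sum over scales converges (using $s>1/2$ and the genuine power of $\LR\Phi$).
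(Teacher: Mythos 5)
Your overall architecture (pointwise multiplier bounds, dyadic decomposition, trading powers of $\LR{\Phi}$ for derivatives, Cauchy--Schwarz plus $\sum\LR{n_j}^{-2s}<\infty$) matches the shape of the paper's argument, and your observations that $2+\tfrac{n}{n_{123}},\,1+\tfrac{n}{n_{123}},\,\tfrac{n}{n_{123}}$ are $O(1)$ and that $\tfrac{n_{23}n_{45}}{n_{2345}}\sim n_{23}\wedge n_{45}$ are exactly what the paper uses. But the central analytic claim on which everything else in your plan rests --- a pointwise lower bound $|\Phi|\gtrsim N_{(1)}\cdot(\text{second largest among }|n_1|,|n_3|,|n_5|)$ on the support of $\hat m_i$ --- is false, and no cutoff of the form entering $\Sc{A}_1,\Sc{A}_2$ can rescue it. Concretely, for $\hat m_1$ take $n_2=n_4=0$, $n_1=n_3=K$, $n_5=-K+1$: then $n=K+1>0$, $n_{45}=-K+1<0$, the tuple lies outside $\Sc{A}_1$ and satisfies $|\Phi|>|n_2|^2\vee|n_4|^2$, yet $\Phi=2n_{15}n_{35}=2$ while all of $n_1,n_3,n_5$ have size $K$. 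Such near-resonant high--high--high configurations (two of $n_1,n_3,n_5$ almost cancelling) survive every cutoff and carry the full multiplier $|n_{45}|\sim K$ and output weight $\LR{n}^s\sim K^s$; since your scheme extracts decay only from the (false) pointwise bound on $|\Phi|$, its fallback is $\LR{\Phi}\ge1$, and summing over the $\sim K^2$ triples with $n_1,n_3,n_5\sim K$ for fixed $n$ then gives a contribution $\sim K^{4-4s}$, divergent for $s<1$. So the argument cannot close on the stated range $s>1/2$.

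What the paper uses instead, and what is entirely missing from your proposal, is a level-set counting argument: after Cauchy--Schwarz the half-modulation estimates reduce to bounding $\sum_{\mu}\LR{\mu}^{-1}\#\{(n_1,n_3,n_5):\Phi=\mu,\dots\}$, and the key input (Lemma~\ref{lem:number}, applied through the explicit factorizations \eqref{Phi-5} and \eqref{Phi-6} of $\Phi$ as $2n_{15}n_{35}+n^2-p^2$, $-2n_{13}n_{15}+n^2+p^2$, or a positive definite binary quadratic form) is that each level set of $\Phi$ restricted to a box of side $R$ contains only $O_\e(R^\e)$ points, by divisor counting and Jarn\'ik's theorem. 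This is what renders the $\LR{\Phi}^{-1/2}$ weight summable precisely in the regimes where your pointwise bound fails; a genuine pointwise lower bound $|\Phi|\gec |n||n_5|$ or $|\Phi|\gec|n_3||n_5|$ is available only in the genuinely high--low cases ([1.3] of Lemma~\ref{lem:1}, [1.2] of Lemma~\ref{lem:2}, and parts of [2]). Two smaller inaccuracies: the support of $\hat{\hat m}$ also contains the region $(n_l)\notin\Sc{A}$, $|\Phi|\le|n_2|^2\vee|n_4|^2$, which your sketch omits (the paper handles it by inserting $(\LR{n_2}+\LR{n_4})/\LR{\Phi}^{1/2}\ge1$ and falling back on the half-modulation estimate, which is also how \eqref{est:matome-2} is deduced); and membership in $\Sc{A}_1$ does not force $|n_2|\vee|n_4|$ to be comparable to the top frequency --- its third and fourth components only give $|n_{24}|\ge\eta|n_{13}|$ or $|n_{24}|\ge\eta|n_{15}|$ with $n_{13},n_{15}$ possibly tiny, and those cases require the separate almost-orthogonality argument of Cases [0.3]--[0.4] rather than a crude derivative transfer onto $W_2$ or $W_4$.
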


Now, we deduce the required estimates of multilinear forms from the quintilinear estimates given in Proposition~\ref{prop:fundamental}.
\begin{prop}\label{prop}
Let $1/2<s\le 1$.

(i) There exists $0<\de <1/2$ depending on $s$ such that the following holds.
Let $u\in C_TH^s$ be a solution of \eqref{mBO'} in the sense of distribution, and define the solution $\om \in C_T\ell ^2_s$ of \eqref{eq_om1} by \eqref{utov} and \eqref{vtoom}.
Then, there exists $C_0>0$ depending on $s$, $\tnorm{u}{C_TH^s}$ (and $\tnorm{\om}{C_T\ell ^2_s}$)%
\footnote{Note that $\norm{\om}{C_T\ell ^2_s}=\norm{v}{C_TH^s}$, which is controlled by $\norm{u}{C_TH^s}$ using Lemma~\ref{lem:exp}.} 
such that for any $J\in \Bo{N}$ we have
\eqq{\norm{\Sc{R}^{(0)}[u,\om ]}{C_T\ell ^2_s}+\norm{\Sc{N}^{(1)}[u,\om ]}{C_T\ell ^2_{s-1}}&\le C_0,\\
\norm{\Sc{N}^{(J)}_R[u,\om ]}{C_T\ell ^2_s}&\le M\big( C_0M^{-1/2}\big) ^J,\\
\norm{\Sc{N}^{(J)}_0[u,\om ]}{C_T\ell ^2_s}&\le \big( C_0M^{-1/2}\big) ^J,\\
\norm{\Sc{N}^{(J)}_1[u,\om ]}{C_T\ell ^2_s}&\le C_0M^{1/2}\big( C_0M^{-1/2}\big) ^J,\\
\norm{\Sc{R}^{(J)}[u,\om ]}{C_T\ell ^2_s}&\le C_0\big( C_0M^{-1/2}\big) ^J,\\
\norm{\Sc{N}^{(J+1)}[u,\om ]}{C_T\ell ^2_{s-1}}&\le C_0\big( C_0M^{-\de}\big) ^J.
}

(ii) Moreover, if $\ti{u}\in C_TH^s$ is another solution of \eqref{mBO'} and $\ti{\om}\in C_T\ell ^2_s$ is the corresponding solution of \eqref{eq_om1}, then there exists $\ti{C}_0>0$ depending on $s$, $\tnorm{u}{C_TH^s}$, $\tnorm{\ti{u}}{C_TH^s}$ (and $\tnorm{\om}{C_T\ell ^2_s}$, $\tnorm{\ti{\om}}{C_T\ell ^2_s}$) such that we have
\eqq{\norm{\Sc{R}^{(0)}[u,\om ]-\Sc{R}^{(0)}[\ti{u},\ti{\om}]}{C_T\ell ^2_s}&\le \ti{C}_0\big[
\norm{u-\ti{u}}{C_TH^s}+\norm{\om -\ti{\om}}{C_T\ell ^2_s}\big] ,\\
\norm{\Sc{N}^{(J)}_R[u,\om ]-\Sc{N}^{(J)}_R[\ti{u},\ti{\om}]}{C_T\ell ^2_s}&\le M\big( \ti{C}_0M^{-1/2}\big) ^J\big[
\norm{u-\ti{u}}{C_TH^s}+\norm{\om -\ti{\om}}{C_T\ell ^2_s}\big] ,\\
\norm{\Sc{N}^{(J)}_0[u,\om ]\!-\!\Sc{N}^{(J)}_0[\ti{u},\ti{\om}]}{C_T\ell ^2_s}&\le \big( \ti{C}_0M^{-1/2}\big) ^J\big[
\norm{u-\ti{u}}{C_TH^s}+\norm{\om -\ti{\om}}{C_T\ell ^2_s}\big] ,\\
\norm{\Sc{N}^{(J)}_1[u,\om ]-\Sc{N}^{(J)}_1[\ti{u},\ti{\om}]}{C_T\ell ^2_s}&\le \ti{C}_0M^{1/2}\big( \ti{C}_0M^{-1/2}\big) ^J\big[
\norm{u-\ti{u}}{C_TH^s}+\norm{\om -\ti{\om}}{C_T\ell ^2_s}\big] ,\\
\norm{\Sc{R}^{(J)}[u,\om ]\!-\!\Sc{R}^{(J)}[\ti{u},\ti{\om}]}{C_T\ell ^2_s}&\le \ti{C}_0\big( \ti{C}_0M^{-1/2}\big) ^J\big[
\norm{u-\ti{u}}{C_TH^s}+\norm{\om -\ti{\om}}{C_T\ell ^2_s}\big] .
}
\end{prop}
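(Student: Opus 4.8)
The proof is purely a matter of multilinear estimates (the legitimacy of the normal form reduction itself being handled separately in Subsection~\ref{sec:proof}), and the plan is to bound each of $\Sc{R}^{(0)},\Sc{N}^{(1)},\Sc{N}^{(J)}_R,\Sc{N}^{(J)}_0,\Sc{N}^{(J)}_1,\Sc{R}^{(J)},\Sc{N}^{(J+1)}$ by a $J$- (or $(J+1)$-) fold iteration of the fundamental quintilinear estimates of Proposition~\ref{prop:fundamental}, checking that the factorially large combinatorial factor $\#\mathfrak{U}(J)=7^J\prod _{j=1}^J(2j-1)$ is swallowed by the super-exponentially small gain coming from the phase-threshold inequalities \eqref{est:Phi1}--\eqref{est:Phi2'}. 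First I would dispose of the base quantities. For $\Sc{R}^{(0)}[u,\om ]=\sum _{i=1}^7\Sc{Q}_i(e^{it\Phi}\hat{\hat{m}}_i;u,\om )+e^{itn|n|}\F [R[u]]$, the first term is estimated in $\ell ^2_s$ by \eqref{est:matome-0} together with Lemma~\ref{lem:exp} (which gives $X_s(u)\lec 1+\norm{u}{C_TH^s}^2$ and $X_{s+1}(u)\lec 1+\norm{u}{C_TH^s}^4$), and the second by \eqref{est:Ru}; hence $\norm{\Sc{R}^{(0)}}{C_T\ell ^2_s}\le C_0$. For $\Sc{N}^{(1)}$ I would not invoke Proposition~\ref{prop:fundamental}: since $\Sc{N}^{(1)}+\Sc{R}^{(0)}$ is the Fourier transform (with the $e^{itn|n|}$ factor) of the whole right-hand side of \eqref{eq:v}, each of whose terms lies in $C_TH^{s-1}$ with norm polynomial in $\norm{u}{C_TH^s}$ by \eqref{prod-est1}, \eqref{prod-est2} and Lemma~\ref{lem:exp}, one gets $\norm{\Sc{N}^{(1)}}{C_T\ell ^2_{s-1}}\le C_0$; similarly $\norm{\p _t\F [e^{ikF[u]}]}{C_T\ell ^2_{s-1}}=\norm{k(\p _tF[u])e^{ikF[u]}}{C_TH^{s-1}}$ is polynomial in $\norm{u}{C_TH^s}$ (using the formula for $\p _tF[u]$ derived in Section~\ref{sec:gauge}, the product estimates, Lemma~\ref{lem:exp}, and the Sobolev law $H^{s-1}\cdot H^{s+1}\hookrightarrow H^{s-1}$), which will be needed for $\Sc{N}^{(J)}_1$. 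The starred objects $\hat{m}^*_i,\om ^*$ are treated identically since $\norm{\om ^*}{\ell ^2_s}=\norm{\om}{\ell ^2_s}$ and Proposition~\ref{prop:fundamental} covers the $\hat{m}^*_i$.

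Next, fix $(\mathbf{i},\Sc{T})\in \mathfrak{U}(J)$, pass to absolute values inside the $\mathbf{n}$-sum (so that Proposition~\ref{prop:fundamental}, stated for non-negative sequences, applies), and use \eqref{est:Phi1}--\eqref{est:Phi2'} to pull out in front a scalar factor $C^J\prod _{j\in S}(2^{j-1}M)^{-\theta}$ while leaving inside $\prod _{j=1}^J\LR{\Phi _j}^{-\rho _j}$, where $\rho _j$ is at least the power consumed at parent $a^j$ by the fundamental estimate used there, and where $\theta =1/2$ with $S=\{1,\dots ,J\}$ for $\Sc{N}^{(J)}_0,\Sc{R}^{(J)}$, $S=\{1,\dots ,J-1\}$ for $\Sc{N}^{(J)}_1$, $S=\{1,\dots ,J-2\}$ for $\Sc{N}^{(J)}_R$ (which by \eqref{est:Phi1} carries one fewer denominator), and $\theta =\de$ with $S=\{1,\dots ,J\}$ for $\Sc{N}^{(J+1)}$. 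One then estimates the resulting $(4J+1)$- (resp.\ $(4J+5)$-) linear form by peeling off one parent at a time from the root: for $\Sc{N}^{(J)}_R,\Sc{N}^{(J)}_0,\Sc{R}^{(J)}$, estimated in $\ell ^2_s$, each parent is handled by \eqref{est:matome-1}, which replaces its three $\om$-children by the $\ell ^2_s$-norms of the corresponding subtrees (recursion) and its two $W$-children by $X_s(u)$; for $\Sc{N}^{(J)}_1$ the one parent whose second or fourth child carries the $\p _t\F[e^{ikF[u]}]$-factor (which lies only in $\ell ^2_{s-1}$) is instead handled by \eqref{est:matome-2}, which consumes the full $\LR{\Phi}^{-1}$ available there because we did not extract the $M$-gain at that parent — whence the single extra factor $M^{1/2}$; and for $\Sc{N}^{(J+1)}$, estimated in $\ell ^2_{s-1}$, the deepest parent $a^{J+1}$, which by maximality in the tree order has only leaf children and a multiplier with no $\LR{\Phi}$-denominator, is not peeled but recognised as $\Sc{N}^{(1)}$ evaluated at $n_{a^{J+1}}$ (so its $\ell ^2_{s-1}$-norm is $\le C_0$), and since every non-root parent is a first/third/fifth child of its parent, $a^{J+1}$ sits on a chain of $\om$-links up to the root, along which one applies \eqref{est:matome-3} (consuming $\LR{\Phi}^{-(1-\de)}$) so that the $\ell ^2_{s-1}$-slot is passed downward, the side subtrees being handled in $\ell ^2_s$ by \eqref{est:matome-1}. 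After the peeling the $\om$-leaves contribute $\norm{\om}{C_T\ell ^2_s}$, or $\norm{\Sc{R}^{(0)}}{C_T\ell ^2_s}\le C_0$ for the distinguished slot of $\Sc{R}^{(J)}$.

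Summing over $\mathbf{n}$, over the $\le 4J+1$ choices of the distinguished parent/leaf (for $\Sc{N}^{(J)}_1$ and $\Sc{R}^{(J)}$), and over $(\mathbf{i},\Sc{T})\in \mathfrak{U}(J)$, and bounding $X_s(u),X_{s+1}(u)$ by Lemma~\ref{lem:exp} and the $\om$-factors by $\norm{\om}{C_T\ell ^2_s}=\norm{v}{C_TH^s}$, one arrives at
\eqq{\norm{\Sc{N}^{(J)}_\bullet [u,\om ]}{C_T\ell ^2_s}\lec \#\mathfrak{U}(J)\cdot C^J\cdot M^{-\theta \# S}2^{-\theta J(J-1)/2}\cdot \big( 1+\norm{u}{C_TH^s}\big) ^{O(J)}\norm{v}{C_TH^s}^{O(J)},}
and the claimed geometric bounds follow because $\prod _{j=1}^J(2j-1)\cdot 2^{-\theta J(J-1)/2}\le C_\theta ^J$ for each fixed $\theta >0$ (the factorial growth is dominated by the Gaussian decay), so that every $J$-dependent sub-exponential factor, the $7^J$, the $C^J$ and the fixed powers of $1+\norm{u}{C_TH^s}$ are absorbed into a single $C_0$ depending only on $s$ and $\norm{u}{C_TH^s}$ (hence on $\norm{\om}{C_T\ell ^2_s}$); the residual $M$-power $-\theta\#S$ is $-J/2$ for $\Sc{N}^{(J)}_0,\Sc{R}^{(J)}$, $\tfrac12-\tfrac J2$ for $\Sc{N}^{(J)}_1$, $1-\tfrac J2$ for $\Sc{N}^{(J)}_R$, and $-J\de$ for $\Sc{N}^{(J+1)}$, matching the statement. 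Part~(ii) follows by running the same scheme after expanding each difference via the telescoping identity $\prod A_\ell -\prod \ti{A}_\ell =\sum _\ell (\prod _{\ell '<\ell }A_{\ell '})(A_\ell -\ti{A}_\ell )(\prod _{\ell '>\ell }\ti{A}_{\ell '})$: a difference $\F [e^{ikF[u]}]-\F [e^{ikF[\ti{u}]}]$ of a $W$-slot is controlled in $\ell ^2_s$, $\ell ^2_{s+1}$ (or, after $\p _t$, in $\ell ^2_{s-1}$) by Lemma~\ref{lem:exp} (resp.\ its $\p _t$-analogue) by a polynomial in $\norm{u}{C_TH^s}+\norm{\ti{u}}{C_TH^s}$ times $\norm{u-\ti{u}}{C_TH^s}$, a difference $\om _b-\ti{\om}_b$ by $\norm{\om -\ti{\om}}{C_T\ell ^2_s}$, and, for $\Sc{R}^{(J)}$, the difference $\Sc{R}^{(0)}[u,\om ]-\Sc{R}^{(0)}[\ti{u},\ti{\om}]$ by the first estimate of part~(ii) itself (proved together with the $\Sc{R}^{(0)}$-bound above, using \eqref{est:matome-0} and \eqref{est:Ru}); the $O(J)$ telescoping terms are again sub-exponential and absorbed.

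The hard part is expected to be the bookkeeping of the iteration rather than any individual estimate: one must verify that, after the extraction \eqref{est:Phi1}--\eqref{est:Phi2'}, the power of $\LR{\Phi _j}$ still available at each parent is at least what the fundamental estimate invoked there consumes, while the output regularity ($\ell ^2_s$ versus $\ell ^2_{s-1}$) propagates consistently along every branch. The two anomalies require particular care: the single $\ell ^2_{s-1}$-valued $W$-slot in $\Sc{N}^{(J)}_1$, which forces the use of \eqref{est:matome-2} at its parent and — since \eqref{est:Phi2'} guarantees only $\LR{\Phi}^{-1/2}$ there — costs the extra factor $M^{1/2}$; and the undifferenced, leaf-fed multiplier at the bottom of $\Sc{N}^{(J+1)}$, which must be treated as a copy of $\Sc{N}^{(1)}$ and whose $\ell ^2_{s-1}$-output must be routed down the $\om$-chain using \eqref{est:matome-3}. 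One must also check quantitatively that $\#\mathfrak{U}(J)$ really is dominated by $2^{-\theta J(J-1)/2}$.
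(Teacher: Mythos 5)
Your proposal is correct and follows essentially the same route as the paper: the base bounds on $\Sc{R}^{(0)}$ and $\Sc{N}^{(1)}$ via \eqref{est:matome-0}, \eqref{est:Ru} and the product estimate, the extraction of the phase factors \eqref{est:Phi1}--\eqref{est:Phi2'} followed by $J$-fold iteration of \eqref{est:matome-1} (with \eqref{est:matome-2} at the single $\p_tW$-parent of $\Sc{N}^{(J)}_1$, costing the extra $M^{1/2}$, and \eqref{est:matome-3} along the chain from $a^{J+1}$ to the root for $\Sc{N}^{(J+1)}$), the absorption of $\#\mathfrak{U}(J)$ by the super-exponential gain $\prod_j(2^{j-1}M)^{-\theta}$, and telescoping for part~(ii) all match the paper's argument. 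The only cosmetic difference is that you obtain the $\Sc{N}^{(1)}$ bound by subtracting $\Sc{R}^{(0)}$ from the full right-hand side of \eqref{eq:v}, whereas the paper proves the absolute-value estimate \eqref{est:N1} directly from \eqref{prod-est1} --- a version you in any case still need for the undifferenced multiplier at the bottom of $\Sc{N}^{(J+1)}$.
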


\begin{proof}
(i) 
\emph{Estimate on $\Sc{R}^{(0)}$}.
We have
\eqq{\norm{\Sc{R}^{(0)}[u,\om ]}{\ell ^2_s}\le \sum _{i=1}^7\norm{\Sc{Q}\big( e^{it\Phi}\hat{\hat{m}}_i;\;u(t),\om (t)\big)}{\ell ^2_s}+\norm{R[u]}{H^s}.}
By \eqref{est:matome-0}, the first term in the right-hand side is bounded by $X_s(u)X_{s+1}(u)\tnorm{\om}{\ell ^2_s}^3$, which is controlled in terms of $\norm{u}{C_TH^s}$ by Lemma~\ref{lem:exp}.
The second term has been estimated in \eqref{est:Ru}.

\emph{Estimate on $\Sc{N}^{(1)}$}.
The product estimate \eqref{prod-est1} easily implies that
\eq{est:N1}{\norm{\sum _{n=n_{12345}}|\hat{m}|\om _1(n_1)W_2(n_2)\om _3(n_3)W_4(n_4)\om _5(n_5)}{\ell ^2_{s-1}}\lec \norm{\om _1}{\ell ^2_s}\norm{W_2}{\ell ^2_s}\norm{\om _3}{\ell ^2_s}\norm{W_4}{\ell ^2_s}\norm{\om _5}{\ell ^2_s},}
where $\hat{m}$ is one of $\shugo{\hat{m}_i,\hat{m}_i^*;\,1\le i\le 7}$.
This and Lemma~\ref{lem:exp} verifies the estimate on  $\Sc{N}^{(1)}$.

\emph{Estimate on $\Sc{N}_R^{(J)}$}.
By invoking \eqref{est:Phi1} and applying \eqref{est:matome-1} $J$ times, we have
\eqq{&\norm{\Sc{N}^{(J)}_R[u,\om ]}{\ell ^2_s}\\
&\le C^J\sum _{(\mathbf{i},\Sc{T})\in \mathfrak{U}(J)}\bigg\| \sum _{\mat{\mathbf{n}\in \mathfrak{N}(\Sc{T})\\ n_{\text{root}}=n}}\frac{\prod _{j=1}^J\big| \hat{m}_{i_j}\big( n_{a^j},(n_{a^j_l})_{l=1}^5\big) \big|}{\prod _{j=1}^{J-2}(2^{j-1}M)^{1/2}\prod _{j=1}^{J}\LR{\Phi _j}^{1/2}}\prod _{a\in \Sc{T}^\I _1}\big| W_a(n_a)\big| \prod _{b\in \Sc{T}^\I _2}\big| \om _b(n_b)\big| \bigg\| _{\ell ^2_s}\\
&\le C^JM^{-(J-2)/2}\sup _{(\mathbf{i},\Sc{T})\in \mathfrak{U}(J)}\bigg\| \sum _{\mat{\mathbf{n}\in \mathfrak{N}(\Sc{T})\\ n_{\text{root}}=n}}\prod _{j=1}^J\frac{\big| \hat{m}_{i_j}\big( n_{a^j},(n_{a^j_l})_{l=1}^5\big) \big|}{\LR{\Phi _j}^{1/2}}\prod _{a\in \Sc{T}^\I _1}\big| W_a(n_a)\big| \prod _{b\in \Sc{T}^\I _2}\big| \om _b(n_b)\big| \bigg\| _{\ell ^2_s}\\
&\le C^JM^{-(J-2)/2}X_s(u)^{2J}\tnorm{\om}{\ell ^2_s}^{2J+1}
}
for $J\ge 2$, where we have used the fact that for $p>1$ there exists $C>0$ such that $\# \mathfrak{U}(J)=7^J\prod _{j=1}^J(2j-1)\le C^J\prod _{j=1}^{J-2}p^{j-1}$, $J\ge 2$.
A similar argument gives the estimate for $J=1$.
Then, Lemma~\ref{lem:exp} implies the desired estimate.

\emph{Estimate on $\Sc{N}_0^{(J)}$} follows in the same manner, by use of \eqref{est:Phi2'} instead of \eqref{est:Phi1}.

\emph{Estimate on $\Sc{N}_1^{(J)}$}.
We note that
\eqq{\p _t\bigg[ \prod _{a\in \Sc{T}^\I _1}W_a\bigg] =\sum _{j_*=1}^J\Big( W_{a^{j_*}_2}\p _tW_{a^{j_*}_4}+W_{a^{j_*}_4}\p _tW_{a^{j_*}_2}\Big) \prod _{\mat{j=1\\ j\neq j_*}}^JW_{a^{j}_2}W_{a^{j}_4}.}
Similarly to \eqref{est:Phi2'}, for each $j_*$ we have
\eqq{\left| \frac{e^{it\ti{\Phi}_J}\prod_{j=1}^J\hat{m}_{i_j}\big( n_{a^j},(n_{a^j_l})_{l=1}^5\big)}{\prod _{j=1}^{J}\ti{\Phi}_j}\right| \lec \frac{\prod_{j=1}^J\big| \hat{m}_{i_j}\big( n_{a^j},(n_{a^j_l})_{l=1}^5\big) \big|}{\prod _{j=1}^{J-1}(2^{j-1}M)^{1/2}\LR{\Phi _{j_*}}\prod _{j=1,\,j\neq j_*}^{J}\LR{\Phi _j}^{1/2}}.}
By these facts and \eqref{est:matome-1} as well as \eqref{est:matome-2}, we have
\eqq{&\norm{\Sc{N}^{(J)}_1[u,\om ]}{\ell ^2_s}\le C^JM^{-(J-1)/2}\max _{k\in \shugo{\pm 1,\pm 3}}\norm{\p _te^{ikF[u]}}{H^{s-1}}X_s(u)^{2J-1}\tnorm{\om}{\ell ^2_s}^{2J+1}.}
Since $u\in C_TH^s$ satisfies
\eq{eq:W}{\p _te^{ikF[u]}=ike^{ikF[u]}\Big( -2P_{\neq c}(u\H \p _xu)-2iB(u,u)+\sgm P_{\neq c}\big[ \big( P_{\neq c}(u^2)\big) ^2\big] \Big)}
(which is verified in the sense of distribution or in $C_TH^{s-1}$), by \eqref{prod-est1} and \eqref{prod-est2} we have
\eqq{\max _{k\in \shugo{\pm 1,\pm 3}}\norm{\p _te^{ikF[u]}}{H^{s-1}}&\lec X_s(u)\LR{\tnorm{u}{H^s}}^4.}
Hence, the claim follows from Lemma~\ref{lem:exp}.

\emph{Estimate on $\Sc{R}^{(J)}$}.
From the estimate on $\Sc{N}^{(J)}_0$, we have
\eqq{\norm{\Sc{R}^{(J)}[u,\om ]}{\ell ^2_s}&\le C^JM^{-J/2}X_s(u)^{2J}\tnorm{\om}{\ell ^2_s}^{2J}\norm{\Sc{R}^{(0)}[u,\om ]}{\ell ^2_s}.}
The claim then follows from the estimate of $\Sc{R}^{(0)}$ shown above and Lemma~\ref{lem:exp}.

\emph{Estimate on $\Sc{N}^{(J+1)}$}.
The estimates \eqref{est:matome-3}, \eqref{est:matome-1} combined with \eqref{est:Phi2} and \eqref{est:N1} yield that
\eqq{\norm{\Sc{N}^{(J+1)}[u,\om ]}{\ell ^2_{s-1}}&\le C^{J+1}M^{-J\de}X_s(u)^{2J+2}\tnorm{\om}{\ell ^2_s}^{2J+3}.}
In the above estimates, for each $\Sc{T}\in \mathfrak{T}(J+1)$ we have applied \eqref{est:matome-3} and \eqref{est:matome-1} $J$ times in total, and the number of application of \eqref{est:matome-3} is equal to the number of elements $a\in \Sc{T}$ such that $a^1\succeq a\succeq a^{J+1}$ and $a\neq a^{J+1}$.
The desired estimate is obtained by Lemma~\ref{lem:exp}.

(ii) 
A similar argument verifies the difference estimates.
In fact, we may just replace $X_s(u)$, $X_{s+1}(u)$ and $\norm{\om}{\ell ^2_s}$ by $X_s(u)+X_s(\ti{u})$, $X_{s+1}(u)+X_{s+1}(\ti{u})$ and $\norm{\om}{\ell ^2_s}+\norm{\ti{\om}}{\ell ^2_s}$, with one exception which is replaced by $Y_s(u,\ti{u})$, $Y_{s+1}(u,\ti{u})$ and $\norm{\om -\ti{\om}}{\ell ^2_s}$, respectively.
For the estimate on the difference of $\Sc{N}^{(J)}_1$, we note that
\eqq{&\max _{k\in \shugo{\pm 1,\pm 3}}\norm{\p _t\big( e^{ikF[u]}-e^{ikF[\ti{u}]}\big)}{H^{s-1}}\\
&\hx \lec Y_s(u,\ti{u})\LR{\tnorm{u}{H^s}+\tnorm{\ti{u}}{H^s}}^4+\big( X_s(u)+X_s(\ti{u})\big) \LR{\tnorm{u}{H^s}+\tnorm{\ti{u}}{H^s}}^3\tnorm{u-\ti{u}}{H^s}.}
Then, the desired difference estimates are shown by Lemma~\ref{lem:exp}.
\end{proof}


%
\smallskip
\subsection{Proof of Theorem~\ref{thm}}\label{sec:proof}

Let $1/2<s\le 1$ and $u,\ti{u}\in C_TH^s$ be two solutions (in the sense of distribution) of \eqref{mBO'} with a common initial datum at $t=0$.
Define the corresponding solutions $\om ,\ti{\om}\in C_T\ell ^2_s$ of \eqref{eq_om} by \eqref{utov} and \eqref{vtoom}.
By translation in time (if necessary), it suffices to prove $u(t)=\ti{u}(t)$ on $[0,T_*]$ for some $0<T_*\le T$.

Let us first justify computations of each normal form reduction step, following the argument in \cite[Section~5]{GKO13}.
We only see the first normal form reduction step to show the idea; then the general steps are justified in the same manner.
Let $W(t,n):=\F [e^{ikF[u(t)]}](n)$ for some $k\in \{ \pm 1,\pm 3\}$.
As seen above, $\p _t\om ,\p _tW\in C_T\ell ^2_{s-1}$ by the equations \eqref{eq_om}, \eqref{eq:W} and hence $\om (\cdot ,n),W(\cdot ,n)\in C^1([0,T])$ for each $n\in \Bo{Z}$, which justifies application of the product rule:
\eqq{&\p _t\bigg[ \frac{e^{it\Phi}}{i\Phi}\hat{m}_i\big( n,n_1,\dots ,n_5\big) \om _1(n_1)W_2(n_2)\om _3(n_3)W_4(n_4)\om _5(n_5)\bigg] \\
&=e^{it\Phi}\hat{m}_i\big( n,n_1,\dots ,n_5\big) \om _1(n_1)W_2(n_2)\om _3(n_3)W_4(n_4)\om _5(n_5)\\
&\quad +\frac{e^{it\Phi}}{i\Phi}\hat{m}_i\big( n,n_1,\dots ,n_5\big) (\p _t\om _1)(n_1)W_2(n_2)\om _3(n_3)W_4(n_4)\om _5(n_5)\\
&\quad +\cdots +\frac{e^{it\Phi}}{i\Phi}\hat{m}_i\big( n,n_1,\dots ,n_5\big) \om _1(n_1)W_2(n_2)\om _3(n_3)W_4(n_4)(\p _t\om _5)(n_5)}
for each $n,n_1,\dots ,n_5\in \Bo{Z}$.
Each term on the right-hand side is absolutely and uniformly in $t\in [0,T]$ summable over $\{ (n_1,\dots ,n_5):n=n_{12345},\,|\Phi |>M\}$ for each $n\in \Bo{Z}$, which is a consequence of (the proof of) the estimates on $\Sc{N}^{(J)},\Sc{N}^{(J+1)}$ in $C_T\ell ^2_{s-1}$ and those on $\Sc{N}^{(J)}_1,\Sc{R}^{(J)}$ in $C_T\ell ^2_s$ given in Proposition~\ref{prop}.
(This justifies the substitution of the equation \eqref{eq_om1} into $\Sc{N}^{(J)}_2$, since the summations after substitution are shown to be absolutely convergent.)
Moreover, the function on the left-hand side before differentiated in $t$ is also absolutely summable by the estimate on $\Sc{N}^{(J)}_0$ in $C_T\ell ^2_s$.
As a result, we can switch the summation and the differentiation in $t$ in the classical sense:
\eqq{
&\p _t\bigg[ \sum _{\mat{n=n_{12345}\\ |\Phi |>M}}\frac{e^{it\Phi}}{\Phi}\hat{m}_i\big( n,n_1,\dots ,n_5\big) \om _1(n_1)W_2(n_2)\om _3(n_3)W_4(n_4)\om _5(n_5)\bigg] \\
&=\sum _{\mat{n=n_{12345}\\  |\Phi |>M}}\p _t\bigg[ \frac{e^{it\Phi}}{\Phi}\hat{m}_i\big( n,n_1,\dots ,n_5\big) \om _1(n_1)W_2(n_2)\om _3(n_3)W_4(n_4)\om _5(n_5)\bigg]
}
for each $n\in \Bo{Z}$.
In such a way, the normal form reduction steps are justified for general solutions $u\in C_TH^s$ and $\om \in C_T\ell ^2_s$.

We next show that $\om$ satisfies the equation
\eq{eq_om5}{\om (t,n)&=\om (0,n)+\sum _{j=1}^\I \Big[ \Sc{N}^{(j)}_0[u,\om ](t,n)-\Sc{N}^{(j)}_0[u,\om ](0,n)\Big] \\[-5pt]
&\hx +\sum _{j=1}^\I \int _0^t\Big[ \Sc{N}^{(j)}_R[u,\om ](\tau ,n)+\Sc{N}^{(j)}_1[u,\om ](\tau ,n)\Big] \,d\tau +\sum _{j=0}^\I \int _0^t\Sc{R}^{(j)}[u,\om ] (\tau ,n)\,d\tau ,}
which is the limit of \eqref{eq_om4} as $J\to \I$.
By virtue of Proposition~\ref{prop} (i), the sequences (in $J$)
\eqs{\sum _{j=1}^J\Big[ \Sc{N}^{(j)}_0[u,\om ](t,n)-\Sc{N}^{(j)}_0[u,\om ](0,n)\Big] ,\\[-5pt]
\sum _{j=1}^J \int _0^t\Big[ \Sc{N}^{(j)}_R[u,\om ](\tau ,n)+\Sc{N}^{(j)}_1[u,\om ](\tau ,n)\Big] \,d\tau ,\qquad \sum _{j=0}^J\int _0^t\Sc{R}^{(j)}[u,\om ] (\tau ,n)\,d\tau}
are convergent in $C_T\ell ^2_s$ and 
\eqq{\int _0^t\Sc{N}^{(J+1)}[u,\om ] (\tau ,n)\,d\tau \to 0}
in $C_T\ell ^2_{s-1}$, if we choose $M>1$ such that $C_0M^{-\de}<1$.
Therefore, we can take the limit $J\to \I$ in $C_T\ell ^2_{s-1}$ and \eqref{eq_om5} is verified.
Note that \eqref{eq_om5} now holds in $C_T\ell ^2_s$ since the last term $\int _0^t\Sc{N}^{(J+1)}$ of less regularity has been eliminated in the limit equation.

The difference $\om -\ti{\om}$ thus satisfies the equation
\eqq{&\om (t,n)-\ti{\om}(t,n)\\
&=\sum _{j=1}^\I \Big[ \Big( \Sc{N}^{(j)}_0[u,\om ](t,n)-\Sc{N}^{(j)}_0[\ti{u},\ti{\om}](t,n)\Big) -\Big( \Sc{N}^{(j)}_0[u,\om ](0,n)-\Sc{N}^{(j)}_0[\ti{u},\ti{\om}](0,n)\Big) \Big] \\[-5pt]
&\hx +\sum _{j=1}^\I \int _0^t\Big[ \Big( \Sc{N}^{(j)}_R[u,\om ](\tau ,n)-\Sc{N}^{(j)}_R[\ti{u},\ti{\om}](\tau ,n)\Big) +\Big( \Sc{N}^{(j)}_1[u,\om ](\tau ,n)-\Sc{N}^{(j)}_1[\ti{u},\ti{\om}](\tau ,n)\Big) \Big] \,d\tau \\[-5pt]
&\hx +\sum _{j=0}^\I \int _0^t\Big[ \Sc{R}^{(j)}[u,\om ](\tau ,n)-\Sc{R}^{(j)}[\ti{u},\ti{\om}](\tau ,n)\Big] \,d\tau}
on $[0,T]$.
We apply Proposition~\ref{prop} (ii) and then invoke Lemma~\ref{lem:est-u} to get
\eqq{\norm{\om -\ti{\om}}{C_{T_*}\ell ^2_s}\le \Big\{ \big( 2+T_*M+T_*\ti{C}_0M^{1/2}\big) \frac{\ti{C}_0M^{-1/2}}{1-\ti{C}_0M^{-1/2}}+T_*\ti{C}_0\frac{1}{1-\ti{C}_0M^{-1/2}}\Big\} \\
\times (C+1)\norm{\om-\ti{\om}}{C_{T_*}\ell ^2_s}}
for $0<T_*\le T_0$ and $M>0$ such that $C_0M^{-\de}<1$ and $\ti{C}_0M^{-1/2}<1$, where $C>0$ and $T_0\in (0,T]$ are constants given in  Lemma~\ref{lem:est-u}.
Taking larger $M$ (if necessary) and then choosing $T_*$ sufficiently small, we obtain
\eqq{\tnorm{\om -\ti{\om}}{C_{T_*}\ell ^2_s}\le \frac{1}{2}\tnorm{\om -\ti{\om}}{C_{T_*}\ell ^2_s},}
and therefore $\tnorm{\om -\ti{\om}}{C_{T_*}\ell ^2_s}=0$.
Finally, Lemma~\ref{lem:est-u} concludes that $u(t)=\ti{u}(t)$ on $[0,T_*]$, which establishes Theorem~\ref{thm}.\hfill $\Box$


\bigskip
\section{Proof of fundamental quintilinear estimates}\label{sec:proof-multi}

\smallskip
All we have to do is to prove the fundamental quintilinear estimates given in Proposition~\ref{prop:fundamental}, which is the goal of this section.
First, we further reduce Proposition~\ref{prop:fundamental} to the following lemmas:
\begin{lem}\label{lem:1}
Let $s>1/2$, then there exists $\de >0$ such that we have 
\begin{align}
\begin{split}
&\norm{\sum _{\mat{n=n_{12345},(n_l)\in \Sc{A}_1\\ n>0,n_{45}<0}}n_{45}\om _1(n_1)W_2(n_2)\om _3(n_3)W_4(n_4)\om _5(n_5)}{\ell ^2_s} \\[-10pt]
&\hspace{90pt} \lec \norm{\om _1}{\ell ^2_s}\norm{\om _3}{\ell ^2_s}\norm{\om _5}{\ell ^2_s}\Big( \norm{W_2}{\ell ^2_{s+1}}\norm{W_4}{\ell ^2_{s}}+\norm{W_2}{\ell ^2_{s}}\norm{W_4}{\ell ^2_{s+1}}\Big) ,
\end{split}\label{est:5linear-0}\\[10pt]
\begin{split}
&\norm{\sum _{\mat{n=n_{12345},(n_l)\not\in \Sc{A}_1\\ n>0,n_{45}<0}}\frac{n_{45}}{\LR{\Phi}^{1/2}}\om _1(n_1)W_2(n_2)\om _3(n_3)W_4(n_4)\om _5(n_5)}{\ell ^2_s} \\[-10pt]
&\hspace{200pt} \lec \norm{\om _1}{\ell ^2_s}\norm{W_2}{\ell ^2_s}\norm{\om _3}{\ell ^2_s}\norm{W_4}{\ell ^2_s}\norm{\om _5}{\ell ^2_s},
\end{split}\label{est:5linear-1}\\[10pt]
\begin{split}
&\norm{\sum _{\mat{n=n_{12345},(n_l)\not\in \Sc{A}_1\\ n>0,n_{45}<0}}\frac{n_{45}}{\LR{\Phi}^{1-\de}}\frac{\LR{n_{\max}}}{\LR{n}}\om _1(n_1)W_2(n_2)\om _3(n_3)W_4(n_4)\om _5(n_5)}{\ell ^2_s} \\[-10pt]
&\hspace{200pt} \lec \norm{\om _1}{\ell ^2_s}\norm{W_2}{\ell ^2_s}\norm{\om _3}{\ell ^2_s}\norm{W_4}{\ell ^2_s}\norm{\om _5}{\ell ^2_s}
\end{split}\label{est:5linear-2}
\end{align}
for any real-valued non-negative functions $\om _1,\om _3,\om _5$ and $W_2,W_4$, where $n_{\max}:=\max _{1\le j\le 5}|n_j|$.
\end{lem}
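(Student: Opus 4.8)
The plan is to establish all three estimates by duality together with a Littlewood--Paley (dyadic) decomposition of the inputs, reducing each to a uniform bound for a nonnegative six-linear expression. By duality, each estimate amounts to bounding $\sum_{n=n_{12345}}\LR{n}^s\,\kappa\,\om_1(n_1)W_2(n_2)\om_3(n_3)W_4(n_4)\om_5(n_5)h(n)$ over $h=\{h(n)\}_n$ with $\norm{h}{\ell^2}\le 1$, where $\kappa\ge0$ is the kernel in question ($\kappa=n_{45}$, $n_{45}\LR{\Phi}^{-1/2}$, or $n_{45}\LR{\Phi}^{-(1-\de)}\LR{n_{\max}}\LR{n}^{-1}$ respectively). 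Two facts, both using $s>1/2$, will be invoked repeatedly: (a) $\ell^2_s\hookrightarrow\ell^1$, so any of $\om_1,\om_3,\om_5,W_2,W_4$ can be placed in $\ell^1$ after its weight is moved to the right-hand norm, whereupon a Young/H\"older estimate closes any sum with bounded residual kernel; and (b) if a pair of inputs is localized at a common large dyadic scale $L$ while the output frequency lives at a scale $\ll L$, then the two weights $\LR{\cdot}^s$ supply a factor $\sim L^{-2s}$, which balanced against the residual factors from the kernel and from pairing the two inputs leaves a net negative power of $L$, summable over dyadic blocks --- this is exactly where strict $s>1/2$ enters. A small $\de=\de(s)>0$ is fixed in the course of proving \eqref{est:5linear-2}.

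For \eqref{est:5linear-0} I would split $\Sc{A}_1$ into its four defining pieces. On the degenerate piece $n_{15}n_{35}=0$, say $n_5=-n_1$, the constraint removes one summation, $\sum_{n_1}\om_1(n_1)\om_5(-n_1)$ is handled by Cauchy--Schwarz, and the remaining trilinear sum in $W_2,\om_3,W_4$ --- with $n=n_{234}$, and $n_{45}=n_4-n_1$ absorbed by the surviving $\norm{\om_1}{\ell^2_s}\norm{\om_5}{\ell^2_s}$ and by the $\ell^2_{s+1}$-slot of $W_4$ --- is of type (a). On the piece $|n_2|\vee|n_4|\ge\eta^2(|n_5|\wedge|n|)$ one of $W_2,W_4$ carries a frequency comparable to $|n|$ and $\gtrsim_\eta|n_5|\gtrsim|n_{45}|$, so $\LR{n}^s|n_{45}|\lesssim_\eta\LR{n_2}^{s+1}+\LR{n_4}^{s+1}$ and both the output weight and the derivative are absorbed into the $\ell^2_{s+1}$-slot. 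On the last two pieces, where $|n_{24}|\gtrsim_\eta|n_{13}|$ (resp.\ $|n_{15}|$) even though $|n|$ may be small, the same absorption works via $\LR{n}\lesssim\LR{n_{13}}+|n_{245}|$ (resp.\ $\LR{n_{15}}+|n_{234}|$) together with the corresponding $\eta$-relation; in all four cases (a) finishes the benign remaining sum.

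For \eqref{est:5linear-1} and \eqref{est:5linear-2} the decisive ingredient is a quantitative lower bound for the modulation off $\Sc{A}_1$. There $n_{15},n_{35}\ne0$ and $|n_2|\vee|n_4|<\eta^2(|n_5|\wedge|n|)$, so $n_2,n_4$ are negligible and, with $N:=\max(|n_1|,|n_3|,|n_5|)$, a case check on the signs of $n_1,n_3,n_5$ in $\Phi=n|n|-n_1|n_1|-n_3|n_3|-n_5|n_5|$ shows $|\Phi|\gtrsim N^2$ in the bulk, while near a cancellation (two of $n_1,n_3,n_5$ of size $N$ and of opposite sign, while the remaining one, $n_2,n_4$, and $n$ are at a scale $\mu$ with $1\le\mu\ll N$) one has $|\Phi|\gtrsim N\mu$ --- the configurations where even this fails are precisely the near-resonant ones removed by the defining conditions of $\Sc{A}_1$ (this is the role of its third and fourth clauses together with $n_{15}n_{35}\ne0$). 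Granting this, $n_{45}\LR{\Phi}^{-1/2}\lesssim(N/\mu)^{1/2}$ in the cancellation blocks (and $\lesssim1$ in the bulk), which with fact (b) --- the output frequency having scale $\mu\ll N$ there --- closes \eqref{est:5linear-1}; for \eqref{est:5linear-2} the stronger bulk bound $|\Phi|\gtrsim N^2$ together with small $\de=\de(s)$ absorbs the extra factor $\LR{n_{\max}}/\LR{n}$ (which is $\lesssim1$ outside the cancellation blocks and is controlled inside them by the same dyadic counting).

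The main obstacle will be exactly the near-cancellation / near-resonant sub-region just described: there $|\Phi|$ is only \emph{linear} in the top scale $N$, so the derivative $|n_{45}|\sim N$ is recovered only up to a power $N^{1/2}$, and the estimate closes only because the two $\ell^2_s$ norms of the top-scale inputs, together with the confinement of the output frequency, produce a net power $N^{1-2s}$ (times a subcritical power of the lower scale $\mu$), which is summable over dyadic blocks precisely when $s>1/2$ --- this is also the reason, noted in the introduction, why using only the half-power $\LR{\Phi}^{1/2}$, which the normal form iteration requires, cannot reach $s\le1/2$. A second, more mechanical difficulty is to verify that the sign-and-size case analysis off $\Sc{A}_1$ is genuinely exhaustive and that the above modulation lower bounds hold there uniformly with constants depending only on $\eta$; checking these bounds on the third and fourth clauses of $\Sc{A}_1$ is the most delicate computation of the section.
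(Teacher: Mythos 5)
Your treatment of \eqref{est:5linear-0} follows the same four-way splitting of $\Sc{A}_1$ as the paper and is essentially workable, except that in the clause $|n_2|\vee |n_4|\ge \eta ^2(|n_5|\wedge |n|)$ you must separate the subcase $\eta ^2|n_5|>|n_2|\vee |n_4|\ge \eta ^2|n|$: there $|n_{45}|\sim |n_5|$ is \emph{not} controlled by $\LR{n_2}+\LR{n_4}$, so your inequality $\LR{n}^s|n_{45}|\lec \LR{n_2}^{s+1}+\LR{n_4}^{s+1}$ fails; one instead leaves the derivative on $\om _5$, absorbs only the output weight into $W_2,W_4$, and closes with the product estimate \eqref{prod-est1} at regularity $s-1$.

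The genuine gap is in your plan for \eqref{est:5linear-1}--\eqref{est:5linear-2}. The asserted dichotomy --- $|\Phi |\gec n_{\max}^2$ in the bulk, $|\Phi |\gec n_{\max}\mu$ near a cancellation, with all failures contained in $\Sc{A}_1$ --- is false. Off $\Sc{A}_1$ one only knows $n_{15}n_{35}\neq 0$, and by \eqref{Phi-5} (case $n_3\ge 0$) $\Phi =2n_{15}n_{35}+n^2-n_{135}^2$; taking $n_1=n_3=N$, $n_5=-(N-1)$, $n_2=n_4=0$ gives $n=N+1>0$, $n_{45}=-(N-1)<0$, $(n_l)\notin \Sc{A}_1$, $n_{\max}\sim N$, yet $\Phi =2$. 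All five input frequencies and the output are of size $N$ here, so neither your bulk bound nor your near-cancellation template applies, and $|n_{45}|\LR{\Phi}^{-1/2}\sim N$ with no modulation gain at all. What saves the estimate in this regime is not a pointwise lower bound on $\Phi$ but the \emph{sparseness} of the level sets of $\Phi$: after Cauchy--Schwarz one must bound $\sum _{\mu}\LR{\mu}^{-1}\# \{ (n_1,n_3,n_5):\Phi =\mu ,\dots \}$, and the $O_\e (R^\e )$ lattice-point count on each level set (the divisor-counting/Jarn\'ik statement of Lemma~\ref{lem:number}, applied through the quadratic identities \eqref{Phi-5}) is the ingredient your proposal lacks. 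Without it, the a priori worst case --- order $N^2$ admissible triples all at $|\Phi |=O(1)$ --- yields $N^{4-4s}$ in the Cauchy--Schwarz sum, divergent for $s<1$. The same omission affects \eqref{est:5linear-2}: the pointwise lower bounds that do hold ($|\Phi |\ge |n||n_5|$, $|\Phi |\ge |n_{13}|n_{\max}$, $|\Phi |\gec |n_{15}|n_{\max}$) are available only in specific low-frequency subcases singled out by a finer case analysis, and the remaining regimes again go through the counting lemma rather than through any lower bound on the resonance function.
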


\begin{lem}\label{lem:2}
Let $s>1/2$, then there exists $\de >0$ such that we have 
\begin{align}
\begin{split}
&\norm{\sum _{\mat{n=n_{12345},(n_l)\in \Sc{A}_2\\ n_{23}>0,n_{45}>0}}\frac{n_{23}n_{45}}{n_{2345}}\om _1(n_1)W_2(n_2)\om _3(n_3)W_4(n_4)\om _5(n_5)}{\ell ^2_s} \\[-10pt]
&\hspace{90pt} \lec \norm{\om _1}{\ell ^2_s}\norm{\om _3}{\ell ^2_s}\norm{\om _5}{\ell ^2_s}\Big( \norm{W_2}{\ell ^2_{s+1}}\norm{W_4}{\ell ^2_{s}}+\norm{W_2}{\ell ^2_{s}}\norm{W_4}{\ell ^2_{s+1}}\Big) ,
\end{split}\label{est:6linear-0}\\[10pt]
\begin{split}
&\norm{\sum _{\mat{n=n_{12345},(n_l)\not\in \Sc{A}_2\\ n_{23}>0,n_{45}>0}}\frac{n_{23}n_{45}}{\LR{\Phi}^{1/2}n_{2345}}\om _1(n_1)W_2(n_2)\om _3(n_3)W_4(n_4)\om _5(n_5)}{\ell ^2_s} \\[-10pt]
&\hspace{200pt} \lec \norm{\om _1}{\ell ^2_s}\norm{W_2}{\ell ^2_s}\norm{\om _3}{\ell ^2_s}\norm{W_4}{\ell ^2_s}\norm{\om _5}{\ell ^2_s},
\end{split}\label{est:6linear-1}\\[10pt]
\begin{split}
&\norm{\sum _{\mat{n=n_{12345},(n_l)\not\in \Sc{A}_2\\ n_{23}>0,n_{45}>0}}\frac{n_{23}n_{45}}{\LR{\Phi}^{1-\de}n_{2345}}\frac{\LR{n_{\max}}}{\LR{n}}\om _1(n_1)W_2(n_2)\om _3(n_3)W_4(n_4)\om _5(n_5)}{\ell ^2_s} \\[-10pt]
&\hspace{200pt} \lec \norm{\om _1}{\ell ^2_s}\norm{W_2}{\ell ^2_s}\norm{\om _3}{\ell ^2_s}\norm{W_4}{\ell ^2_s}\norm{\om _5}{\ell ^2_s}
\end{split}\label{est:6linear-2}
\end{align}
for any real-valued non-negative functions $\om _1,\om _3,\om _5$, $W_2,W_4$.
\end{lem}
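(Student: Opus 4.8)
The plan is to prove Lemma~\ref{lem:2} by a dyadic decomposition of the six frequencies $n,n_1,\dots,n_5$ together with a case analysis driven by the signs of $n_1,n_3,n_5$ and by the size of $\Phi$. Write $|n_l|\sim N_l$ (dyadic or zero) and $N_{\max}:=\max_{1\le l\le 5}N_l$. The elementary observation is that on $\{n_{23}n_{45}>0\}$ one has $|m_i|=\big|\frac{n_{23}n_{45}}{n_{2345}}\big|\lec\min(|n_{23}|,|n_{45}|)$, so a full power of the largest frequency is never lost; the estimate is thus subcritical and the only issue is distributing the available gains — the extra weight on $W_2$ or $W_4$ in $\ell^2_{s+1}$ for \eqref{est:6linear-0}, and the $\LR{\Phi}$-smoothing for \eqref{est:6linear-1}--\eqref{est:6linear-2}. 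After reducing to nonnegative functions and pulling out the Sobolev weights, each of the three estimates becomes a bound of the form $\big\|\sum_{n=n_{12345}}\mu(n,\mathbf{n})\,b_1\cdots b_5\big\|_{\ell^2_n}\lec\prod_j\|b_j\|_{\ell^2}$, to be obtained by Cauchy--Schwarz in a carefully chosen subset of the summation variables and Young's inequality for the remaining convolution; for the pieces where $\mu$ is $O(1)$ it is cleanest to invoke the product estimate $\|fg\|_{H^s}\lec\|f\|_{H^s}\|g\|_{H^s}$ ($s>1/2$), which is in the same family as \eqref{prod-est1}.

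For the harmless piece \eqref{est:6linear-0}, i.e.\ the region $\Sc{A}_2$, I would split into its two branches. On $\{|n_2|\vee|n_4|\ge\eta(|n_3|\wedge|n_5|)\}$, say $|n_2|\ge\eta(|n_3|\wedge|n_5|)$, the smaller of $n_{23},n_{45}$ is $\lec|n_2|$, so $|m_6|\lec|n_2|$; this single derivative is absorbed by the extra weight carried by $W_2$ in $\ell^2_{s+1}$, after which the residual $O(1)$-multiplier sum closes by the algebra property. On the degenerate branch $\{n_{13}n_{15}=0\}$, say $n_1=-n_3$, one has one fewer free summation variable: $\om_1$ and $\om_3$ are paired by Cauchy--Schwarz along the diagonal $n_1=-n_3$, the remaining three factors are handled by Young's inequality, and any leftover powers of the large frequency are again eaten by $\ell^2_{s+1}$ on $W_2$ or $W_4$; a short computation shows the resulting geometric sums converge for $s>1/2$.

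The core of the lemma is \eqref{est:6linear-1}--\eqref{est:6linear-2}, i.e.\ the region $\Sc{A}_2^c$, on which $|n_2|\vee|n_4|<\eta(|n_3|\wedge|n_5|)$ and $n_{13},n_{15}\neq0$. The first remark is that the sign constraints $n_{23}>0,\ n_{45}>0$ (resp.\ $<0$ for $m_7$) then force $n_3>0,\ n_5>0$ (resp.\ $<0$), since $|n_2|,|n_4|$ are much smaller than $|n_3|,|n_5|$; by the reflection symmetry $n_l\mapsto-n_l$ relating $m_7$ to $m_6$ it suffices to treat $m_6$. The second remark is the resonance identity: with $n_3,n_5>0$ and $n=n_{135}$ up to an $O\big(\eta(|n_3|\wedge|n_5|)\big)$ correction, one computes $\Phi=2(n_1n_3+n_1n_5+n_3n_5)$ if $n_1\ge0$, $\Phi=2n_{13}n_{15}$ if $n_1<0<n$, and $\Phi=2n_3n_5+2\,|(n_3+n_5)\,n|$ if $n_1<0,\ n<0$. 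In the first and last cases $|\Phi|\gec n_3n_5\gec(n_3\wedge n_5)^2\gec|m_6|^2$, so $|m_6|/\LR{\Phi}^{1/2}\lec1$ and \eqref{est:6linear-1} follows from the algebra property; moreover in those cases $\LR{n}\sim\LR{n_{\max}}$, so the factor $\LR{n_{\max}}/\LR{n}$ in \eqref{est:6linear-2} is $O(1)$ and the same argument, with $\de$ chosen small, gives \eqref{est:6linear-2}.

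The only genuinely delicate region is $\{n_1<0<n\}$, where $\Phi=2n_{13}n_{15}$ (up to the $O(\cdots)$ correction) and the dispersive gain $\LR{\Phi}^{1/2}\gec|m_6|$ fails precisely when $|n_{13}|\wedge|n_{15}|\ll n_3\wedge n_5\sim|m_6|$. In that sub-region $n_1\approx-n_3$ or $n_1\approx-n_5$, say the former; then $|n_1|\sim n_3$ and, since $n_{13}\approx0$, also $\LR{n}\sim\LR{n_5}$, so the Sobolev weight factor $\LR{n}^s\LR{n_1}^{-s}\LR{n_3}^{-s}\LR{n_5}^{-s}\sim\LR{n_1}^{-s}\LR{n_3}^{-s}\sim(n_3\wedge n_5)^{-2s}\lec|m_6|^{-2s}$ more than compensates the single derivative in $m_6$ whenever $s>1/2$. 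The residual summation is then done by fixing $n$, using $|n_{13}|\wedge|n_{15}|\ll n_3\wedge n_5$ to cut down the free indices, pairing $\om_1$ with $\om_3$ (along $n_1+n_3=n_{13}$) by Cauchy--Schwarz and $W_2,W_4$ by Young, which converges for $s>1/2$ with room to spare. For \eqref{est:6linear-2} the factor $\LR{n_{\max}}/\LR{n}$ is $>1$ only when the output frequency is much smaller than $N_{\max}$, in which case $|n_{15}|\sim N_{\max}$ and hence $|\Phi|\gec N_{\max}$ supplies a compensating amount of smoothing, so that a fraction $\de$ of the $\LR{\Phi}$-gain suffices and $\de$ may be fixed small in terms of $s-1/2$. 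The main obstacle, then, is exactly this near-resonant region — all of $n,n_1,n_3$ comparable, no dispersive smoothing available, the estimate closing purely on the Sobolev weight, which is where $s>1/2$ becomes the threshold of this method; the remaining work is the tedious but routine bookkeeping of the $O\big(\eta(|n_3|\wedge|n_5|)\big)$ corrections coming from $n_2,n_4$, which slightly perturb the identities $n_{23}\approx n_3$, $n_{45}\approx n_5$, $n\approx n_{135}$.
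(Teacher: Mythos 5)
Your treatment of \eqref{est:6linear-0} and of the regions where $|\Phi|\gec n_3n_5$ is sound and close in spirit to the paper's Cases [0], [1.2], [2.2.2] (your sign-based splitting $n_1\ge 0$ / $n_1<0,\,n\le 0$ / $n_1<0<n$ is a legitimate reorganization of the paper's splitting by $|n_1|$ versus $|n_5|$ and $|n|$ versus $n_{\max}$). But the proof breaks down exactly where you locate the difficulty, in the near-resonant region $n_1<0<n$ with $|n_{13}|\wedge|n_{15}|\ll n_3\wedge n_5$. There, compensating the derivative \emph{pointwise} by the Sobolev weights (your $|m_6|\,\LR{n}^s\prod\LR{n_j}^{-s}\lec (n_3\wedge n_5)^{1-2s}\lec 1$) is not enough: after the Cauchy--Schwarz reduction one must bound $\sum |m_6|^2\LR{n}^{2s}\LR{\Phi}^{-1}\prod\LR{n_j}^{-2s}$ uniformly in $n$, and the sum still runs over a two-parameter family $(n_{13},n_1)$ (equivalently $(n_3,n_5)$ with $n_1$ determined). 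Pairing $\om_1$ with $\om_3$ by Cauchy--Schwarz and handling the rest by Young, as you propose, costs a factor of roughly $\LR{n}^{1/2}$ from the $O(n_5)$ admissible values of $n_{13}$ and closes only for $s>3/4$, not for all $s>1/2$. The missing ingredient is a lattice-point count on the level sets of $\Phi$: writing $\Phi=2n_{13}n_{15}+n|n|-p^2$ with $p=n_{135}$ fixed, the set $\{\Phi=\mu\}$ is a hyperbola $(p-n_5)(p-n_3)=\mathrm{const}\neq 0$ (and an ellipse when $n_1\ge 0$), which carries only $O_\e(R^\e)$ points in a box of side $R$ by the divisor bound (resp.\ Jarn\'ik). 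This is the paper's Lemma~\ref{lem:number} and the fact \eqref{fact2}; it is what lets one sum $\LR{\Phi}^{-1}$ over the $O(N_3^2)$ attainable values of $\Phi$ at a cost of only $N_3^\e\log N_3$, and it is indispensable for reaching $s>1/2$. Your sketch contains no such number-theoretic input, so the core estimates \eqref{est:6linear-1}--\eqref{est:6linear-2} are not established at the claimed regularity.

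Two subsidiary assertions are also incorrect as stated. First, in the case $n_1<0,\ n\le 0$ you claim $\LR{n}\sim\LR{n_{\max}}$; this fails, e.g., when $n_1\approx -(n_3+n_5)$ and $n_{24}$ is small, where $|n|=O(1)$ while $n_{\max}\sim n_3$ (the conclusion for \eqref{est:6linear-2} can still be rescued there because $|\Phi|\gec n_3 n_5$ gives enough decay to sum directly, but not by the reason you give). Second, in the region where the output frequency is much smaller than $N_{\max}$ you assert $|\Phi|\gec N_{\max}$ from $|n_{13}|\ge 1$ and $|n_{15}|\sim N_{\max}$; this ignores the exact correction $\Phi=2n_{13}n_{15}+n_{24}(n+p)$, whose second term is $O(\eta|n_5|^2)$ and can cancel the first when $n_5\le n_3\lec \eta n_5^2$. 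The paper avoids this by splitting on $|n|\gtrless\eta|n_5|$ (Cases [2.2.1]--[2.2.2]), using the counting lemma in the first subcase and the lower bound $|\Phi|\ge|n_3||n_5|$ in the second.
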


These lemmas yield Proposition~\ref{prop:fundamental} as follows.

\begin{proof}[Proof of Proposition~\ref{prop:fundamental}]
\eqref{est:matome-0}:
We divide $|\hat{\hat{m}}|$ as $|m|\chf{(n_l)\in \Sc{A}}+|m|\chf{(n_l)\not \in \Sc{A}}\chf{|\Phi |\le |n_2|^2\vee |n_4|^2}$, where $m$ is one of $\shugo{m_i\,;\,1\le i\le 7}$ and $\Sc{A}$ is $\Sc{A}_1$ ($1\le i\le 5$) or $\Sc{A}_2$ ($i=6,7$).
For the first term, \eqref{est:5linear-0} and \eqref{est:6linear-0} yield the claim.
For the second term, we use \eqref{est:5linear-1} and \eqref{est:6linear-1} after multiplying by $\big( \LR{n_2}+\LR{n_4}\big) /\LR{\Phi}^{1/2}\ge 1$.

\eqref{est:matome-1}:
This immediately follows from \eqref{est:5linear-1} and \eqref{est:6linear-1}.

\eqref{est:matome-2}:
We multiply the quintilinear form by $\LR{\Phi}^{1/2}/\big( \LR{n_2}+\LR{n_4}\big)\gec 1$ and apply \eqref{est:5linear-1}, \eqref{est:6linear-1}.

\eqref{est:matome-3}:
This immediately follows from \eqref{est:5linear-2} and \eqref{est:6linear-2}.
\end{proof}

To show Lemmas~\ref{lem:1} and \ref{lem:2}, we prepare the following combinatorial tool.
\begin{lem}\label{lem:number}
Let $\e >0$, then we have
\begin{align}
\sup _{n_1^*,n_2^*,\mu ^*\in \Bo{Z}}\Big[ &\# \Shugo{(n_1,n_2)\in \Bo{Z}^2}{3(n_1-n_1^*)^2+(n_2-n_2^*)^2=\mu ^*,\,|n_1|+|n_2|\le R} \label{number1}\\[-5pt]
&+\# \Shugo{(n_1,n_2)\in \Bo{Z}^2}{(n_1-n_1^*)(n_2-n_2^*)=\mu ^*\neq 0,\,|n_1|+|n_2|\le R}\Big] \label{number2}\\
\lec _\e R^\e &\notag
\end{align}
for any $R>1$.
\end{lem}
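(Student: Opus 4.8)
The plan is to reduce both counts in Lemma~\ref{lem:number} to the classical divisor bound $\tau(N)\lec_\e N^\e$ (where $\tau$ denotes the number-of-divisors function), together with the remark that any integer point contributing to either count forces $|\mu^*|\lec R^2$.

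First I would dispose of the product term \eqref{number2}. Putting $d:=n_1-n_1^*$, a solution is completely determined by $d$, which must be a divisor of $\mu^*$; then $n_1=n_1^*+d$ and $n_2=n_2^*+\mu^*/d$. (The hypothesis $\mu^*\neq0$ is essential here: for $\mu^*=0$ the count is clearly unbounded.) Hence \eqref{number2} is bounded by the number of divisors $d$ of $\mu^*$ with $d$ in the interval $[-R-n_1^*,\,R-n_1^*]$ of length $2R$ and $\mu^*/d$ in $[-R-n_2^*,\,R-n_2^*]$. Any such $d$ and its companion satisfy $|\mu^*|=|d|\,|\mu^*/d|\lec R^2$, so the divisor bound gives at most $\tau(|\mu^*|)\lec_\e R^{2\e}$ admissible values of $d$, and relabelling $\e$ yields \eqref{number2}$\lec_\e R^\e$.

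For the quadratic-form term \eqref{number1} I would first absorb the shifts by setting $m_1:=n_1-n_1^*$, $m_2:=n_2-n_2^*$, reducing to a bound on the number of integer solutions of $3m_1^2+m_2^2=\mu^*$ lying in a box of side $\lec R$; over such a box $|\mu^*|\lec R^2$. The key algebraic input is the identity $3m_1^2+m_2^2=a^2+ab+b^2$ with $a:=m_2-m_1$, $b:=2m_1$; that is, $3m_1^2+m_2^2$ is the norm of the Eisenstein integer $m_2-m_1+2m_1\om\in\Bo{Z}[\om]$, $\om:=e^{2\pi i/3}$. Since $\Bo{Z}[\om]$ is a principal ideal domain with exactly six units, the number of its elements of a prescribed norm $N$ is at most $6\tau(N)$; therefore the solution set of $3m_1^2+m_2^2=\mu^*$ has cardinality $\lec\tau(|\mu^*|)\lec_\e R^\e$, and \eqref{number1} follows (the degenerate possibility $m_1=0$, contributing at most two solutions, is already covered). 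Adding the two bounds gives Lemma~\ref{lem:number}.

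I expect the delicate point to be the passage from "number of lattice points on the conic is $\lec_\e(\text{its value})^\e$" to a bound in terms of $R$ alone: this is immediate once one knows $|\mu^*|\lec R^2$, which is automatic whenever the shifts $n_1^*,n_2^*$ are themselves $O(R)$ — the regime in which the lemma is used in Sections~\ref{sec:nf}--\ref{sec:proof-multi}. In full generality one supplements the divisor bound with the elementary observation that a window of length $2R$ cannot contain many divisors of an integer much larger than $R^2$, since any two such divisors have $\gcd$ at most $2R$ and hence a large $\mathrm{lcm}$ dividing $\mu^*$; carrying this through is the only genuinely arithmetic part of the argument, the remainder being bookkeeping over signs and degenerate sub-cases.
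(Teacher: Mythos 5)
The central difficulty of this lemma is exactly the point your argument assumes away: the supremum is over \emph{all} $n_1^*,n_2^*,\mu^*\in\Bo{Z}$, so the conic may be centred arbitrarily far from the box $\{|n_1|+|n_2|\le R\}$, and $|\mu^*|$ is \emph{not} bounded by $R^2$. (In the applications, e.g.\ \eqref{fact1}, the quantity playing the role of $\mu^*$ involves the unrestricted frequencies $n$ and $p=n-n_{24}$, so the large-$|\mu^*|$ regime genuinely occurs and the uniformity in $n_1^*,n_2^*,\mu^*$ is what is actually being used.) Your reduction of \eqref{number2} to $\tau(|\mu^*|)\lec_\e R^{2\e}$ uses $|d|\le R$, whereas $d=n_1-n_1^*$ only lies in an interval of \emph{length} $2R$ and can itself be huge; likewise the Eisenstein-integer count for \eqref{number1} gives $\lec_\e|\mu^*|^{\e}$, which is only $\lec_\e R^{C\e}$ when $|\mu^*|\lec R^{C}$. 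So the body of your proof covers only the regime $|\mu^*|\lec R^{O(1)}$, where the divisor bound does all the work; that part is fine and matches the paper's first step.

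For the complementary regime your closing paragraph does not suffice. The assertion that ``a window of length $2R$ cannot contain many divisors of an integer much larger than $R^2$'' is false as stated (take $\mu^*$ divisible by $\mathrm{lcm}(1,\dots,\lfloor 2R\rfloor)$ and the window $[1,2R]$); the correct statement, which the paper proves, is that a window of length $2R$ contains at most \emph{two} divisors of $\mu^*$ all exceeding $|\mu^*|^{1/2}$, provided $|\mu^*|^{1/2}\gg R^3$ (after reducing to this situation by assuming $|n_1-n_1^*|\ge|n_2-n_2^*|$). Moreover, the two-divisor gcd/lcm observation you propose only yields $\mathrm{lcm}(d_1,d_2)\ge |\mu^*|/(2R)$, which is perfectly consistent with $\mathrm{lcm}(d_1,d_2)\mid\mu^*$ — no contradiction results, and indeed two such divisors can coexist ($d_1=2k$, $d_2=2k+2$, $\mu^*=2k(k+1)$). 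One needs \emph{three} divisors and the identity $\mathrm{lcm}(p_1,p_2,p_3)\gcd(p_1,p_2)\gcd(p_1,p_3)\gcd(p_2,p_3)=p_1p_2p_3\gcd(p_1,p_2,p_3)$ to force $8R^3|\mu^*|\ge|\mu^*|^{3/2}$, contradicting $|\mu^*|\gg R^6$. Finally, for \eqref{number1} you offer no large-$|\mu^*|$ argument at all; the missing ingredient is Jarn\'ik's observation that three lattice points span a triangle of area $\ge 1/2$, while any triangle inscribed in an arc of the ellipse $3x^2+y^2=\mu^*$ of diameter $O(R)$ has area $O(R^3/|\mu^*|^{1/2})$, so for $|\mu^*|\gg R^6$ such an arc carries at most two lattice points. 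Without these two ingredients the lemma is not proved.
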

\begin{proof}
Estimate for \eqref{number1} follows via well-known divisor counting argument and Jarn\'ik's geometric observation; see, e.g., \cite[Proposition~2.36]{B93-1} and \cite[Lemma~1.5]{B07}.
Estimate for \eqref{number2} is immediate if $|\mu ^*|\lec R^6$.
When $|\mu ^*|\gg R^6$, we may assume $|n_1-n_1^*|\ge |n_2-n_2^*|$, which implies $|n_1-n_1^*|\ge |\mu ^*|^{1/2}\gg R^3$.
It turns out that the number of such points $(n_1,n_2)$ is at most two; this kind of argument is also well-known, see \mbox{e.g.} \cite[Lemma~6.1]{CKSTT04}.
In fact, suppose that there exist three distinct points $(n_{1,j},n_{2,j})$, $j=1,2,3$ satisfying
\eqq{(n_{1,j}-n_1^*)(n_{2,j}-n_2^*)=\mu ^*\neq 0,\quad |n_{1,j}|+|n_{2,j}|\le R,\quad p_j:=|n_{1,j}-n_1^*|\ge |\mu ^*|^{1/2}\gg R^3,}
then for each $j$, $p_j$ is a divisor of $\mu ^*$ and confined to an interval of length $2R$.
Moreover, $p_j$, $j=1,2,3$ are mutually different. 
Then, using the identity
\eqq{\mathrm{lcm}(p_1,p_2,p_3) \gcd (p_1,p_2)\gcd (p_1,p_3)\gcd (p_2,p_3)=p_1p_2p_3\cdot \gcd (p_1,p_2,p_3),}
we deduce a contradiction
\[ 8R^3|\mu ^*|\ge |\mu ^*|^{3/2}\gg R^3|\mu ^*|.\qedhere \]
\end{proof}

Now, we are in a position to prove Lemmas~\ref{lem:1} and \ref{lem:2}.
This is done by a thorough case-by-case analysis, which reveals how we come to the elaborate definition of the sets $\Sc{A}_1,\Sc{A}_2$. 

\begin{proof}[Proof of Lemma~\ref{lem:1}]

\underline{[0] Proof of \eqref{est:5linear-0}}.

[0.1] $n_{15}n_{35}=0$.
We consider the case $n_1+n_5=0$ without loss of generality.
Noticing $s>1/2$ and using Young's inequality, we have
\eqq{\text{LHS of \eqref{est:5linear-0}}&\le \norm{\sum _{n=n_{12345}}\om _1W_2\om _3|n_4|W_4\om _5}{\ell ^2_s}+\norm{\sum _{n=n_{234}}W_2\om _3W_4}{\ell ^2_s}\sum _{n_5}|n_5|\om _1(-n_5)\om _5(n_5)\\
&\lec \norm{\om _1}{\ell ^2_s}\norm{W_2}{\ell ^2_s}\norm{\om _3}{\ell ^2_s}\norm{W_4}{\ell ^2_{s+1}}\norm{\om _5}{\ell ^2_s}+\norm{W_2}{\ell ^2_{s}}\norm{\om _3}{\ell ^2_s}\norm{W_4}{\ell ^2_{s}}\norm{\om _1}{\ell ^2_{1/2}}\norm{\om _5}{\ell ^2_{1/2}}.}

[0.2] $|n_2|\vee |n_4|\ge \eta ^2(|n_5|\wedge |n|)$.
If $|n_2|\vee |n_4|\ge \eta ^2|n_5|$, we have $|n_{45}|\lec |n_2|+|n_4|$, and the desired estimate is easily verified by Young's inequality.
If $\eta ^2|n_5|>|n_2|\vee |n_4|\ge \eta ^2|n|$, then
\eqq{\text{LHS of \eqref{est:5linear-0}}\lec \norm{\sum _{n=n_{12345}}\big( \LR{n_2}+\LR{n_4}\big) \om _1W_2\om _3W_4\LR{n_5}\om _5}{\ell ^2_{s-1}},}
and the claim follows from the Sobolev estimate \eqref{prod-est1}.

[0.3] $|n_2|\vee |n_4|<\eta ^2|n_5|$, $|n_5|<\eta (|n_1|\wedge |n_3|)$, and $|n_{24}|\ge \eta |n_{13}|$.
Replacing $|n_{45}|$ with $|n_5|$, we may assume $|n_2|\ge |n_4|$ by symmetry.
Divide $n_2,n_4,n_5$ into dyadic parts $\shugo{\LR{n_l}\sim N_l}$ and restrict $n_1$ and $n_3$ into intervals $Q_1,Q_3$ with length $N_2$.
Then, we have $\LR{n}^s|n_{45}|\lec \LR{n_1}^{s}\LR{n_3}^{s}\LR{n_5}^{1-s}$ and
\eqq{&\norm{\sum _{n=n_{12345}}|n_{45}|\om _1W_2\om _3W_4\om _5}{\ell ^2_s}\\
&\lec N_2^{1/2}\norm{\chi _{Q_1}\om _1}{\ell ^2_s}\norm{\chi _{\sim N_2}W_2}{\ell ^2_{1/2}}N_2^{1/2}\norm{\chi _{Q_3}\om _3}{\ell ^2_s}\norm{\chi _{\sim N_4}W_4}{\ell ^2_{1/2}}\norm{\chi _{\sim N_5}\om _5}{\ell ^2_{1-s}}\\
&\lec N_2^{1/2-s}N_4^{1/2-s}N_5^{1-2s}\norm{\chi _{Q_1}\om _1}{\ell ^2_s}\norm{\chi _{\sim N_2}W_2}{\ell ^2_{s+1}}\norm{\chi _{Q_3}\om _3}{\ell ^2_s}\norm{\chi _{\sim N_4}W_4}{\ell ^2_{s}}\norm{\chi _{\sim N_5}\om _5}{\ell ^2_{s}}.
}
By the almost orthogonality $|n_{13}|\lec N_2$ the summation over $Q_1,Q_3$ is performed via the Cauchy-Schwarz inequality.
We can sum up over $N_2,N_4,N_5$ since $s>1/2$, and the estimate follows.

[0.4] $|n_2|\vee |n_4|<\eta ^2|n_5|$, $|n_3|<\eta (|n_1|\wedge |n_5|)$, $|n_5|\le 2|n_1|$, and $|n_{24}|\ge \eta |n_{15}|$.
Similarly to the preceding case, divide $n_2,n_3,n_4$ into dyadic parts and $n_1,n_5$ into intervals of length $N_2$, assuming $N_2\ge N_4$.
Since $|n|\le |n_{24}|+|n_{15}|+|n_3|\lec |n_2|+|n_3|$ and thus $\LR{n}^s|n_{45}|\lec (N_2+N_3)^{1-s}\LR{n_1}^{s}\LR{n_5}^s$, we have a similar estimate as [0.3].

\smallskip
\underline{[1] Proof of \eqref{est:5linear-1}}.
Recall that for $(n_1,\dots ,n_5)\not\in \Sc{A}_1$ we have $n_{15}n_{35}\neq 0$ and $|n_2|\vee |n_4|<\eta ^2(|n_5|\wedge |n|)$, which implies $|n_{45}|\lec |n_5|$.
By symmetry we may assume $|n_1|\ge |n_3|$.
Since $n>0$ and $n_{45}<0$, it must hold that $n_{13},n_1>0$ and $n_5<0$, and particularly $n_{13}n_{15}n_{35}\neq 0$.
We also note that $|n_1|\sim n_{\max}$ in this case.

We see that the following identities hold:
\begin{equation}
\Phi =n^2-n_1^2-n_3|n_3|+n_5^2=\begin{cases}
2n_{15}n_{35}+n^2-p^2 &\text{if $n_3\ge 0$},\\
-2n_{13}n_{15}+n^2+p^2&\text{if $n_3<0$},\\
\end{cases}\label{Phi-5}
\end{equation}
where $p:=n_{135}=n-n_{24}$.
Then, the following fact is verified by Lemma~\ref{lem:number}:
\eq{fact1}{&\text{If two of $n_1,n_3,n_5$ are restricted into intervals of length $R>1$, then the number of}\\[-5pt]
&\text{possible choices of $(n_1,n_3,n_5)$ for fixed $n=n_{12345}$, $n_2$, $n_4$, $\Phi$, is at most $O_\e (R^\e )$.}}

By the Cauchy-Schwarz inequality, \eqref{est:5linear-1} is reduced to showing
\eq{claim5-1}{\sum _{n=n_{12345}}\frac{|n_5|^2\LR{n}^{2s}}{\LR{\Phi}\LR{n_1}^{2s}\LR{n_2}^{2s}\LR{n_3}^{2s}\LR{n_4}^{2s}\LR{n_5}^{2s}}\lec 1}
for $n\in \Bo{Z}$ uniformly.

[1.1] $|n_3|\ge \eta |n_5|$.
In this case we have $|n_5|^2\LR{n}^{2s}\lec \LR{n_5}^{2s}\LR{n_3}^{2(1-s)}\LR{n_1}^{2s}$.
Decomposing the summation in $n_3$ into dyadic intervals and invoking \eqref{fact1}, we have
\eqq{&\text{LHS of \eqref{claim5-1}}\\[-5pt]
&\lec \sum _{N_3\ge 1}\frac{1}{N_3^{2(2s-1)}}\sum _{n_2,n_4}\frac{1}{\LR{n_2}^{2s}\LR{n_4}^{2s}}\sum _{\mu \in \Bo{Z}}\frac{\# \Shugo{(n_1,n_3,n_5)}{n=n_{12345},\,\Phi =\mu,\, |n_3|+|n_5|\lec N_3}}{\LR{\mu}}\\
&\lec \sum _{N_3\ge 1}\frac{N_3^\e \log N_3}{N_3^{2(2s-1)}}\lec 1}
whenever $0<\e<2(2s-1)$, where we have used the fact that $\Phi$ can take at most $O(N_3^2)$ values for fixed $n,n_2,n_4$.

[1.2] $\eta |n|\le |n_3|<\eta |n_5|$.
It holds $|n_5|^2\LR{n}^{2s}\lec \LR{n_5}^{2s}\LR{n_1}^{2(1-s)}\LR{n_3}^{2s}$.
By a similar argument as [1.1], we see that
\eqq{&\text{LHS of \eqref{claim5-1}}\\[-5pt]
&\lec \sum _{N_1\ge 1}\frac{1}{N_1^{2(2s-1)}}\sum _{n_2,n_4}\frac{1}{\LR{n_2}^{2s}\LR{n_4}^{2s}}\sum _{\mu \in \Bo{Z}}\frac{\# \Shugo{(n_1,n_3,n_5)}{n=n_{12345},\,\Phi =\mu,\, |n_1|+|n_3|\lec N_1}}{\LR{\mu}}\\
&\lec \sum _{N_1\ge 1}\frac{N_1^\e \log N_1}{N_1^{2(s+1)}}\lec 1}
if $0<\e<2(2s-1)$.

[1.3] $|n_3|<\eta (|n_5|\wedge |n|)$.
In this case it holds that $n_{15}=n-n_{234}\ge (1-3\eta )n>0$.
We observe that
\eqq{\Phi &=(n_{15}+n_{234})^2+n_5^2-(n_{15}-n_5)^2-n_3|n_3|\le 2n_{15}(n_5+n_{234})+n_{234}^2+n_3^2\\
&\le -2(1-3\eta )^2n|n_5|+9\eta ^2n|n_5|+\eta ^2n|n_5|\le -|n||n_5|.}
Therefore, we have $|n_5|^2\LR{n}^{2s}\lec \LR{\Phi}\LR{n_1}^{2s}$ and
\eqq{\text{LHS of \eqref{claim5-1}}&\lec \sum _{n_2,n_3,n_4,n_5}\frac{1}{\LR{n_2}^{2s}\LR{n_3}^{2s}\LR{n_4}^{2s}\LR{n_5}^{2s}}\lec 1.}

\smallskip
\underline{[2] Proof of \eqref{est:5linear-2}}.

[2.1] $|n|\ge \eta ^2n_{max}$.
In this case the estimate is reduced to \eqref{est:5linear-1}.

[2.2] $|\Phi|\ge \eta ^3n_{\max}^2$.
We have
\eqq{\frac{\LR{n_{\max}}}{\LR{\Phi}^{1/2-\de}\LR{n}}\lec \frac{\LR{n_{\max}}^{2\de}}{\LR{n}}\lec \frac{\LR{n_1}^{2\de}\LR{n_2}^{2\de}\LR{n_3}^{2\de}\LR{n_4}^{2\de}\LR{n_5}^{2\de}}{\LR{n}^{2\de}}}
if $0<\de \le 1/2$, hence this case is also reduced to [1] with $s$ replaced by $s-2\de$ once we choose $\de>0$ such that $s-2\de >1/2$.

[2.3] $|n|<\eta ^2n_{max}$ and $|\Phi|<\eta ^3n_{\max}^2$.
Recall that we are assuming $n_{15}n_{35}\neq 0$ and $|n_2|\vee |n_4|<\eta ^2(|n_5|\wedge |n|)$, which implies $|n_{45}|\lec |n_5|$.
We may assume $|n_1|\ge |n_3|$, and thus it holds that $n,n_1,n_{13}>0$, $n_5,n_{45}<0$, $|n_1|\sim n_{\max}$.

[2.3.1] $|n_5|<\eta |n_3|$.
In this case, $|n|<\eta ^2n_{max}~(=\eta ^2|n_1|)$ implies $n_3<0$ and $|n_1|\sim |n_3|$.
Since $(n_j)\not\in \Sc{A}_1$, we also have $|n_{24}|<\eta |n_{13}|$.
Moreover, we have $n<2n_{13}$, since otherwise $0<n/2\le n-n_{13}=n_5+n_{24}\le n_5+2\eta ^2|n_5|<0$, which is a contradiction.
Hence, \eqref{Phi-5} implies that
\eqq{\Phi &=-2n_{13}n_{15}+n^2+(n-n_{24})^2\\
&\le -2n_{13}(1-\eta )n_1+2n_{13}\cdot \eta ^2n_1+(2n_{13}+\eta n_{13})(\eta ^2n_1+2\eta ^3n_1)\le -|n_{13}|n_{\max},}
which then yields that
\eqq{\frac{|n_5|^2\LR{n_{\max}}^2}{\LR{\Phi}^{2(1-\de )}}\lec \frac{\LR{n_1}^{2(1-s+\de )}\LR{n_5}^{2s}}{|n_{13}|^{2(1-\de )}}.}
If we choose $\de >0$ such that $1-s+\de \le s$, we have
\eqq{&\sum _{n=n_{12345}}\frac{|n_5|^2\LR{n_{\max}}^2}{\LR{\Phi}^{2(1-\de )}\LR{n}^{2(1-s)}\LR{n_1}^{2s}\LR{n_2}^{2s}\LR{n_3}^{2s}\LR{n_4}^{2s}\LR{n_5}^{2s}}\\
&\hx \lec \sum _{n_1,n_2,n_3,n_4}\frac{1}{|n_{13}|^{2(1-\de )}\LR{n_2}^{2s}\LR{n_3}^{2s}\LR{n_4}^{2s}}\lec 1.}
By the Cauchy-Schwarz inequality we have the claim.

[2.3.2] $|n_3|<\eta (|n_1|\wedge |n_5|)$.
Since $|n|<\eta ^2n_{max}$, we have $|n_1|, |n_5|\ge n_{\max}/2$, and then the assumption $(n_j)\not\in \Sc{A}_1$ implies $|n_{24}|<\eta |n_{15}|$.
We see that $|\Phi |\gec |n_{15}|n_{\max}$ in this case.
Indeed, from \eqref{Phi-5}, if $n_3\ge 0$ we have
\eqq{|\Phi |&=|2n_{15}n_{35}+n_{24}(2n-n_{24})|\\
&\ge 2(1-\eta )|n_{15}||n_5|-\eta |n_{15}|(4\eta ^2|n_5|+2\eta ^2|n_5|)\ge |n_{15}||n_5|\ge |n_{15}|n_{\max}/2,}
whereas if $n_3<0$ we have $|n|<|n-n_3|\le |n_{15}|+|n_{24}|\le (1+\eta )|n_{15}|$ and 
\eqq{|\Phi |&=|-2n_{13}n_{15}+n^2+(n-n_{24})^2|\\
&\ge 2(1-\eta )|n_1||n_{15}|-2\eta ^2(1+\eta )|n_1||n_{15}|-6\eta ^2(1+2\eta )|n_1||n_{15}|\ge |n_1||n_{15}|\ge |n_{15}|n_{\max}/2.}
To show the desired estimate it suffices to prove for dyadic $N_*\ge 1$ that
\eqq{&\norm{\sum _{\mat{n=n_{12345}\\ |n_{15}|\sim N_*}}\frac{|n_5|\LR{n_{\max}}}{|n_{15}|^{1-\de}n_{\max}^{1-\de}\LR{n}}\om _1(n_1)W_2(n_2)\om _3(n_3)W_4(n_4)\om _5(n_5)}{\ell ^2_s}\\
&\hx \lec N_*^{-\e}\norm{\om _1}{\ell ^2_{s}}\norm{W_2}{\ell ^2_s}\norm{\om _3}{\ell ^2_s}\norm{W_4}{\ell ^2_{s}}\norm{\om _5}{\ell ^2_s}}
for some $\e >0$ (recall that $n_{15}\neq 0$).
For each $N_*$ we may restrict the values of $n_1$ and $n_5$ onto intervals of length $N_*$ by the almost orthogonality.
Then, the left-hand side of the above is evaluated by
\eqq{N_*^{-(1-\de )}N_*^{1/2}\norm{\om _1}{\ell ^2_{1-s+\de}}\norm{W_2}{\ell ^2_s}\norm{\om _3}{\ell ^2_s}\norm{W_4}{\ell ^2_s}\norm{\om _5}{\ell ^2_s}.}
We obtain the claim by choosing $\de >0$ so that $\de <1/2$ and $1-s+\de \le s$.

[2.3.3] $\eta ^{-1}|n_5|\ge |n_3|\ge \eta (|n_1|\wedge |n_5|)$.
We observe that $|n|<\eta ^2n_{\max}$ and $|n_{24}|<2\eta ^2|n_5|$ imply $|n_{135}|<3\eta ^2n_{\max}$.
Since $n_{\max}=|n_1|\vee |n_5|$, it then holds that $|n_1|,|n_3|,|n_5|\ge (\eta /4)n_{\max}$ in this case.
Here, we see that $|n_{13}|,|n_{15}|,|n_{35}|\ge (\eta /8)n_{\max}$.
In fact, if $|n_{13}|<(\eta /8)n_{\max}$ then $|n_5|\le |n_{135}|+|n_{13}|\le (3\eta ^2+\eta /8)n_{\max}$, which contradicts $|n_{5}|\ge (\eta /4)n_{\max}$, and similarly for the others.
Now, \eqref{Phi-5} shows that
\eqq{|\Phi |\ge (\eta ^2/32)n_{\max}^2-10\eta ^4n_{\max}^2\ge (\eta ^2/64)n_{\max}^2,}
which contradicts the assumption $|\Phi |<\eta ^3n_{\max}^2$.
Therefore, this case does not occur.
\end{proof}

\begin{proof}[Proof of Lemma~\ref{lem:2}]
We first observe that $n_{23},n_{45}>0$ implies $0<\frac{n_{23}n_{45}}{n_{2345}}\sim n_{23}\wedge n_{45}$.

\underline{[0] Proof of \eqref{est:6linear-0}}.

[0.1] $n_{13}n_{15}=0$.
Since $\frac{n_{23}n_{45}}{n_{2345}}\lec (|n_2|+|n_3|)\wedge (|n_4|+|n_5|)$, a similar argument as Case [0.1] of  the proof of Lemma~\ref{lem:1} suffices.

[0.2] $|n_2|\vee |n_4|\ge \eta (|n_3|\wedge |n_5|)$.
Since $\frac{n_{23}n_{45}}{n_{2345}}\lec |n_2|\vee |n_4|$, this case is handled easily.

\smallskip
\underline{[1] Proof of \eqref{est:6linear-1}}.
Recall that for $(n_1,\dots ,n_5)\not\in \Sc{A}_2$ we have $n_{13}n_{15}\neq 0$ and $|n_2|\vee |n_4|<\eta (|n_3|\wedge |n_5|)$.
By symmetry we may assume $|n_3|\ge |n_5|$, which implies $\frac{n_{23}n_{45}}{n_{2345}}\lec |n_5|$ and $n_{\max}=|n_1|\vee |n_3|$.
Moreover, it must hold that $n_3\ge n_5>0$, and we have the following identities:
\begin{equation}
\Phi =\begin{cases}
-\frac{1}{6}\big\{ 3(2n_1+n_3-p)^2+(3n_3-p)^2\big\} +n|n|-\frac{1}{3}p^2 &\text{if $n_1\ge 0$},\\
2n_{13}n_{15}+n|n|-p^2 &\text{if $n_1<0$},\\
\end{cases}\label{Phi-6}
\end{equation}
where $p:=n_{135}=n-n_{24}$.
Then, the following fact is verified by Lemma~\ref{lem:number}:
\eq{fact2}{&\text{If two of $n_1,n_3,n_5$ are restricted into intervals of length $R>1$, then the number of}\\[-5pt]
&\text{possible choices of $(n_1,n_3,n_5)$ for fixed $n=n_{12345}$, $n_2$, $n_4$, and $\Phi$, is at most $O_\e (R^\e )$.}}
By the Cauchy-Schwarz inequality, \eqref{est:6linear-1} is again reduced to showing \eqref{claim5-1} for $n\in \Bo{Z}$ uniformly.

[1.1] $|n_1|\ge \eta |n_5|$.
Since $|n|\lec n_{\max}=|n_1|\vee |n_3|$, it holds $|n_5|^2\LR{n}^{2s}\lec \LR{n_3}^{2(1-s)}\LR{n_5}^{2s}\LR{n_1}^{2s}$ or $|n_5|^2\LR{n}^{2s}\lec \LR{n_1}^{2(1-s)}\LR{n_5}^{2s}\LR{n_3}^{2s}$.
In the former case, we decompose $n_3$ dyadically and using the fact \eqref{fact2} to obtain
\eqq{&\text{LHS of \eqref{claim5-1}}\\[-5pt]
&\lec \sum _{N_3\ge 1}\frac{1}{N_3^{2(2s-1)}}\sum _{n_2,n_4}\frac{1}{\LR{n_2}^{2s}\LR{n_4}^{2s}}\sum _{\mu \in \Bo{Z}}\frac{\# \Shugo{(n_1,n_3,n_5)}{n=n_{12345},\,\Phi =\mu ,\,|n_3|+|n_5|\lec N_3}}{\LR{\mu}}\\
&\lec \sum _{N_3\ge 1}\frac{N_3^\e \log N_3}{N_3^{2(2s-1)}}\lec 1,}
whenever $0<\e <2(2s-1)$, where we have used the fact that $\Phi$ can take at most $O(N_3^2)$ different values.
In the latter case, it suffices to make a similar argument but decomposing $n_1$ instead of $n_3$.

[1.2] $|n_1|<\eta |n_5|$.
Since $|n_{124}|<3\eta |n_5|$, we see that $n=n_3+n_5+n_{124}\ge n_3+(1-3\eta )n_5>0$, 
\eqq{|\Phi |&=\big| n^2-n_1|n_1|-n_3^2-n_5^2\big| =\big| 2n_3n_5+2n_{35}n_{124}+n_{124}^2-n_1|n_1|\big| \\
&\ge 2|n_3||n_5|-12\eta |n_3||n_5|-9\eta ^2|n_3||n_5|-\eta ^2|n_3||n_5|\ge |n_3||n_5|.}
This implies $|n_5|^2\LR{n}^{2s}\lec \LR{\Phi}\LR{n_3}^{2s}$, which immediately yields \eqref{claim5-1}.

\smallskip
\underline{[2] Proof of \eqref{est:6linear-2}}.
We assume $|n_3|\ge |n_5|$ again.
Recall that $n_{13}n_{15}\neq 0$, $|n_2|\vee |n_4|<\eta |n_5|$, and $n_3\ge n_5>0$.
By the Cauchy-Schwarz inequality it suffices to show that
\eq{claim6-2}{\sum _{n=n_{12345}}\frac{|n_5|^2\LR{n_{\max}}^2}{\LR{\Phi}^{2(1-\de )}\LR{n}^{2(1-s)}\LR{n_1}^{2s}\LR{n_2}^{2s}\LR{n_3}^{2s}\LR{n_4}^{2s}\LR{n_5}^{2s}}\lec 1}
uniformly for $n\in \Bo{Z}$.

[2.1] $|n|\ge \eta n_{\max}$.
This case is reduced to \eqref{est:6linear-1}.

[2.2] $|n|<\eta n_{\max}$.
Since $|n_{135}|\le |n|+|n_{24}|<3\eta n_{\max}$ and $n_3\ge n_5>0$, it must hold that $n_1<0$ and $|n_1|\sim |n_3|\sim n_{\max}$.
Consider two subcases separately.

[2.2.1] $|n|\ge \eta |n_5|$.
In this case we have $|n_5|^2\LR{n_{\max}}^2\lec \LR{n}^{2(1-s)}\LR{n_5}^{2s}\LR{n_1}^{2s}\LR{n_3}^{2(1-s)}$, and
\eqq{\text{LHS of \eqref{claim6-2}}\lec \sum _{n=n_{12345}}\frac{1}{\LR{\Phi}^{2(1-\de )}\LR{n_2}^{2s}\LR{n_4}^{2s}\LR{n_3}^{2(2s-1)}}.}
Hence, the claim follows from the fact \eqref{fact2} in the same way as Case [1.1].

[2.2.2] $|n|<\eta |n_5|$.
We have $|n_{13}|\ge |n_5|-|n|-|n_{24}|\ge (1-3\eta )|n_5|$ and $|n_{15}|\ge |n_3|-|n|-|n_{24}|\ge (1-3\eta )|n_3|$, and thus by \eqref{Phi-6}
\eqq{|\Phi |=\big| 2n_{13}n_{15}+n|n|-(n-n_{24})^2\big| \ge 2(1-3\eta )^2|n_3||n_5|-\eta ^2|n_3||n_5|-9\eta ^2|n_3||n_5|\ge |n_3||n_5|.}
This implies $|n_5|^2\LR{n_{\max}}^2\lec \LR{\Phi}^{2(1-\de )}\LR{n_5}^{2\de}\LR{n_3}^{2\de}$, which yields \eqref{claim6-2} if $2(s-\de )>1$.
\end{proof}


\bigskip
\section*{Acknowledgments}
The result in this article was obtained during the author's stay at the University of Chicago in 2013--2014.
He is deeply grateful to Professor Carlos E. Kenig for fruitful discussions and also to all members of the Department of Mathematics for their heartwarming hospitality.
The author has been partially supported by JSPS KAKENHI Grant-in-Aid for Young Researchers (B) \#%
JP24740086 and \#%
JP16K17626.



\bigskip

\end{document}